\documentclass[10pt, notitlepage]{article}
\usepackage{amsmath, amssymb, pgfplots,yfonts, setspace, enumitem,titlesec}
\usepackage[toc,page]{appendix}
\usepackage{hyperref}
\usepackage{pst-node}
\usepackage{tikz-cd} 
\usepackage[margin=1.5in]{geometry}
\usepackage{cite}


\def\tr{\mathop{\hbox{tr}}}

\def\id{\mathop{\hbox{id}}}

\def\im{\mathop{\hbox{im}}}

\def\Span{\mathop{\hbox{span}}}

\def\Pol{\mathop{\hbox{Pol}}}

\def\fB{\mathcal{B}}
\def\fH{\mathcal{H}}

\def\fK{\mathcal{K}}

\def\C{\mathbb{C}}

\def\H{\mathbb{H}}

\def\G{\mathbb{G}}

\usepackage{mathtools}

\makeatletter
\DeclareRobustCommand\widecheck[1]{{\mathpalette\@widecheck{#1}}}
\def\@widecheck#1#2{%
    \setbox\z@\hbox{\m@th$#1#2$}%
    \setbox\tw@\hbox{\m@th$#1%
       \widehat{%
          \vrule\@width\z@\@height\ht\z@
          \vrule\@height\z@\@width\wd\z@}$}%
    \dp\tw@-\ht\z@
    \@tempdima\ht\z@ \advance\@tempdima2\ht\tw@ \divide\@tempdima\thr@@
    \setbox\tw@\hbox{%
       \raise\@tempdima\hbox{\scalebox{1}[-1]{\lower\@tempdima\box
\tw@}}}%
    {\ooalign{\box\tw@ \cr \box\z@}}}
\makeatother
\usepackage[utf8]{inputenc}
\usepackage[english]{babel}
\usepackage{amsthm}

\theoremstyle{plain}
\newtheorem{thm}{Theorem}[section]
\newtheorem{lem}[thm]{Lemma}
\newtheorem{prop}[thm]{Proposition}
\newtheorem{cor}[thm]{Corollary}
\newtheorem{ques}[thm]{Question}

\theoremstyle{definition}
\newtheorem{defn}[thm]{Definition}
\newtheorem{rem}[thm]{Remark}
\newtheorem{eg}[thm]{Example}
\makeatletter
\newcommand*{\toccontents}{\@starttoc{toc}}
\makeatother
\titleformat{\chapter}
  {\normalfont\bfseries}{\thechapter}{1em}{}
\titlespacing*{\chapter}{0pt}{3.5ex plus 1ex minus .2ex}{2.3ex plus .2ex}
\pgfplotsset{compat=1.17}

\begin{document}

\markboth{Benjamin Anderson-Sackaney}{Ideals of $L^1$-algebras}


\title{On Ideals of $L^1$-algebras of Compact Quantum Groups}

\author{Benjamin Anderson-Sackaney\\ {\small Email: b8anders@uwaterloo.ca}\\
{\small Department of Pure Mathemmatics, University of Waterloo,}\\
{\small 200 University Avenue, Waterloo, N2L 3G1, Canada}}



\maketitle

\begin{abstract}
    We develop a notion of a non-commutative hull for a left ideal of the $L^1$-algebra of a compact quantum group $\G$. A notion of non-commutative spectral synthesis for compact quantum groups is proposed as well. It is shown that a certain Ditkin's property at infinity (which includes those $\G$ where the dual quantum group $\widehat{\G}$ has the approximation property) is equivalent to every hull having synthesis. We use this work to extend recent work of White that characterizes the weak$^*$ closed ideals of a measure algebra of a compact group to those of the measure algebra of a coamenable compact quantum group. In the sequel, we use this work to study bounded right approximate identities of certain left ideals of $L^1(\G)$ in relation to coamenability of $\G$.
\end{abstract}

\section{Introduction}
Describing the ideals of a Banach algebra is a fundamental problem. As is done with Hilbert's Nullstellensatz, the closed ideals of a semi-simple Tauberian commutative Banach algebra $A$ of a certain type (without going into details) can be distinguished by their zero sets in the Gelfand spectrum $\sigma(A)$. A semi-simple commutative Banach algebra $A$ is Tauberian if $\{a : \hat{a} ~ \text{has compact support}\}$, where $\hat{a}$ is the Gelfand transform of $a$, is dense in $A$. More precisely, every closed ideal $I$ of such $A$ has that $I = I(E) = \{a\in A : \hat{a}|_E = 0\}$ for some closed subset $E\subseteq\sigma(A)$, and $E$ is called the {\bf hull} of $I$.

This correspondence lends itself nicely to commutative Banach algebras studied in abstract harmonic analysis. The Fourier algebra $A(G)\subseteq C_0(G)$ of a locally compact group $G$ is the commutative Banach algebra of coefficient functions of the left regular representation of $G$, and naturally identifies with the predual of the group von Neumann algebra $VN(G)$ (for the basics of Fourier algebras, see \cite{E64, KL19}). Alternatively, we have that $A(G) = L^1(\widehat{G})$ where $\widehat{G}$ is the quantum group dual of $G$ (see Section $2$) where we note that if $G$ is abelian then $\widehat{G}$ is the Pontryagin dual of $G$ and $L^1(\widehat{G}) = A(G)$ has a bounded approximate identity. In general, $A(G)$ has a bounded approximate identity if and only if $G$ is amenable \cite{L68}, $\sigma(A(G)) = G$, and $A(G)$ is Tauberian. It turns out that if $G = \Gamma$ is discrete and amenable, then every closed ideal $I\subseteq A(\Gamma)$ has $I = I(E)$ where $E = \mathrm{hull}(I)\subseteq \Gamma$.

More generally, for a closed subset $E\subseteq G$, we will write
$$I(E) = \{u\in A(G) : u|_E = 0\}$$
and
$$j(E) = \{ u\in A(G) : u ~ \text{has compact support disjoint from $E$}\}.$$
The ideal $I(E)$ is always closed. Since $A(G)$ is Tauberian, for any ideal $I\subseteq A(G)$ we have $j(E)\subseteq I\subseteq I(E)$ where $E = \mathrm{hull}(I)$ \cite[Chap. X Section 1]{HR63}. The closed subset $E$ is said to be a {\bf set of synthesis} if $\overline{j(E)} = I(E)$ and so with this language, the closed ideal structure of $A(G)$ is completely characterized when every closed subset of $G$ is a set of synthesis. The locally compact groups where such a thing holds have been characterized.
\begin{thm}\cite{Lau01}\label{IdealsA(G)}
    Let $G$ be a locally compact group. Then every closed subset of $G$ is a set of synthesis if and only if $G$ is discrete and $u\in \overline{uA(G)}$ for all $u\in A(G)$.
\end{thm}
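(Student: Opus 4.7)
The plan is to establish the two directions separately. Sufficiency is a direct approximation argument, while necessity splits into showing $G$ is discrete and extracting the Ditkin-type condition $u \in \overline{uA(G)}$.

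For sufficiency, assume $G$ is discrete and that $u \in \overline{uA(G)}$ for every $u$. Given a closed $E \subseteq G$ and $u \in I(E)$, I would use the hypothesis to pick $v_n \in A(G)$ with $uv_n \to u$, and the Tauberian property of $A(G)$ to approximate each $v_n$ by a compactly supported $w_n \in A(G)$ (with $\|v_n - w_n\|$ shrinking rapidly). Then $uw_n$ has finite support (compact equals finite in discrete $G$) and vanishes pointwise on $E$ because $u$ does; crucially, since every subset of a discrete group is open, this makes the actual support of $uw_n$ disjoint from $E$, so $uw_n \in j(E)$. The estimate $\|u - uw_n\| \leq \|u - uv_n\| + \|u\|\,\|v_n - w_n\|$ then yields $u \in \overline{j(E)}$. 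Without discreteness, $\supp(uw_n)$ could still meet $E$ in its closure even when $uw_n$ vanishes pointwise on $E$.

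For the necessity of $u \in \overline{uA(G)}$, fix $u$ and apply synthesis to $E := Z(u) = \{g : u(g)=0\}$. Since $u \in I(Z(u))$, synthesis gives $u \in \overline{j(Z(u))}$, so it suffices to show $j(Z(u)) \subseteq \overline{uA(G)}$. For such a $w$, the compact set $K := \supp w$ is disjoint from $Z(u)$, so $u$ is bounded below in modulus on $K$; a standard local-inversion result for Fourier algebras (regularity of $A(G)$ combined with Herz's restriction theorem giving enough of $C(K)$) produces $v \in A(G)$ with $uv \equiv 1$ on $K$, whence $w = u(vw) \in uA(G)$. For the necessity of discreteness I argue contrapositively: if $G$ is non-discrete, it contains a closed non-discrete abelian subgroup $H$—the identity component if it is non-trivial, otherwise, by van Dantzig, an infinite compact open subgroup that is profinite abelian and contains some $\Z_p$. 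Malliavin's classical non-synthesis theorem in $A(H)$ supplies a closed $E \subseteq H$ and $v \in I_H(E) \setminus \overline{j_H(E)}$. By Herz's restriction theorem $A(G)|_H = A(H)$, I lift $v$ to $u \in A(G)$ with $u|_H = v$; since restriction continuously sends $j_G(E)$ into $j_H(E)$, having $u \in \overline{j_G(E)}$ would force $v \in \overline{j_H(E)}$, a contradiction. Hence $E$ fails synthesis in $A(G)$.

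The main obstacles are (i) the local-invertibility step in the Ditkin-at-infinity argument, which is really a Wiener--Tauberian statement for $A(G)$ on compacta and would be cited from the Fourier-algebra literature rather than reproven; and (ii) the selection of a closed non-discrete abelian subgroup and the correct form of Malliavin's theorem to invoke—both folklore, but requiring care especially when $G$ is a non-Lie totally disconnected group.
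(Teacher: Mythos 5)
The paper offers no proof of this theorem --- it is quoted verbatim from \cite{Lau01} --- so your argument can only be measured against the proof in that reference, which it follows in outline. Your sufficiency direction and your derivation of the Ditkin condition $u\in\overline{uA(G)}$ are essentially the standard arguments and are sound: for sufficiency, the observation that in a discrete group pointwise vanishing on $E$ already forces the closed support of $uw_n$ to miss $E$ is exactly what makes $uw_n\in j(E)$ work; for the Ditkin condition, applying synthesis to $Z(u)$ and locally inverting $u$ on the compact support of an element of $j(Z(u))$ is correct, although the local inversion needs only regularity of $A(G)$ together with the standard local-invertibility lemma for regular semisimple commutative Banach algebras --- Herz's restriction theorem plays no role in that step.

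The genuine gap is in the necessity of discreteness. You need a closed \emph{non-discrete abelian} subgroup $H$ on which to invoke Malliavin, but neither of the subgroups you name is abelian in general. The identity component of a non-discrete group can be any non-trivial connected group (e.g.\ $SO(3)$), and the compact open subgroup supplied by van Dantzig is an arbitrary infinite profinite group; moreover an infinite profinite abelian group need not contain a copy of $\Z_p$ (consider an infinite product of copies of $\Z/2\Z$), though that last point is harmless since Malliavin requires only non-discreteness of $H$. Repairing this requires real work: in the connected case one must pass to the closure of a one-parameter subgroup or to a torus inside a compact connected piece, and in the totally disconnected case the existence of an infinite closed abelian subgroup of an infinite profinite group is a deep theorem, resting on Zelmanov's solution of the restricted Burnside problem together with the Hall--Kulatilaka theorem in the torsion case. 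Your transfer step --- lifting a non-synthetic pair $(E,v)$ from $H$ to $G$ via Herz's restriction theorem and noting that restriction carries $\overline{j_G(E)}$ into $\overline{j_H(E)}$ --- is the correct injection-theorem argument, so once the existence of such an $H$ is properly established the contrapositive does go through.
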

Whenever $u\in \overline{uA(G)}$ for all $u\in A(G)$, we say $G$ has {\bf Ditkin's property at infinity} or {\bf property $D_\infty$}. This property holds for a broad range of groups, which clearly includes all of those which admit an approximate identity, but is poorly understood. Indeed, there are no known examples of locally compact groups without property $D_\infty$ (see Section $3.3$ for more on property $D_\infty$ and its quantization). On the other hand, we understand the closed ideals of $A(G)$ for many examples of discrete groups in the literature (which includes all discrete groups with the approximation property (see Section $2.4$).

Because of Schur's lemma, the Gelfand spectrum of a commutative Banach algebra $A$ is the set of irreducible representations $A$. So, it seems natural to try to use irreducible representations to try to build a ``quantum hull'' for a left ideal of a non-commutative Banach algebra to glean information on its structure. Such a thing was achieved for group algebras of compact groups. Let us fix a compact group $G$. Recall that for a unitary representation $\pi : G\to\fB(\fH_\pi)$, the corresponding $L^1$-representation is given by $\pi(f) = \int_G f(t)\pi(t)\,dt$ for $f\in L^1(G)$. The closed left ideals have a representation theoretic description as follows.
\begin{thm}\cite{HR63}\label{strClosedIdealsCpct}
    The closed left ideals of $L^1(G)$ are of the form
    $$I(E) = \{ f\in L^1(G) : \pi(f)(E_\pi) = 0, ~ \pi\in Irr(G)\}$$
    where $E = (E_\pi)_{\pi\in Irr(G)}$ for subspaces $E_\pi\subseteq \fH_\pi$.
\end{thm}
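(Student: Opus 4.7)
The plan is to reduce, via Peter--Weyl theory, to the family of finite-dimensional quotients $\pi(L^1(G)) = \fB(\fH_\pi)$ and then reassemble. For any family $E = (E_\pi)_\pi$ the subspace $I(E)$ is visibly a norm-closed left ideal, so only the converse requires work. Given a closed left ideal $I\subseteq L^1(G)$, I would set $E_\pi := \bigcap_{f \in I}\ker \pi(f)\subseteq \fH_\pi$; the inclusion $I\subseteq I(E)$ is tautological, so the task is to prove the reverse inclusion.

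Because $G$ is compact, each $\pi \in \mathrm{Irr}(G)$ is finite-dimensional and the integrated form $\pi \colon L^1(G) \to \fB(\fH_\pi)$ is a surjective $*$-algebra homomorphism: its image is a $*$-subalgebra containing $\pi(G)$, and by irreducibility and the double commutant theorem this is all of $M_{\dim \pi}(\C)$. Since $I$ is a left ideal, $\pi(I)$ is a left ideal of $M_{\dim \pi}(\C)$; left ideals of a matrix algebra are exactly the subspaces $J_V := \{T : T|_V = 0\}$ indexed by subspaces $V$, and unwinding definitions gives $\pi(I) = J_{E_\pi}$.

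Next I would introduce the minimal central projection $p_\pi = \dim(\pi)\overline{\chi_\pi} \in L^1(G)$, characterized by $\pi'(p_\pi) = \delta_{\pi, \pi'} I_{\fH_{\pi'}}$, and its image $A_\pi := p_\pi * L^1(G)$, the space of $\pi$-matrix coefficients. The restriction $\pi|_{A_\pi} \colon A_\pi \to \fB(\fH_\pi)$ is a linear isomorphism (surjective because $\pi(p_\pi) = I$, injective by the dimension count $\dim A_\pi = (\dim \pi)^2$). For $f \in I(E)$ we have $\pi(f) \in J_{E_\pi} = \pi(I)$, so I pick $g \in I$ with $\pi(g) = \pi(f)$; then $p_\pi * g \in I$ because $I$ is a left ideal, $p_\pi * g \in A_\pi$, and $\pi(p_\pi * g) = \pi(f) = \pi(p_\pi * f)$, so injectivity on $A_\pi$ forces $p_\pi * f = p_\pi * g \in I$. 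Every $\pi$-isotypic component of $f$ therefore belongs to $I$.

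To finish, I would fix a bounded approximate identity $(e_\alpha)$ in $L^1(G)$ whose members are central trigonometric polynomials --- finite linear combinations of characters $\chi_\pi$ --- which exists on any compact group. Then $e_\alpha * f \to f$ in $L^1$, each $e_\alpha * f$ is a finite sum of scalar multiples of terms $p_\pi * f \in I$, and closedness of $I$ yields $f \in I$. The first two steps are essentially finite-dimensional linear algebra once surjectivity of $\pi$ is in hand; the one genuinely analytic ingredient, and the only place where compactness of $G$ is indispensable beyond finite-dimensionality of the $\pi$, is the central trigonometric-polynomial approximate identity, which is what converts the algebraic statement ``each $p_\pi * f$ lies in $I$'' into the norm statement $f \in I$.
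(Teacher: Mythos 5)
Your proof is correct, and it follows essentially the same strategy that the paper attributes to \cite{HR63} and reuses for the quantum analogue in Proposition \ref{idealsinL^1} and Theorem \ref{StrcClosedLeftIdeals}: identify $\pi(I)$ as a left ideal of a matrix algebra, use the minimal central projections $p_\pi$ to show the isotypic truncations of any $f\in I(E)$ already lie in $I$, and then close up with a central trigonometric-polynomial approximate identity (the step corresponding to the paper's Ditkin-type condition $f\in\overline{L^1*f}$). No gaps; the only classical input you invoke beyond Peter--Weyl is the existence of that central approximate identity, which is exactly the ingredient the paper isolates.
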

The symbol $Irr(G)$ denotes the irreducible $*$-representations on $L^1(G)$.

In the general scheme of locally compact quantum groups (LCQGs), the compact and discrete quantum groups are dual to one another (cf. \cite{Wor} and \cite{R08}). So, it is reasonable to attempt to unify Theorems \ref{strClosedIdealsCpct} and \ref{IdealsA(G)} at the level of compact quantum groups (CQGs). Using the analogies found between the representation theory of compact quantum groups (CQGs) in general and compact groups (cf. \cite{Wor}), we formulate notions of hull and synthesis. In particular, we have that the sequences $E = (E_\pi)_{\pi\in Irr(\G)}$, where each $E_\pi$ is a subspace of the Hilbert space $\fH_\pi$ where $L^1(\G)$ acts by $\pi$ and $Irr(\G)$ are the irreducible $*$-representations on $L^1(\G)$, are hulls of left ideals of $L^1(\G)$ and we say $E$ is a set of synthesis if $\overline{j(E)} = I(E)$ where $j(E)$ and $I(E)$ are defined in Section $3.1$.  More precisely, let $\G$ be a CQG and $I\subseteq L^1(\G)$ a left ideal. We prove that there exists a sequence $E = (E_\pi)_{\pi\in Irr(\G)}$ such that:
\begin{itemize}
    \item $j(E)\subseteq I\subseteq I(E)$ (Proposition \ref{idealsinL^1});
    \item every $E = (E_\pi)_{\pi\in Irr(\G)}$ is a set of synthesis if and only if $f\in \overline{L^1(\G)*f}$ for every $f\in L^1(\G)$ (Theorem \ref{StrcClosedLeftIdeals}).
\end{itemize}
In particular, for every CQG $\G$ where $\widehat{\G}$ satisfies property (left) $D_\infty$, which is the property where $f\in \overline{L^1(\G)*f}$ for every $f\in L^1(\G)$, we establish a complete description of the closed left ideals of $L^1(\G)$. This includes many examples of CQGs from the literature, including $SU_q(2)$, free unitary quantum groups, free orthogonal quantum groups, etc (see Section $3.3$).

A coamenable CQG is a CQG $\G$ where $L^1(\G)$ admits a bounded approximate identity, and are CQGs such that their duals have property left $D_\infty$. In this case, our above structural result for closed left ideals of $L^1(\G)$ can be used to characterize the weak$^*$ closed left ideals of their quantum measure algebras. The measure algebra $M(G)$ of a locally compact group, is a unital Banach algebra that identifies naturally with the dual space of the continuous functions vanishing at infinity $C_0(G)$ via integration. Then, here, we have that $\overline{L^1(G)}^{wk*} = M(G)$ with respect to the $\sigma(M(G), C_0(G))$ topology. Switching perspectives to the duals of locally compact amenable $G$, the Fourier-Stieltjes algebra $B(G)$, which is the subalgebra of the bounded continuous functions $C_b(G)$ consisting of coefficient functions of continuous unitary representations, is a unital commutative Banach algebra that identifies with the dual space of the group $C^*$-algebra $C^*(G)$. Here, we have the analogous fact that $\overline{A(G)}^{wk*} = B(G)$ with respect to the $\sigma(B(G),C^*(G))$ topology. For coamenable CQGs, the quantum measure algebra $M(\G)$ is the dual space of the $C^*$-algebra $C(\G)$ of $\G$, and likewise satisfies $\overline{L^1(\G)}^{wk*} = M(\G)$. It is a generalization of both the measure algebra of a compact group and the Fourier-Stieltjes algebra of an amenable discrete group.

Obtaining information about the structure of a quantum measure algebra is generally very difficult. For instance, the Gelfand spectrum $\sigma(B(G))$ is often much larger than $G$, as exhibited by the Wiener-Pitt phenomenon for non-compact abelian groups (see \cite{W75, W72}). We can learn about some of the structure of a quantum measure algebra, however, if the natural weak$^*$ topology is taken into account. Recently, White \cite{Wh18} described the weak$^*$ closed left ideals of the measure algebra $M(G)$ of a compact group $G$ by exploiting weak$^*$ approximation by elements in $L^1(G)$. In order to get a description similar
to the one in Theorem \ref{wk*IdealsMeasures}, we show Whites techniques extend directly to coamenable CQGs. Indeed, we prove the following.
\begin{itemize}
    \item Let $\G$ be a coamenable CQG. Every weak$^*$ closed left ideal $I\subseteq M(\G)$ is of the form
    $$I = I^u(E) = \{\mu\in M(\G) : \pi(\mu)(E_\pi) = 0 ~ \text{for all} ~\pi\in Irr(\G)\}$$
    where $E = (E_\pi)_{\pi\in Irr(\G)}$ is a closed quantum subset of $\widehat{\G}$ and $\pi : M(\G)\to \fB(\fH_\pi)$ is the natural extension of $\pi\in Irr(\G)$ to $M(\G)$.
\end{itemize}
Another fundamental problem in Banach algebras is describing their idempotents. The idempotents of $B(G)$ for locally compact $G$ were completely characterized as the characteristic functions on sets in the sigma ring generated by clopen subgroups of $G$ by the Cohen-Host idempotent theorem (see \cite{KL19}). Other than in the abelian case, the classification of idempotents in $M(G)$ remains an open problem, even for $SO(3)$. On the other hand, the idempotent states in $B(G)$ are exactly the characteristic functions on clopen subgroups of $G$ (see \cite{IS05}) and the idempotent states in $M(G)$ are exactly the Haar measures coming from compact subgroups of $G$ (see \cite{K58}). The idempotent states in $M(\G)$, where $\G  = U_q(2), SU_q(2)$, and $SO_q(3)$ were completely classified in \cite{FST13}.

For a CQG $\G$, an intimately related problem is the determination of the closed left ideals in $L^1(\G)$ that admit bounded right approximate identities. When $\G$ is coamenable, there is a one-to-one correspondence between idempotents in $M(\G)$ and closed left ideals of $L^1(\G)$ that have bounded right approximate identities (see \cite{NM19}). For a (possibly non-amenable) discrete group $\Gamma$, the ideals in $A(\Gamma)$ with bounded approximate identities were completely characterized by Forrest in \cite{For}. A more specific result in \cite{For} is that an ideal of the form $I(\Lambda)$ for some subgroup $\Lambda$ has a bounded approximate identity if and only if $\Gamma$ is amenable. In other words, if $\Gamma$ is amenable, then every such $I(\Lambda)$ has a bounded approximate identity and when $\Gamma$ is non-amenable, no such $I(\Lambda)$ has a bounded approximate identity. We point out also that for any $s\in \Gamma$, $I(s\Lambda)$ has a bounded approximate identity if and only if $I(\Lambda)$ does, and thus this characterization easily applies to cosets of subgroups as well.

A compact quasi-subgroup is a von Neumann subalgebra of $L^\infty(\G)$ that corresponds to an idempotent state in the universal measure algebra $M^u(\G)$ in a sense that generalizes the identifications in the above paragraph between subgroups of discrete / compact groups (denoted $\Gamma$ / $G$) and idempotent states in $B(\Gamma)$ / $M(G)$ respectively (see Section $4.1$). For example, if $\Gamma$ is a discrete group, the compact quasi-subgroups are the subalgebras $VN(\Lambda)\subseteq VN(\Gamma)$ where $\Lambda$ is a subgroup of $\Gamma$. Then
$$I(\Lambda) = VN(\Lambda)_\perp = \{u\in A(\Gamma) : u(x) = 0 ~ \text{for all} ~ x\in VN(\Lambda)\}.$$
We make progress towards generalizing Forrest's result by proving the following.
\begin{itemize}
    \item Let $\G$ be a CQG and $N \subseteq L^\infty(\G)$ be a compact quasi-subgroup with associated idempotent state $\omega\in M^r(\G)$, where $M^r(\G)$ is the reduced measure algebra. Then $J^1(N) := N_\perp$ has a bounded right approximate identity if and only if $\G$ is coamenable.
\end{itemize}
We point out that $H$ is amenable if and only if $1_H\in B_r(G)$, where $B_r(G)$ is the reduced Fourier-Stieltjes algebra. So, our result is a generalization of Forrest's result applied to amenable subgroups of $G$.

Section $2$ will comprise the preliminaries for locally compact quantum groups where we will in particular recall the theory behind  closed quantum subgroups and more generally, invariant subspaces.

In Section $3$, we will develop the notion of a hull $E$ of a closed left ideal $I\subseteq L^1(\G)$ and then will classify the compact quantum groups such that $\overline{j(E)} = I = I(E)$, for each hull $E$, in terms of Ditkin's property at infinity (or property left $D_\infty$, a property which has recently achieved a new characterization \cite{A20}), (see Theorem \ref{StrcClosedLeftIdeals}). In particular, we can describe the closed left ideals of compact quantum groups whose dual has the approximation property. Then we will show White's techniques \cite{Wh18} for classifying the weak* closed left ideals of the measure algebra of a compact group extend to the setting of coamenable compact quantum groups (see Theorem \ref{wk*IdealsMeasures}). We will conclude the section with a brief discussion of property left $D_\infty$ and provide examples of CQGs which are weakly amenable and consequently have property left $D_\infty$.

Finally, in Section $4$ we study the closed left ideals of $L^1(\G)$ which admit a brai, with special emphasis on the preannihilator space $J^1(N)$ of a compact quasi--subgroup $N$ (the natural quantum analogue of a closed subgroup of a compact group). We also study the associated weak$^*$ closed left ideal $J^u(N)$ in $M^u(\G)$ and in a certain case, show $J^u(N) = \overline{J^1(N)}^{wk*}$ if and only if $\G$ is coamenable if and only if $J^1(N)$ admits a bounded right approximate identity (see Theorems \ref{Wk*IdealsBrais} and \ref{CoamenUniv}, and Corollary \ref{CoamenBAIIdeal}). We conclude the section by showing whenever $N\neq X = Nx$ for $x\in Gr(\G)$, that $X_\perp$ possesses a bounded approximate identity if and only if $\G$ is coamenable (see Theorem \ref{CoamenIdeal}). In this context, we think of $X$ as being a ``quantum coset'' of the compact quasi--subgroup $N$. We end by illustrating these last results on discrete crossed products equipped with the structure of a compact quantum group.
\section{Preliminaries}
\subsection{Locally Compact Quantum Groups}
The notion of a quantum group we will be using in this paper is the one developed by Kusterman and Vaes \cite{KV00}. We use the references \cite{KV03} and \cite{Tim}. A {\bf locally compact quantum group} (LCQG) $\G$ is a quadruple $(L^\infty(\G), \Delta_\G, \psi_L, \psi_R)$ where $L^\infty(\G)$ is a von Neumann algebra; $\Delta_\G : L^\infty(\G) \to L^\infty(\G)\overline{\otimes}L^\infty(\G)$ an injective normal unital $*$--homomorphism satisfying $(\Delta_\G\otimes\id)\Delta_\G = (\id\otimes\Delta_\G)\Delta_\G$ (coassociativity); and $\psi_L$ and $\psi_R$ are normal semifinite faithful weights on $L^\infty(\G)$ satisfying
$$\psi_L(f\otimes \id)\Delta_\G(x) =f(1)\psi_L(x), ~ f\in L^\infty(\G)_*, x\in L^\infty(\G)_{\psi_L} ~ \text{(left invariance)}$$
and
$$\psi_R(\id\otimes f)\Delta_\G(x) = f(1)\psi_R(x), ~f\in L^\infty(\G)_*, x\in L^\infty(\G)_{\psi_R} ~ \text{(right invariance)},$$
where $L^\infty(\G)_{\psi_L}$ and $L^\infty(\G)_{\psi_R}$ are the set of integrable elements of $L^\infty(\G)$ with respect to $\psi_L$ and $\psi_R$ respectively. We call $\Delta_\G$ the {\bf co--product} and $\psi_L$ and $\psi_R$ the {\bf left and right Haar weights} respectively, of $\G$.

Using $\psi_L$, we can build a GNS Hilbert space $L^2(\G)$ with representation $L^\infty(\G)\subseteq \fB(L^2(\G))$. There exists a unitary $W_\G\in L^\infty(\G)\overline{\otimes}\fB(L^2(\G))$ such that $\Delta_\G(x) = W_\G^*(1\otimes x)W_\G$. The unitaries $W_\G$ are known as the {\bf left fundamental unitaries} of $\G$. The predual $L^1(\G) := L^\infty(\G)_*$ is a Banach algebra with respect to the product $f\times g\mapsto f*g:= (f\otimes g)\Delta_\G$ known as {\bf convolution}. This naturally provides us with left and right dual actions on $L^\infty(\G)$, realized by the equations
$$f*x = (\id\otimes f)\Delta_\G(x) ~ \text{and} ~ x*f = (f\otimes\id)\Delta_\G(x).$$
Unfortunately, $L^1(\G)$ is not generally a $*$--algebra. There is, however, a dense $*$--subalgebra we can build. The {\bf antipode} $S_\G : D(S)\to L^\infty(\G)$ is an unbounded linear anti--automorphism, (where $D(S)$ denotes the weak$^*$ dense domain of $S$), satisfying the identity $(\id\otimes S)(W_\G) = W_\G^*$. Then, we define
$$L^1_\#(\G) := \{ f\in L^1(\G) : ~\text{there exists} ~ g\in L^1(\G) ~\text{such that}~ g(x) = \overline{f(S_\G(x))}, ~x\in D(S)\}$$
is an involutive algebra that is dense in $L^1(\G)$ with involution $f\mapsto \overline{f\circ S}$, (see \cite{K01}). Note that the {\bf unitary antipode} $R_\G$ is the unitary in the polar decomposition of $S_\G$.

A {\bf co--representation operator} is an element $U\in L^\infty(\G)\overline{\otimes}\fB(\fH)$, for some Hilbert space $\fH$, such that $(\Delta_\G\otimes\id)(U) = U_{13}U_{23}$ where $U_{23} = 1\otimes U$ and $U_{13} = (\Sigma\otimes\id)U_{23}(\Sigma\otimes\id)$, and $\Sigma$ is the flip map. Co--representation operators $U^\pi$ are in one--to--one correspondence with representations $\pi : L^1(\G)\to \fB(\fH_\pi)$ with which the relationship is realized by setting $\pi(f) = (f\otimes\id)(U^\pi)$. The unitary co--representation operators correspond to representations that restrict to a $*$--representation on $L^1_\#(\G)$. We will simply refer to such representations as $*$ --representations.

Representations $\pi$ and $\rho$ are unitarily equivalent if there exists a unitary $U: \fH_\rho\to \fH_\pi$ such that
$$(1\otimes U^*)U^\pi(1\otimes U) = U^\rho.$$
Whenever we have a representation $\pi$, we are choosing a representative from the equivalence class of unitarily equivalence representations. We call a co--representation operator $U^\pi$ {\bf irreducible} if the corresponding representation $\pi$ is irreducible. We will use the notation $Irr(\G)$ to denote a family of irreducible $*$--representations on $L^1(\G)$, each chosen from distinct equivalence classes of $*$-representations with respect to unitary equivalence.

The fundamental unitary $W_\G$ is a unitary co--representation operator and the corresponding $*$--representation $\lambda_\G$ is called the {\bf left regular representation} of $\G$.

The von Neumann algebra $\overline{\lambda_\G(L^1_\#(\G))}^{wk*}$ has the structure of a LCQG, which in particular, has a co--product unitarily implemented by $W_{\widehat{\G}} = \Sigma W_\G^* \Sigma$. The underlying LCQG, $\widehat{\G}$, is called the {\bf dual LCQG} of $\G$. It turns out that $L^\infty(\G) = \overline{\lambda_{\widehat{\G}}(L^1_\#(\widehat{\G}))}^{wk*}$, which we view as being Pontryagin duality at the level of LCQGs.

There is also $C^*$--algebraic framework underlying the theory of LCQGs. The {\bf reduced} $C^*$--algebra is $C^r_0(\G) := \overline{\lambda_{\widehat{\G}}(L^1_\#(\widehat{\G}))}^{||\cdot||_r}$. The map $a\mapsto \Delta^r(a) := W_\G^*(1\otimes a)W_\G$ defines a non--degenerate $*$--homomorphism $C^r_0(\G)\to M(C^r_0(\G)\otimes_{min} C^r_0(\G))$ satisfying coassociativity and is known as the co--product of $C^r_0(\G)$. By building a universal $*$--representation $\varpi_{\widehat{\G}}$, we can build the {\bf universal} $C^*$--algebra $C^u_0(\G) := \overline{\varpi_{\widehat{\G}}(L^1_\#(\G))}^{||\cdot||_u}$ which also comes equipped with a co--product $\Delta^u_\G$. The {\bf counit} is a character $\epsilon^u_\G : C^u_0(\G)\to\C$ satisfying $(\id\otimes\epsilon_\G^u)\Delta^u_\G = \id = (\epsilon_\G^u\otimes\id)\Delta^u_\G$. There is a surjective $*$--homomorphism $\Gamma_\G : C^u_0(\G)\to C^r_0(\G)$ such that $(\Gamma_\G\otimes\Gamma_\G)\Delta^u_\G = \Delta^r_\G\circ\Gamma_\G$ which is known as the {\bf reducing morphism}.

The dual $M^u(\G) := C^u_0(\G)^*$ is a unital Banach algebra with respect to the product $\mu\times\nu\mapsto \mu*\nu:= (\mu\otimes\nu)\Delta^u$, which again, is called {\bf convolution}, and unit $\epsilon^u_\G$. We call $M^u(\G)$ the {\bf universal measure algebra} of $\G$. Similarly $M^r(\G) := C^r_0(\G)^*$ is Banach algebra known as the {\bf reduced measure algebra}. We remark that $M^r(\G) = \overline{L^1(\G)}^{wk*}$ and the adjoint $\Gamma_\G^* : M^r(\G)\to M^u(\G)$ gives us a completely isometric embedding, and furthermore, $M^u(\G)$ contains $L^1(\G)$ as an ideal through this embedding. Whenever $M^r(\G)$ turns out to be unital (so that $C^r_0(\G)$ admits a ``reduced'' counit), we denote this unit by $\epsilon^r_\G$.
\begin{rem}
    The examples where $L^\infty(\G)$ is commutative are the locally compact groups. We call the dual of a locally compact group a {\bf locally compact co--group} and we use the notation $C^r_0(\widehat{G}) = C^*_\lambda(G)$, $C^u_0(\widehat{G}) = C^*(G)$, $L^1(\widehat{G}) = A(G)$, $C^u_0(\widehat{G})^* = B(G)$, and $L^\infty(\widehat{G}) = VN(G)$, i.e, $\widehat{G} = (VN(G), \Delta_{\widehat{G}}, \varphi)$ where $\Delta_{\widehat{G}}(\lambda_G(s)) = \lambda_G(s)\otimes\lambda_G(s)$ and $\varphi = \psi_L = \psi_R$ is the Plancherel weight \cite[Chapter IV, Section 3]{Tak03}. The Banach algebra $A(G)$ is called the Fourier algebra and $B(G)$ the Fourier--Stieltjes algebra. The locally compact co--groups comprise the examples where $L^1(\G)$ is commutative, i.e., $L^\infty(\G)$ is cocommutative.
\end{rem}
\subsection{Compact Quantum Groups and Fourier Analysis}
The {\bf compact quantum groups (CQGs)} are the LCQGs $\G$ such that $L^1(\widehat{\G})$ is unital, meaning that $C^u_0(\G)$ and $C^r_0(\G)$ are both unital (cf. \cite{Wor} and \cite{R08}). In this case, we follow the custom of writing $C^u(\G)$ and $C^r(\G)$ for the universal and reduced $C^*$--algebras respectively instead. It follows that $\psi_L = \psi_R = h_\G\in L^1(\G)$ is a state called the {\bf Haar state} of $\G$, the irreducible representations are finite dimensional, and every $*$--representation decomposes into a direct sum of irreducibles. Given an irreducible representation $\pi$, we will write $U^\pi = [u_{i,j}^\pi]$ with respect to an orthonormal basis (ONB) $\{e_i^\pi\}$ of $\fH_\pi$. Then,
$$\Pol(\G) := \Span\{ u_{i,j}^\pi : 1\leq i,j\leq n_\pi, \pi\in Irr(\G)\}$$
is a $*$--algebra that identifies with a norm dense $*$--subalgebra of $C^u(\G)$ and $C^r(\G)$, and a weak$^*$ dense $*$--subalgebra of $L^\infty(\G)$. It is actually a Hopf $*$--algebra with co--product $\Delta_\G := \Delta_\G|_{\Pol(\G)} \Pol(\G)\to \Pol(\G)\otimes \Pol(\G)$, counit $\epsilon_\G := \epsilon^u_\G|_{\Pol(\G)}$, and antipode $S_\G|_{L^\infty(\G)}$, which satisfy
\begin{align*}
    \Delta_\G(u_{i,j}^\pi) = \sum^{n_\pi}_{t=1}u_{i,t}^\pi\otimes u_{t,j}^\pi, ~ \epsilon_\G(u_{i,j}^\pi) = \delta_{i,j}, ~ S_\G(u_{i,j}^\pi) = (u_{j,i}^\pi)^*
\end{align*}
Vital to the theory of CQGs is Fourier analysis, which is essentially analysis of Fourier transform, which is the contractive embedding $L^1(\G)\to C^r_0(\widehat{\G})$ induced by the left regular representation. So, a discussion about Fourier analysis involves a discussion about the duals of CQGs. We recommend \cite[Section 2]{Wang16} as reference on the following discussion.

A {\bf discrete quantum group (DQG)} is the Pontryagin dual of a CQG. We briefly outline how this works here. It turns out we have the decomposition $\lambda_\G = \oplus_{\pi\in Irr(\G)}\pi = \varpi_\G$ (cf.\cite{Wor}). From this, we have the algebra decompositions
\begin{align*}
    c_0(\widehat{\G}) &:= \bigoplus_{\pi\in Irr(\G)}^{c_0}M_{n_\pi} = C^r_0(\widehat{\G}) = C^u_0(\widehat{\G})
    \\
    \ell^\infty(\widehat{\G}) &:= \bigoplus_{\pi\in Irr(\G)}^{\ell^\infty} M_{n_\pi} = L^\infty(\widehat{\G}).
\end{align*}
In particular, the Fourier transform is the map
$$f\mapsto \lambda_\G(f) = \oplus_{\pi\in Irr(\G)}\pi(f) \in c_0(\widehat{\G})$$
contractively embedding $L^1(\G)$ into $c_0(\widehat{\G})$ as a dense subspace. The left and right Haar weights on $\widehat{\G}$ are realized as the direct sums:
\begin{align*}
    \psi_L = \bigoplus_{\pi\in Irr(\G)}\tr(F_\pi)\tr(F_\pi\cdot), ~ \psi_R = \bigoplus_{\pi\in Irr(\G)}\tr(F_\pi)\tr(F_\pi^{-1}\cdot)
\end{align*}
for some positive, invertible matrices $F_\pi\in M_{n_\pi}$, uniquely determined with the normalization $\tr(F_\pi) = \tr(F_\pi^{-1})>0$. For each $\pi\in Irr(\G)$, an ONB may be chosen so that $F_\pi$ is diagonal (see \cite{Tim} for general information and \cite{D10} for why they can be taken to be diagonal).

We also have a linear contraction $L^\infty(\G)\to L^1(\G)$ given by the map $x\mapsto \widehat{x} := h_\G\cdot x$, where $(h_\G\cdot x)(y) = h(xy)$. Then, if we let $e_{i,j}^\pi\in M_{n_\pi}$ be the matrix unit with $1$ in the $(i,j)$ entry,
$$e_{i,j}^\pi = \sum_{k=1}^{n_\pi}\frac{1}{\tr(F_\pi)}(F_\pi)_{i,k}^{-1}\widehat{(u_{k,j}^\pi)^*}.$$
In particular, $\lambda_\G(\widehat{\Pol(\G)}) = \bigoplus_{\pi\in Irr(\G)} M_{n_\pi} =: c_{00}(\widehat{\G})$.

We have the following convolution formula that will be of use to us: for $f\in L^1(\G)$, $x\in L^\infty(\G)$, and $\pi\in Irr(\G)$,
$$\pi(\widehat{f*x}) = \pi(\hat{x})\pi(f\circ S_\G).$$
See Remark \ref{antipode} for the apparent domain issue in the above equation.
\subsection{Invariant Subspaces, Ideals, and Quotients}
In this work, we are primarily interested in the left ideals of the $L^1$-algebras and measure algebras of CQGs. By exploiting the duality between $L^1(\G)$ and $L^\infty(\G)$, we identify subspaces of $L^\infty(\G)$ that correspond to left ideals in $L^1(\G)$. We also do this for the measure algebras of $\G$.
\begin{defn}
    For a LCQG $\G$, we say a subset $E\subseteq L^\infty(\G)$ is {\bf right invariant} if $E*f\subseteq E$ for all $f\in L^1(\G)$. Likewise, we say a $E\subseteq C^u(\G)$ (or $C^r(\G)$) is {\bf right invariant} if $E*\mu\subseteq E$ for all $\mu\in M^u(\G)$ (or $M^r(\G)$ resp.) For a CQG $\G$ we say a subset $E\subseteq \Pol(\G)$ is {\bf right invariant} if $E*\mu\subseteq E$ for all $\mu\in \Pol(\G)^*$. In all cases, we define left invariance similarly and $E$ is invariant if $E$ is both left and right invariant. If $X\subseteq L^\infty(\G)$ is a left invariant weak$^*$ closed subspace, then we will write $$X\trianglelefteq_l L^\infty(\G)$$ and likewise
    $X\trianglelefteq_r L^\infty(\G)$ if it is right invariant instead.
\end{defn}
\begin{rem}
    It is straightforward seeing that a von Neumann subalgebra $N\subseteq L^\infty(\G)$ is right invariant if and only if
    $$\Delta_\G(N)\subseteq L^\infty(\G)\overline{\otimes}N.$$
\end{rem}
From the bipolar theorem, it is easy to see that if $X\subseteq L^\infty(\G)$ is a right invariant subspace, then $X_\perp$ is a closed left ideal in $L^1(\G)$, and whenever $I$ is a left ideal in $L^1(\G)$, $I^\perp$ is a weak* closed right invariant subspace of $L^\infty(\G)$. We package this observation and similar ones into the following.
\begin{prop}
    The norm closed right ideals of $L^1(\G)$ are in one to one correspondence with the weak* closed right invariant subspaces of $L^\infty(\G)$ via
    $$L^1(\G) \trianglerighteq_l I\iff I^\perp \trianglelefteq_r L^\infty(\G),$$
    and the weak* closed right ideals of $M^r(\G)$ and $M^u(\G)$ are in one to one correspondence with the the norm closed left invariance subspaces of  $C^r_0(\G)$ and $C^u_0(\G)$ via
    $$C^r_0(\G), C^u_0(\G) \trianglerighteq_l X\iff X^\perp \trianglelefteq_r M^r(\G), M^u(\G).$$
    For right ideals and left invariant subpsaces, we will use the notation $\trianglelefteq_r$ and $\trianglerighteq_l$ respectively. 
\end{prop}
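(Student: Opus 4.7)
The plan is to combine two ingredients. The first is the bipolar theorem, which already provides an order-reversing bijection between weak$^*$ closed subspaces of $L^\infty(\G)$ and norm closed subspaces of $L^1(\G)$ via $X \mapsto X_\perp$, $I \mapsto I^\perp$. The second is a single algebraic identity that translates convolution on one side into the coproduct action on the other, thereby converting the ideal condition into the invariance condition.

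Concretely, I would first record the identity
\[
\langle x * f, g\rangle
\;=\;
g\bigl((f\otimes\id)\Delta_\G(x)\bigr)
\;=\;
(f\otimes g)\Delta_\G(x)
\;=\;
\langle x, f*g\rangle
\]
for $x \in L^\infty(\G)$ and $f,g \in L^1(\G)$, which is immediate from the stated formulas $x*f = (f\otimes\id)\Delta_\G(x)$ and $f*g = (f\otimes g)\Delta_\G$. Then, given a norm closed left ideal $I \trianglerighteq_l L^1(\G)$, for $x \in I^\perp$, $f \in L^1(\G)$, and any $g \in I$ we have $f*g \in I$, so $\langle x*f, g\rangle = \langle x, f*g\rangle = 0$; thus $x*f \in I^\perp$, showing right invariance of $I^\perp$. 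For the converse, if $X \trianglelefteq_r L^\infty(\G)$ is weak$^*$ closed and right invariant, then for $g \in X_\perp$, $f \in L^1(\G)$, and $x \in X$ we have $x*f \in X$, so $\langle f*g, x\rangle = \langle x*f, g\rangle = 0$; hence $f*g \in X_\perp$, showing $X_\perp$ is a left ideal. Since $I^{\perp\perp\cap L^1(\G)} = I$ (norm closed) and $X = (X_\perp)^\perp$ (weak$^*$ closed), the bipolar theorem closes the correspondence.

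For the measure-algebra statements, I would run the same argument but using the universal coproduct $\Delta^u_\G : C^u_0(\G) \to M(C^u_0(\G)\otimes_{\min} C^u_0(\G))$ (and the reduced version $\Delta^r_\G$). The analogous identity $\langle \mu * a, \nu\rangle = (\nu \otimes \mu)\Delta^u_\G(a) = \langle a, \nu * \mu\rangle$ for $a \in C^u_0(\G)$ and $\mu,\nu \in M^u(\G)$ converts the right-ideal condition $\nu * \mu \in J$ (for $\nu \in J$) into the left-invariance condition $\mu * a \in X$ (for $a \in X$), and vice versa; the $C^r_0(\G)$ case is identical. The only point that deserves a line is that although $\Delta^u_\G$ takes values in the multiplier algebra, the slices $(\id\otimes \mu)\Delta^u_\G(a)$ land in $C^u_0(\G)$ and the dual pairing is well defined, so the argument goes through unchanged. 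There is no genuine obstacle here: the content of the proposition is just the translation table between the two sides of the duality, and every implication is a one-line slice computation.
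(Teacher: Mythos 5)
Your proposal is correct and is essentially the paper's own argument: the paper offers no proof beyond invoking the bipolar theorem together with the observation (stated in the paragraph immediately preceding the proposition) that the duality $\langle x*f,g\rangle=\langle x,f*g\rangle$ interchanges the ideal condition with the invariance condition, which is exactly the slice computation you write out. Your extra remark that the slices of $\Delta^u_\G$ land in $C^u_0(\G)$, so the pairing with $M^u(\G)$ is well defined, is a sensible point of care that the paper leaves implicit.
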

Of special interest are the invariant subalgebras. If $N$ is a right invariant von Neumann subalgebra of $L^\infty(\G)$, then we call $N$ a {\bf right coideal}. When $G$ is a locally compact group, the right coideals of $VN(G)$ are exactly of the form $VN(H)$ where $H$ is a closed subgroup of $G$, and are invariant. Similarly, the right coideals of $L^\infty(G)$ are exactly of the form $L^\infty(G/H)\subseteq L^\infty(G)$ using the identification of $L^\infty(G/H)$ with the functions that are constant on cosets of $H$ (see \cite{KL19}). So, the right coideals of a LCQG offer a quantum analogue of the algebra of functions that are constant on cosets of a subgroup.

In Section $4.1$ we focus on the left ideals corresponding to a certain subclass of right coideals of a CQG. In this case we will denote $J^1(N) = N_\perp$. Notice that $N_* \cong L^1(\G)/J^1(N)$ as Banach spaces. Here is an alternative view on the relationship between $N$ and $J^1(N)$ afforded by weak* closures. Using weak*-weak* continuity of the inclusion $N\subseteq L^\infty(\G)$, find the preadjoint
$$T_N : L^1(\G) \to N_*.$$
Then $J^1(N) = \ker(T_N)$. Notice that whenever $N$ is invariant, $J^1(N)$ is a two sided ideal, and so $T_N$ is an algebraic homomorphism, meaning $N_*$ has a Banach algebra structure inherited from the quotient algebra $L^1(\G)/J^1(N)$.

\subsection{Multipliers and Approximation Properties}
We say a LCQG $\G$ is {\bf coamenable} if $L^1(\G)$ has a bai and is {\bf amenable} if $L^\infty(\G)$ possesses a left invariant state, that is, a state $m\in L^\infty(\G)^*$ such that $f(\id\otimes m)\Delta_\G(x) = f(1)m(x)$ for all $f\in L^1(\G)$.
\begin{rem}
    The Haar state for a CQG is, in particular, a left invariant state, meaning CQGs are amenable. Likewise, DQGs have unital $L^1$--algebras and hence are coamenable. It also turns out that a CQG is coamenable if and only if its dual is amenable \cite{Ruan, Bedos1, Bedos2, Bedos3, Toma}, generalizing Leptin's theorem for locally compact groups. On the other hand, while generally for LCQGs it is not too difficult to show coamenability of a LCQG implies amenability of its dual, the converse remains an open problem.
\end{rem}
\begin{rem}
    We have that $\G$ is coamenable if and only if $C^u_0(\G)\cong C^r_0(\G)$ and so in this case, we will simply write $C(\G)$ and similarly $M(\G)$ for the (in this case) distinguished measure algebra.
\end{rem}
We will also be interested in weakened versions of amenability. In lieu of the duality between coamenability and amenability (for CQG/DQGs), the natural choice is to weaken boundedness of a left or right bai (blai or brai) in $L^1(\widehat{G})$. To discuss relevant versions of this in the literature, we must first discuss multipliers and completely bounded multipliers. We recommend \cite{B16, HNR11} as references for the following discussions.
\begin{defn}
    A {\bf left multiplier} of a Banach algebra $A$ is a bounded linear right $A$--module map $m : A\to A$. We denote the left multipliers on $A$ by $M^l(A)$.  We denote the {\bf completely bounded left multipliers} by $M^l_{cb}(A) : =  M^l(A)\cap \mathcal{CB}(A)$.
\end{defn}
\begin{rem}
    Note that the elements $m\in M^l_{cb}(A)$ are exactly those for which the Banach space adjoint $m^* : A^*\to A^*$ is completely bounded.
\end{rem}
We note $M^l(A)$ is a Banach algebra, viewed as a subalgebra of $\fB(A)$, which has $A$ embedded contractively as an ideal via the map $a\mapsto m_a$ where $m_a(b) = ab$ for $b\in A$, and we will denote the Banach space adjoint $M_a := m_a^* : A^*\to A^*$. Similarly, $M^l_{cb}(A)$ is a c.c. Banach algebra, viewed as an operator subspace of $\mathcal{CB}(A)$, which has $A$ embedded into $M^l_{cb}(A)$ completely contractively. For a LCQG $\G$, because $M^u(\G)$ contains $L^1(\G)$ as an ideal, we get that $M^u(\G)$ embeds completely contractively into $M_{cb}^l(L^1(\G))$ via the map $\mu\mapsto m_\mu$ where $m_\mu(f) = \mu*f$ for $f\in L^1(\G)$ and again we denote the adjoint by $M_\mu$. Note also that $M_{\epsilon_\G^u} = \id$.

The {\bf double centralizers} of a Banach algebra $A$ are pairs $(L,R)$ of left and right multipliers $L\in M^l(A)$ and $R\in M^r(A)$ satisfying $aL(b) = R(a)b$. We denote the double centralizers by $M(A)$, which also turns out to be a Banach algebra, and has $A$ contractively embedded as an ideal via the map $a\mapsto (l_a, r_a)$ where $l_a(b) = ab$ and $r_a(b) = ba$. There is also a contractive embedding $M(A)\subseteq M^l(A)$. Similarly, we define {\bf completely bounded double centralizers} of a c.c. Banach algebra $A$, which are double centralizers whose associated bounded linear maps and completely bounded. We denote the completely bounded double centralizers by $M_{cb}(A)$. Similarly, we have completely contractive embeddings $A\subseteq M_{cb}(A)\subseteq M_{cb}^l(A)$. For LCQGs, we have $M^u(\G)\subseteq M_{cb}(L^1(\G))$.
\begin{rem}
    Whenever $\G$ is coamenable, it is the case that $M(L^1(\G)) = M^l(L^1(\G)) = M(\G)$. For locally compact co--groups, this property characterizes amenability. That $M(A(G)) = B(G)$ implies amenability for discrete $G$ is due to \cite{N82} (and generally is due to Losert \cite{L84}), and Losert extended this to the case of $M_{cb}A(G)$ in an unpublished manuscript. For discrete G, however, see \cite{BF84}. For a LCQG $\G$ in general, we also have that the completely isometric equalities $M^l_{cb}(L^1(\G)) = M^r(\G) = M^u(\G)$ characterizes coamenability (cf. \cite{HNR11}).
\end{rem}
A first weakening, then, would be to loosen the boundedness criterion of the bai.
\begin{defn}
    We say a LCQG $\G$ is {\bf weakly amenable} if there exists a net $(f_i)\subseteq L^1(\widehat{\G})$ such that $f_i*f\to f$ for all $f\in L^1(\widehat{\G})$ and $\sup_i ||f_i||_{M_{cb}^l} < \infty$.
\end{defn}
There is another relevant, even weaker version of amenability.
\begin{defn}
    We say a LCQG $\G$ has the {\bf approximation property (AP)} if there exists a net $(f_i)\subseteq L^1(\widehat{\G})$ such that $M_{f_i}\to \id_{L^\infty(\widehat{\G})}$ in the stable point weak$^*$ topology of $\mathcal{CB}(L^\infty(\widehat{\G}))$, by which we mean, for a separable Hilbert space $\fH$, we have $$\varphi(M_{e_i}\otimes\id)(x)\to \varphi(x)$$ for all $x\in L^\infty(\widehat{\G})\overline{\otimes}\fB(\fH)$ and $\varphi\in L^1(\widehat{\G})\widehat{\otimes}\fB(\fH)_*$.
\end{defn}
\begin{rem}
    The definitions of weak amenability and the AP for quantum groups are generalizations of the namesake definitions in the case where $\G = G$ is a locally compact group. They are meant to be properties that are weaker than amenability of $G$. Leptin's theorem \cite{L68} states that $G$ is amenable if and only if $L^1(\widehat{G})$ has a bai, i.e., $\widehat{G}$ is coamenable. Weak amenability and the AP are properties where $L^1(\widehat{G})$ admits an approximate identity that has a restriction on its asymptotic behaviour, but may not be bounded with respect to the norm on $L^1(\widehat{G})$. For example, finitely generated free groups are examples of non-amenable discrete groups that are weakly amenable.
\end{rem}
We have the following description of the AP, which we point out implies $L^1(\G)$ has a left approximate identity when we let $H = \{e\}$, where $H$ is as introduced in the following proposition. Note that when $H$ is compact, $1_H$ is the identity element in $A(H)$.
\begin{prop}\label{APai}
    For a LCQG $\G$, TFAE:
    \begin{enumerate}
        \item $\widehat{\G}$ has the AP;
        \item for every compact group $H$, there is a net $(f_i)\subseteq L^1(\G)$ such that
        $$||(f_i\otimes 1_H)*g - g||_1\to 0$$
        for all $g\in L^1(\G)\widehat{\otimes}A(H)$;
        \item and there is a net $(f_i)\subseteq L^1(\G)$ such that
        $$||(f_i\otimes 1_{SU(2)})*g - g||_1\to 0$$
        for all $g\in L^1(\G)\widehat{\otimes}A(SU(2))$.
    \end{enumerate}
\end{prop}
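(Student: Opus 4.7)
The plan is to establish the cycle $(1) \Rightarrow (2) \Rightarrow (3) \Rightarrow (1)$; the second implication is immediate upon specializing $H = SU(2)$.

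For $(3) \Rightarrow (1)$, pointwise norm convergence on the Banach space $L^1(\G) \widehat{\otimes} A(SU(2))$ yields, via Banach--Steinhaus, a uniform bound $C := \sup_i \|m_{f_i} \otimes \id_{A(SU(2))}\| < \infty$. The decomposition $A(SU(2)) = \bigoplus^{\ell^1}_{n \geq 1} (M_n)_*$ exhibits each trace-class space $(M_n)_*$ as a completely contractively complemented summand, dual to the $\ell^\infty$-direct sum decomposition $VN(SU(2)) = \bigoplus^{\ell^\infty}_n M_n$, so this operator norm dominates $\sup_n \|m_{f_i} \otimes \id_{(M_n)_*}\| = \|m_{f_i}\|_{cb}$, giving $\sup_i \|m_{f_i}\|_{cb} \leq C$. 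For a separable Hilbert space $\fH$, the predual $\fB(\fH)_*$ is the $\|\cdot\|_1$-closure of $\bigcup_n (M_n)_*$ (finite-rank trace class), and each $(M_n)_*$ is an operator subspace of both $A(SU(2))$ and $\fB(\fH)_*$ in compatible ways. Thus, on the dense subspace $L^1(\G) \otimes \bigcup_n (M_n)_* \subseteq L^1(\G) \widehat{\otimes} \fB(\fH)_*$, norm convergence is inherited directly from (3); combined with the uniform bound $\|m_{f_i} \otimes \id\|_{L^1(\G) \widehat{\otimes} \fB(\fH)_*} \leq \|m_{f_i}\|_{cb} \leq C$, a standard $\varepsilon/3$-argument extends this to norm convergence on all of $L^1(\G) \widehat{\otimes} \fB(\fH)_*$, which in particular gives the weak convergence demanded by (1).

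For $(1) \Rightarrow (2)$, fix a compact group $H$. Since each $g \in L^1(\G) \widehat{\otimes} A(H) = (L^\infty(\G) \overline{\otimes} VN(H))_*$ is supported on countably many matrix summands of $A(H) = \bigoplus^{\ell^1}_\pi (M_{d_\pi})_*$, one reduces to a separable subalgebra of $VN(H)$ acting on a separable Hilbert space $\fH$. The inclusion $VN(H) \hookrightarrow \fB(\fH)$ is $\sigma$-weakly continuous, and $M_{f_i} \otimes \id$ preserves $L^\infty(\G) \overline{\otimes} VN(H)$; lifting a normal functional $g$ on $L^\infty(\G) \overline{\otimes} VN(H)$ to a normal functional on $L^\infty(\G) \overline{\otimes} \fB(\fH)$ and invoking (1) yields weak convergence $(m_{f_i} \otimes \id)(g) \to g$ in $L^1(\G) \widehat{\otimes} A(H)$. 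To upgrade to the norm convergence required by (2), I apply a Day--Mazur convex combinations argument: for each finite $F = \{g_1, \ldots, g_n\} \subseteq L^1(\G) \widehat{\otimes} A(H)$ and $\varepsilon > 0$, weak convergence places $0$ in the weak closure of $\{((m_{f_i} \otimes \id)(g_k) - g_k)_{k=1}^n : i\}$ inside the direct sum $(L^1(\G) \widehat{\otimes} A(H))^n$; Mazur's lemma then supplies a convex combination $\tilde f_{F, \varepsilon} = \sum_k \lambda_k f_{i_k} \in L^1(\G)$ satisfying $\|(m_{\tilde f_{F, \varepsilon}} \otimes \id)(g_j) - g_j\| < \varepsilon$ simultaneously for $j = 1, \ldots, n$. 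Reindexing by the directed set of pairs $(F, \varepsilon)$ produces the desired net in $L^1(\G)$ witnessing (2).

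The main obstacle is the weak-to-norm passage in $(1) \Rightarrow (2)$: Day's direct-sum construction is needed to achieve simultaneous approximation across multiple test elements via a single convex combination, and this requires careful reindexing to obtain a genuine net rather than an ad hoc family. The identity $\|m_f \otimes \id_{A(SU(2))}\| = \|m_f\|_{cb}$ invoked in $(3) \Rightarrow (1)$, while standard from the operator space projective tensor product theory, is the technical backbone allowing the single test with $SU(2)$ to deliver the full strength of the AP for arbitrary separable $\fH$.
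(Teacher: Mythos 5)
Your argument is correct, and it is essentially the same as the paper's: the paper proves this proposition simply by citing that the functional-analytic proof of \cite[Theorem 1.11]{KH94} (Haagerup--Kraus) passes verbatim to LCQGs, and your writeup is a faithful reconstruction of exactly that argument --- the Banach--Steinhaus/complemented-block reduction identifying $\sup_n\|m_{f_i}\otimes\id_{(M_n)_*}\|$ with $\|m_{f_i}\|_{cb}$ for $(3)\Rightarrow(1)$, and the lift-to-$\fB(\fH)$ plus Mazur convex-combination upgrade for $(1)\Rightarrow(2)$.
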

\begin{proof}
    The proof follows verbatim the proof of \cite[Theorem 1.11]{KH94}: the techniques are entirely functional analytic and pass directly to LCQGs.
\end{proof}
Note that the map $M_{cb}(L^1(\G))\ni m\mapsto m^* \in \mathcal{CB}_{L^1(\G)}^\sigma(L^\infty(\G))$ is a completely isometric isomorphism, where $\mathcal{CB}^\sigma_{L^1(\G)}(L^\infty(\G))$ denotes the normal completely bounded right $L^1(\G)$-module maps on $L^\infty(\G)$. Crann pointed out in \cite[Proposition 3.2]{C17(1)} that the work of Kraus and Ruan \cite[Theorem 2.2]{KR99} extends directly from Kac algebras to LCQGs so that we obtain a predual
$$M^l_{cb}(L^1(\G))_* =Q_{cb}^l(L^1(\G)) = \{\omega_{A,\varphi} : A\in C^r_0(\G)\otimes_{min}\fK(\fH), \varphi\in L^1(\G)\widehat{\otimes}\mathcal{T}(\fH)\}$$
where $\omega_{A,\varphi}(M) = \varphi(M\otimes\id)(A)$ for $M\in M_{cb}^l(L^1(\G))$, $\fH$ is a separable Hilbert space, and $\mathcal{T}(\fH) = \fB(\fH)_*$ is the space of trace class operators.

For $T\in L^\infty(\G)\overline{\otimes}\fB(\fH)$ and $\varphi\in L^1(\G)\widehat{\otimes}\mathcal{T}(\fH)$, we similarly use $\omega_{T,\varphi}$ to denote the functional satisfying $\omega_{T,\varphi}(M) = \varphi(M\otimes\id)(T)$. Then, the argument used by Kraus and Ruan in \cite[Proposition 5.2]{KR99} extends directly to CQGs to give us the following.
\begin{prop}\label{CQGsRepresentationQcb}
    Let $\G$ be a CQG. Then $\omega_{T,\varphi}\in Q^l_{cb}(L^1(\G))$ for all $T\in L^\infty(\G)\overline{\otimes}\fB(\fH)$ and $\varphi\in L^1(\G)\widehat{\otimes}\mathcal{T}(\fH)$.
\end{prop}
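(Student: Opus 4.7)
The plan is to adapt the functional-analytic proof of Kraus and Ruan \cite[Proposition 5.2]{KR99} from the Kac algebra setting directly to CQGs. Their argument uses only the coproduct, the normal extension of multipliers, and the structure of the predual of the von Neumann tensor product---all available here, with the added convenience that $C(\G)$ is unital and $\Pol(\G)\subseteq C(\G)$ is weak$^*$ dense in $L^\infty(\G)$.

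First, one verifies that $\omega_{T,\varphi}$ is bounded on $M_{cb}^l(L^1(\G))$: for $M\in M_{cb}^l(L^1(\G))$, the dual action $M^*$ on $L^\infty(\G)$ is completely bounded and normal, so the amplification $M^*\otimes\id$ extends normally to $L^\infty(\G)\overline{\otimes}\fB(\fH)$, and pairing with the normal functional $\varphi\in (L^\infty(\G)\overline{\otimes}\fB(\fH))_*$ yields $|\omega_{T,\varphi}(M)|\leq \|T\|\,\|\varphi\|\,\|M\|_{cb}$. Next, using norm density of $L^1(\G)\odot\mathcal{T}(\fH)$ in $L^1(\G)\widehat{\otimes}\mathcal{T}(\fH)$ together with norm-closedness of $Q_{cb}^l(L^1(\G))\subseteq M_{cb}^l(L^1(\G))^*$, one reduces to the case $\varphi=\sum_{i=1}^N f_i\otimes \omega_i$ with $\omega_i$ of finite rank. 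For such $\varphi$ only a finite-rank corner of $T$ contributes, so one may replace $T$ with $(1\otimes p)T(1\otimes p)$ for a finite-rank projection $p$, reducing further to $T\in M_n(L^\infty(\G))$ for some $n<\infty$.

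Second, to represent $\omega_{T,\varphi}$ as $\omega_{A,\psi}$ with $A\in C(\G)\otimes_{min}\fK(\fH')$ and $\psi\in L^1(\G)\widehat{\otimes}\mathcal{T}(\fH')$ (allowing $\fH'$ possibly larger than $\fH$), one follows Kraus-Ruan: use the standard form representation of the normal functional $\varphi$ combined with a slice-map identity involving the fundamental unitary $W_\G$. The unitality of $C(\G)$ lets one absorb the $L^\infty(\G)$-component of $T$ into a compact operator on an enlarged Hilbert space (typically $\fH' = L^2(\G)\otimes\fH$), landing the resulting element in $C(\G)\otimes_{min}\fK(\fH')$, from which $\omega_{T,\varphi}\in Q_{cb}^l(L^1(\G))$ follows by definition.

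The main obstacle is precisely this explicit construction of $A$ and $\psi$. A naive approach of approximating $T$ in the weak$^*$ topology by $M_n(C(\G))$-elements only produces weak$^*$-convergence of the functionals $\omega_{T_\alpha,\varphi}$ in $M_{cb}^l(L^1(\G))^*$, which in general does not preserve the norm-closed subspace $Q_{cb}^l(L^1(\G))$ (think of $c_0\subseteq\ell^\infty$). The Kraus-Ruan machinery circumvents this by producing the representative $(A,\psi)$ directly through the slice-map / fundamental-unitary manipulation above, and the CQG setting makes this adaptation clean because $C(\G)$ is unital and the reduction to $M_n(L^\infty(\G))$ exploits finite dimensionality.
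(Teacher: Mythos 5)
Your overall strategy is the same as the paper's, which in fact supplies no argument at all beyond the sentence that ``the argument used by Kraus and Ruan in \cite[Proposition 5.2]{KR99} extends directly to CQGs''; and your preliminary reductions are sound (boundedness of $\omega_{T,\varphi}$, reduction to $\varphi$ a finite sum of elementary tensors with finite-rank second legs and hence to $T\in L^\infty(\G)\otimes M_n$, via norm-closedness of $Q^l_{cb}(L^1(\G))$ in $M^l_{cb}(L^1(\G))^*$). You are also right that naive weak$^*$ approximation of $T$ cannot work. The problem is that the one step that carries the entire proof --- producing the representative $(A,\psi)$ with $A\in C^r(\G)\otimes_{min}\fK$ --- is left as a black box, and the tools you gesture at to fill it are not the ones that do the job. ``Absorbing the $L^\infty(\G)$-component of $T$ into a compact operator on $L^2(\G)\otimes\fH$'' via unitality of $C(\G)$ cannot be right as stated: elements of $L^\infty(\G)$ are essentially never compact on $L^2(\G)$, and unitality produces no compactness. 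More tellingly, the fundamental unitary $W_\G$ and standard forms of normal functionals exist for \emph{every} LCQG, whereas the paper explicitly remarks that this proposition does not clearly extend beyond CQGs because the proof ``makes essential use of the underlying Hopf algebras''; so whatever the CQG-specific ingredient is, it is not $W_\G$.

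That ingredient is the Peter--Weyl structure, used as follows. The dense subalgebra $\lambda_\G^{-1}(c_{00}(\widehat{\G}))=\widehat{\Pol(\G)}\subseteq L^1(\G)$ has local units: for $f$ supported on finitely many $\pi\in Irr(\G)$ there is $e_f$ in the same subalgebra with $f*e_f=f$ (these are the elements $\epsilon_{\oplus_i\pi_i}$ constructed in the proof of Proposition \ref{Annj(E)}). Since $M\in M^l_{cb}(L^1(\G))$ is a right module map, $M(f)=M(f*e_f)=M(f)*e_f$, whence
$$\langle M^*(x),f\rangle=\langle x,M(f)*e_f\rangle=\langle (\id\otimes e_f)\Delta_\G(x),M(f)\rangle=\langle M^*(e_f*x),f\rangle,$$
and $e_f*x=(\id\otimes e_f)\Delta_\G(x)$ lies in $\Pol(\G)\subseteq C^r(\G)$ because $e_f$ only sees finitely many matrix-coefficient blocks (cf.\ the convolution formula $\pi(\widehat{f*x})=\pi(\hat{x})\pi(f\circ S_\G)$). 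Applying this entrywise to $T\in L^\infty(\G)\otimes M_n$ replaces $T$ by an element of $\Pol(\G)\otimes M_n\subseteq C^r(\G)\otimes_{min}\fK(\fH)$ without changing $\omega_{T,\varphi}$ on the dense set of $\varphi$ with first legs in $\widehat{\Pol(\G)}$, and norm-closedness of $Q^l_{cb}$ finishes. Your write-up invokes neither the module property of $M$, nor the local units, nor $\Pol(\G)$, so the step you yourself identify as ``the main obstacle'' is genuinely missing rather than merely deferred.
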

With Proposition \ref{CQGsRepresentationQcb} in hand, a proof verbatim to the proof in \cite[Theorem 5.4]{KR99} shows the following, which allows us to view the AP of the duals of CQGs within our framework of weakening coamenability. For convenience, we will supply a proof.
\begin{prop}\label{APweak*bai}
    We have that a DQG $\G$ has the AP if and only if there exists a net $(e_i)\subseteq L^1(\widehat{\G})$ such that $e_i\to \epsilon_{\widehat{\G}}^u$ in the $\sigma(M_{cb}^l(L^1(\widehat{\G})), Q_{cb}(L^1(\widehat{\G})))$ topology on $M_{cb}^l(L^1(\widehat{\G}))$.
\end{prop}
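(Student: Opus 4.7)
The plan is to show that the two convergences encode exactly the same pointwise pairings, with Proposition~\ref{CQGsRepresentationQcb} supplying the crucial bridge. Recall that under the embedding $L^1(\widehat{\G}) \hookrightarrow M^u(\widehat{\G}) \hookrightarrow M^l_{cb}(L^1(\widehat{\G}))$, each $e \in L^1(\widehat{\G})$ is identified with the multiplier $m_e$ whose adjoint is $M_e$, and $\epsilon^u_{\widehat{\G}}$ is identified with the identity map. So it suffices to compare how the stable point weak$^*$ topology (from the definition of the AP) and the weak$^*$ topology on $M^l_{cb}(L^1(\widehat{\G}))$ (coming from the predual $Q^l_{cb}(L^1(\widehat{\G}))$) detect convergence.

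For the forward direction, assume $\G$ has the AP via a net $(f_i) \subseteq L^1(\widehat{\G})$. Then $\varphi((M_{f_i} \otimes \id)(x)) \to \varphi(x)$ for every $x \in L^\infty(\widehat{\G}) \overline{\otimes} \fB(\fH)$ and every $\varphi \in L^1(\widehat{\G}) \widehat{\otimes} \mathcal{T}(\fH)$. In particular, for every $A \in C^r_0(\widehat{\G}) \otimes_{min} \fK(\fH)$ one obtains
$$\omega_{A,\varphi}(m_{f_i}) = \varphi((M_{f_i}\otimes\id)(A)) \to \varphi(A) = \omega_{A,\varphi}(M_{\epsilon^u_{\widehat{\G}}}).$$
Since the defining generating family of $Q^l_{cb}(L^1(\widehat{\G}))$ is precisely the collection of such $\omega_{A,\varphi}$, we conclude that $m_{f_i} \to \epsilon^u_{\widehat{\G}}$ in the $\sigma(M^l_{cb}(L^1(\widehat{\G})), Q_{cb}(L^1(\widehat{\G})))$ topology.

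For the converse, suppose $(e_i) \subseteq L^1(\widehat{\G})$ satisfies $m_{e_i} \to \epsilon^u_{\widehat{\G}}$ weak$^*$. Fix any $T \in L^\infty(\widehat{\G}) \overline{\otimes} \fB(\fH)$ and $\varphi \in L^1(\widehat{\G}) \widehat{\otimes} \mathcal{T}(\fH)$. Here is where the hypothesis that $\widehat{\G}$ is a CQG is essential: Proposition~\ref{CQGsRepresentationQcb} promotes $\omega_{T,\varphi}$ to a bona fide element of $Q^l_{cb}(L^1(\widehat{\G}))$ even though $T$ need not lie in $C^r_0(\widehat{\G}) \otimes_{min} \fK(\fH)$. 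Applying the weak$^*$ hypothesis to $\omega_{T,\varphi}$ yields $\varphi((M_{e_i} \otimes \id)(T)) \to \varphi(T)$, and since $T$ and $\varphi$ were arbitrary, this is exactly the AP for $\G$.

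The only real obstacle is this asymmetry: the AP tests convergence against operators in the full von Neumann tensor product $L^\infty(\widehat{\G}) \overline{\otimes} \fB(\fH)$, whereas the weak$^*$ topology on $M^l_{cb}(L^1(\widehat{\G}))$ is a priori only sensitive to pairings with elements from the smaller minimal $C^*$-tensor product $C^r_0(\widehat{\G}) \otimes_{min} \fK(\fH)$. Proposition~\ref{CQGsRepresentationQcb} is exactly what removes this asymmetry in the compact case, and once invoked the remaining argument is routine bookkeeping of the definitions.
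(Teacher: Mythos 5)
Your argument is correct and is essentially identical to the paper's own proof: both directions restrict the AP convergence to the generating functionals $\omega_{A,\varphi}$ for the forward implication, and both invoke Proposition~\ref{CQGsRepresentationQcb} to upgrade $\omega_{T,\varphi}$ to an element of $Q^l_{cb}(L^1(\widehat{\G}))$ for arbitrary $T\in L^\infty(\widehat{\G})\overline{\otimes}\fB(\fH)$ in the converse. Your closing paragraph correctly isolates the one nontrivial point, namely that Proposition~\ref{CQGsRepresentationQcb} resolves the asymmetry between the minimal tensor product and the von Neumann tensor product.
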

\begin{proof}
    Suppose $\G$ has the AP. Let $(e_i)\subseteq L^1(\widehat{\G})$ be a net such that $M^l_{e_i}$ converges in the stable point weak$^*$ topology to $\id$. In particular, for $A\in C^r(\G)\otimes_{min}\fK(\fH)$ and $\varphi\in A(\G)\widehat{\otimes}\mathcal{T}(\fH)$
    \begin{align*}
        \omega_{A,\varphi}(M_{e_i}) &= \varphi(M_{e_i}\otimes \id)(A)\to \varphi(A) = \omega_{A,\varphi}(M_{\epsilon_{\widehat{\G}^u}})
    \end{align*}
    which says exactly that $e_i\to \epsilon_{\widehat{\G}}^u$ in the weak$^*$ topology on $M^l_{cb}(L^1(\widehat{\G}))$.\\
    \\
    Conversely, suppose $e_i\to\epsilon_{\widehat{\G}}^u$ in the weak$^*$ topology on $M^l_{cb}(L^1(\widehat{\G}))$. Then, for $T\in VN(\G)\overline{\otimes}\fB(\fH)$ and $\varphi\in A(\G)\widehat{\otimes}\mathcal{T}(\fH)$, using Proposition \ref{CQGsRepresentationQcb}, we have
    \begin{align*}
        \varphi(M_{e_i}\otimes\id)(T) = \omega_{T,\varphi}(M_{e_i}^l) &\to \omega_{T,\varphi}(M_{\epsilon_{\widehat{\G}}^u}) = \varphi(T)
    \end{align*}
    which says exactly that $M_{e_i}\to \id$ in the stable point weak$^*$ topology.
\end{proof}
\begin{rem}
    We are unaware of a version of Proposition \ref{APweak*bai} for general LCQGs. To prove the result, we would require a general version of Proposition \ref{CQGsRepresentationQcb}, however, the proof makes essential use of the underlying Hopf algebras of CQGs and does not clearly extend to general LCQGs.
\end{rem}
An immediate observation from Proposition \ref{APweak*bai} is the following.
\begin{cor}
    A weakly amenable DQG has the AP.
\end{cor}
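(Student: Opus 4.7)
The plan is to show that the weakly amenable net itself witnesses the AP, using Proposition \ref{APweak*bai} as the bridge. Let $(f_i)\subseteq L^1(\widehat{\G})$ be the weakly amenable net, so $C:=\sup_i\|f_i\|_{M^l_{cb}}<\infty$ and $\|f_i*f-f\|_1\to 0$ for every $f\in L^1(\widehat{\G})$. By Proposition \ref{APweak*bai}, it suffices to show that $f_i\to \epsilon^u_{\widehat{\G}}$ in the $\sigma(M^l_{cb}(L^1(\widehat{\G})),Q^l_{cb}(L^1(\widehat{\G})))$ topology, i.e. $\omega_{A,\varphi}(M_{f_i})\to \omega_{A,\varphi}(M_{\epsilon^u_{\widehat{\G}}})$ for every $A\in C^r_0(\widehat{\G})\otimes_{\min}\fK(\fH)$ and $\varphi\in L^1(\widehat{\G})\widehat{\otimes}\mathcal{T}(\fH)$.

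The first move is to rewrite the pairing via duality. Since $M_{f_i}$ is normal with preadjoint $m_{f_i}$, the map $M_{f_i}\otimes \id_{\fB(\fH)}$ is normal on $L^\infty(\widehat{\G})\overline{\otimes}\fB(\fH)$ with preadjoint $m_{f_i}\otimes \id_{\mathcal{T}(\fH)}$ on $L^1(\widehat{\G})\widehat{\otimes}\mathcal{T}(\fH)$. Hence
\[
\omega_{A,\varphi}(M_{f_i})=\varphi\bigl((M_{f_i}\otimes\id)(A)\bigr)=\bigl\langle A,\,(m_{f_i}\otimes\id_{\mathcal{T}(\fH)})\varphi\bigr\rangle.
\]
So the task reduces to showing $(m_{f_i}\otimes\id_{\mathcal{T}(\fH)})\varphi\to\varphi$ in $L^1(\widehat{\G})\widehat{\otimes}\mathcal{T}(\fH)$-norm; the desired scalar convergence then follows by continuity of the pairing, and the limit is $\langle A,\varphi\rangle=\varphi(A)=\omega_{A,\varphi}(\id)$.

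The technical step is the norm convergence $(m_{f_i}\otimes\id_{\mathcal{T}(\fH)})\varphi\to\varphi$. On an elementary tensor $f\otimes\tau$ this is just $(f_i*f)\otimes\tau\to f\otimes \tau$, which is weak amenability. The definition of the operator space projective tensor product gives $\|m_{f_i}\otimes\id_{\mathcal{T}(\fH)}\|\le \|m_{f_i}\|_{cb}=\|f_i\|_{M^l_{cb}}\le C$, so a standard three-epsilon argument (approximate $\varphi$ by a finite sum of elementary tensors, apply convergence termwise, control the remainder by $C$) extends pointwise convergence from elementary tensors to arbitrary $\varphi\in L^1(\widehat{\G})\widehat{\otimes}\mathcal{T}(\fH)$.

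I do not expect a serious obstacle: the only point requiring care is that the cb-norm, rather than the operator norm, is the correct quantity that governs the tensor extension on the operator space projective tensor product, and this is precisely what weak amenability supplies. Once that density-and-uniform-boundedness argument is in place, chaining together the three displayed lines and invoking Proposition \ref{APweak*bai} finishes the proof.
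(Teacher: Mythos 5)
Your argument is correct and follows exactly the route the paper intends: the paper derives this corollary as an ``immediate observation'' from Proposition \ref{APweak*bai}, and your proof simply fills in the standard details of that route (uniform cb-boundedness of $m_{f_i}\otimes\id$ on the operator space projective tensor product plus norm convergence on elementary tensors, hence on all of $L^1(\widehat{\G})\widehat{\otimes}\mathcal{T}(\fH)$ by density). Nothing further is needed.
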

We should note that we believe that the results of this subsection are not new. We included them and their justifications because we have not been able to find a reference and we will be using the fact that quantum groups with the AP have an approximate identity, and that weak amenability implies the AP. 
\section{Structure of Left Ideals}
\subsection{Left Ideals of $L^1$--algebras}
For the rest of this paper, $\G$ will always denote a CQG unless otherwise specified. This section contains the most fundamental results of our paper. A structure theorem for the left ideals of $L^1(\G)$ is obtained, which, as we will make explicit in Section $3.3$, allows us to describe the closed left ideals of $L^1(\G)$ for many CQGs $\G$, including those that have the approximation property. In lieu of the previous section, this includes the weakly amenable CQGs.

Recall that for $x\in \Pol(\G)$, $\hat{x} = h\cdot x$. So, fix a CQG $\G$ and let
$$E = (E_\pi)_{\pi\in Irr(\G)}$$
where each $E_\pi\subseteq\fH_\pi$ is a subspace (possibly trivial or all of $\fH_\pi$). We will write
$$I(E) = \{ f\in L^1(\G) : \pi(f)(E_\pi) = 0, ~E_\pi\in E\}$$
and
$$j(E) = I(E)\cap \widehat{\Pol(\G)} = I(E)\cap \lambda_\G^{-1}(c_{00}(\widehat{\G})),$$
It is easy to check $j(E)$ is a left ideal and $I(E)$ a closed left ideal in $L^1(\G)$. If $I$ is an ideal such that $j(E)\subseteq I\subseteq I(E)$ then the set $E$ will be referred to as a {\bf hull} of $I$.
\begin{rem}
    It should be addressed that $I(E)$, and hence $j(E)$, is independent of the choice of representatives in $Irr(\G)$ (where the subspaces $E_\pi$ are chosen up to isomorphism). Indeed, suppose $\pi$ is unitarily equivalent to $\rho$, and write $(1\otimes U^*)U^\pi(1\otimes U) = U^\rho$. Note that the unitary $U$ is a Hilbert space isomorphism $U : \fH_\rho\to \fH_\pi$. Then $\pi(f)\xi = U\rho(f)U^*\xi$ shows $\pi(f)\xi = 0$ if and only if $\rho(f)U^*\xi = 0$. In particular, we have $UE_\rho = E_\pi$.
\end{rem}
\begin{defn}
    We say $E$ is a {\bf set of synthesis} if $I(E) = \overline{j(E)}$.
\end{defn}
Before proceeding, we recall the following well-known fact.
\begin{lem}\label{leftIdealsMn}
    Let $I$ be a left ideal in some matrix algebra $M_n$. Then $I = \{ A\in M_n : A(E) = 0\}$ for some subspace $E\subseteq \C^n$.
\end{lem}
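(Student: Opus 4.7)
The plan is to define the hull $E$ from $I$ and then verify the two inclusions. Concretely, set
$$E := \bigcap_{A \in I} \ker(A) \subseteq \mathbb{C}^n.$$
The inclusion $I \subseteq \{B \in M_n : B(E) = 0\}$ is immediate from this definition, since by construction every $A \in I$ annihilates $E$.

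For the reverse inclusion, I would exploit finite dimensionality. Because $I$ is a subspace of the finite-dimensional algebra $M_n$, there exist finitely many $A_1, \ldots, A_m \in I$ with $\bigcap_{i=1}^m \ker(A_i) = E$ (for instance, a spanning set of $I$). Assemble these into a single linear map
$$\widetilde{A} : \mathbb{C}^n \to (\mathbb{C}^n)^m, \qquad v \mapsto (A_1 v, \ldots, A_m v),$$
whose kernel is exactly $E$.

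Now suppose $B \in M_n$ satisfies $B(E) = 0$, i.e.\ $\ker(B) \supseteq \ker(\widetilde{A})$. By the universal property of the quotient (equivalently, since $\mathbb{C}^n / \ker(\widetilde{A})$ embeds into $(\mathbb{C}^n)^m$ via $\widetilde{A}$ and any linear map out of a subspace extends to the ambient space), $B$ factors as $B = D \widetilde{A}$ for some linear map $D : (\mathbb{C}^n)^m \to \mathbb{C}^n$. Writing $D$ coordinatewise as $D = (D_1, \ldots, D_m)$ with $D_i \in M_n$ yields
$$B = \sum_{i=1}^m D_i A_i,$$
which lies in $I$ because each $A_i \in I$ and $I$ is a left ideal of $M_n$. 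This gives $\{B : B(E) = 0\} \subseteq I$ and completes the proof.

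There is no real obstacle here: the argument is elementary linear algebra, the only conceptual point being the factorization principle "a linear map factors through another precisely when its kernel contains the other's kernel," and the use of finite dimensionality to aggregate countably many elements of $I$ into a single map $\widetilde{A}$ realizing $E$ as a kernel.
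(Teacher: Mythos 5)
Your proof is correct. Note that the paper does not actually prove this lemma itself: it cites \cite[Lemma 38.11]{HR63}, where the classical argument runs through the structure theory of $M_n$ --- one shows that every left ideal is of the form $M_n p$ for an idempotent $p$ (e.g.\ via minimal left ideals, the ``column'' ideals $M_n e_{kk}$), and then takes $E = \ker(p)$. Your route is different and entirely self-contained: you define $E = \bigcap_{A \in I} \ker(A)$, reduce to finitely many generators by finite dimensionality, and use the elementary factorization principle that $\ker(B) \supseteq \ker(\widetilde{A})$ forces $B = D\widetilde{A}$, whence $B = \sum_i D_i A_i \in I$ by the left ideal property. This buys a proof that uses nothing about the algebra structure of $M_n$ beyond linearity (no idempotents, no semisimplicity), at the cost of being slightly less informative --- the idempotent approach also tells you that $I = M_n p$, which is sometimes useful. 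The only blemish is the phrase ``countably many'' in your closing remark, which should read ``finitely many''; the argument itself correctly uses a finite spanning set.
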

The proof of Lemma \ref{leftIdealsMn} can be found, for example, in \cite[Lemma 38.11]{HR63}.
\begin{prop}\label{idealsinL^1}
    Let $\G$ be a CQG and $I\trianglelefteq L^1(\G)$ a left ideal. Then there is a hull $E$ such that $j(E)\subseteq I\subseteq I(E)$.
\end{prop}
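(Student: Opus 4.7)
The natural candidate is to set $E_\pi := \bigcap_{f\in I}\ker\pi(f)$ for each $\pi\in Irr(\G)$; the inclusion $I\subseteq I(E)$ is then automatic, and the substance of the proposition lies entirely in the reverse containment $j(E)\subseteq I$.

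The key technical ingredient I would isolate is a family of spectral cutoffs: for every $\pi\in Irr(\G)$ there exists $p_\pi\in\widehat{\Pol(\G)}$ with $\pi(p_\pi) = \id_{\fH_\pi}$ and $\rho(p_\pi)=0$ for $\rho\neq\pi$. Such elements exist because $\lambda_\G$ is a linear bijection $\widehat{\Pol(\G)}\to c_{00}(\widehat{\G})$: injectivity of $\lambda_\G$ on $L^1(\G)$ and the identification $\lambda_\G(\widehat{\Pol(\G)}) = c_{00}(\widehat{\G})$ are both recorded in the preliminaries, with the surjectivity made explicit by the formula
$$e_{i,j}^\pi = \sum_{k=1}^{n_\pi}\frac{1}{\tr(F_\pi)}(F_\pi)_{i,k}^{-1}\widehat{(u_{k,j}^\pi)^*}.$$
Thus the unit of the $\pi$-block of $c_{00}(\widehat{\G})$ pulls back to some $p_\pi\in\widehat{\Pol(\G)}$ with the required properties.

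Once these cutoffs are in hand, the next observation is that $\pi(I)$ is a left ideal of $M_{n_\pi}$: since $\pi(\widehat{\Pol(\G)}) = M_{n_\pi}$, one has $M_{n_\pi}\pi(I) = \pi(\widehat{\Pol(\G)} * I)\subseteq \pi(I)$. Lemma \ref{leftIdealsMn} then yields $\pi(I) = \{A\in M_{n_\pi}: A(F_\pi) = 0\}$ for some subspace $F_\pi\subseteq \fH_\pi$, which by construction must coincide with $E_\pi$.

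Finally, take $f\in j(E) = I(E)\cap \widehat{\Pol(\G)}$. The condition $\lambda_\G(f)\in c_{00}(\widehat{\G})$ forces the set $F := \{\pi\in Irr(\G) : \pi(f)\neq 0\}$ to be finite, and for each $\pi\in F$ the hypothesis $\pi(f)(E_\pi) = 0$ combined with the previous paragraph gives $\pi(f)\in\pi(I)$; pick $g_\pi\in I$ with $\pi(g_\pi) = \pi(f)$. Setting $f' := \sum_{\pi\in F}p_\pi*g_\pi$, we have $f'\in I$ because $I$ is a left ideal and each $p_\pi\in L^1(\G)$, and the defining properties of the $p_\pi$'s yield $\rho(f') = \rho(f)$ for every $\rho\in Irr(\G)$. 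Hence $\lambda_\G(f') = \lambda_\G(f)$ in $c_{00}(\widehat{\G})$, and injectivity of $\lambda_\G$ forces $f = f'\in I$. The only delicate step in the whole argument is the construction of the cutoffs $p_\pi$; everything else is bookkeeping together with the matrix-algebra lemma.
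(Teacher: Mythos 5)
Your proposal is correct and follows essentially the same route as the paper: both reduce to the matrix blocks $\pi(I)\trianglelefteq M_{n_\pi}$, invoke Lemma \ref{leftIdealsMn} to produce the subspaces $E_\pi$, and then use the surjectivity of $\lambda_\G$ from $\widehat{\Pol(\G)}$ onto $c_{00}(\widehat{\G})$ to manufacture cutoff elements that, multiplied against suitable members of $I$, reproduce any $f\in j(E)$ inside $I$. The only cosmetic difference is that you use one cutoff $p_\pi$ and one $g_\pi\in I$ per irreducible and sum, whereas the paper picks a single $g\in I$ matching $f$ on the whole (finite) support and multiplies by a single block-sum cutoff.
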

\begin{proof}
    We follow the methods used for compact groups in \cite{HR63}. Let $1_\pi$ be the projection onto $M_{n_\pi}$. Then $\pi(f) = 1_\pi\lambda_\G(f)$, and so from density, combined with the fact $M_{n_\pi}$ is finite dimensional, $\pi(L^1(\G)) = M_{n_\pi}$. Consequently, $\pi(I)$ is a left ideal in $M_n$. Then, using Lemma \ref{leftIdealsMn}, we can write $\pi(I) = \{\pi(f)\in M_n : f\in L^1(\G), \pi(f)(E_\pi) = 0\}$ for some subspace $E_\pi\subseteq \fH_\pi$. Let $E = (E_\pi)_{\pi\in Irr(\G)}$, where each $E_\pi\subseteq \fH_\pi$ is the aforementioned subspace for each $\pi$. From here, it is easy to see that $I\subseteq I(E)$.
    
    Before proceeding, we note that for any finite dimensional $*$-representation $\pi$ of $L^1(\G)$, $\pi(j(E)) = \pi(I)$. Indeed, $\pi$ decomposes into a finite direct sum of irreducible representations, so $\pi(I)\subseteq \pi(L^1(\G))\subseteq c_{00}(\widehat{\G})$. Then,
    $$\pi(j(E)) = \pi(I)\cap c_{00}(\widehat{\G}) = \pi(I)$$
    where the first equality follows because $\lambda_\G(j(E)) = \lambda_\G(I)\cap c_{00}(\widehat{\G})$.
    
    Now take $f\in j(E)$, so $\lambda_\G(f) = \oplus_{i=1}^n \pi_i(f)$ for some $\pi_1,\ldots,\pi_n\in Irr(\G)$. Since $\oplus_{i=1}^n \pi_i(j(E)) =\oplus_{i=1}^n \pi_i(I)$, we can find $g\in I$ so that $\oplus_{i=1}^n \pi_i(g) = \lambda_\G(f)$. Set $P = \oplus_{i=1}^n I_{n_{\pi_i}}$, where each $I_{n_{\pi_i}}\in M_{n_{\pi_i}}\subseteq \ell^\infty(\widehat{\G})$ is the identity matrix, and let $e\in L^1(\G)$ be such that $\lambda_\G(e) = P$. Then
    $$\lambda_\G(I)\ni \lambda_\G(e*g) = P\lambda_\G(g) = \bigoplus_{i=1}^n \pi_i(g) = \lambda_\G(f).$$
\end{proof}
The two-sided case is as follows.
\begin{cor}\label{twoidealsinL^1}
   Let $\G$ be a CQG and $I\trianglelefteq L^1(\G)$ an ideal. Then there exists a hull of $\G$, say $E$, such that $j(E)\subseteq I\subseteq I(E)$ where each $E_\pi\in E$ is either $\fH_\pi$ or $\{0\}$.
\end{cor}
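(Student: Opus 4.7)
The plan is to piggyback directly on Proposition \ref{idealsinL^1} and simply observe that the two-sided hypothesis on $I$ forces each $\pi(I)$ to be a two-sided ideal in the simple algebra $M_{n_\pi}$, which leaves only the trivial possibilities.

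First I would apply Proposition \ref{idealsinL^1} to obtain a hull $E = (E_\pi)_{\pi \in Irr(\G)}$ with $j(E) \subseteq I \subseteq I(E)$, where the subspaces $E_\pi \subseteq \fH_\pi$ are chosen so that $\pi(I) = \{A \in M_{n_\pi} : A(E_\pi) = 0\}$ (this is exactly the description obtained via Lemma \ref{leftIdealsMn} in the proof of the proposition). Then I would observe that for $f \in I$ and $g \in L^1(\G)$, the two-sidedness of $I$ gives $f * g \in I$, hence $\pi(f)\pi(g) = \pi(f * g) \in \pi(I)$; since $\pi(L^1(\G)) = M_{n_\pi}$ (as shown in the proof of Proposition \ref{idealsinL^1}), this says $\pi(I)$ is a right ideal in $M_{n_\pi}$. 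Combined with its being a left ideal, $\pi(I)$ is a two-sided ideal of $M_{n_\pi}$.

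Since $M_{n_\pi}$ is simple, $\pi(I) \in \{\{0\}, M_{n_\pi}\}$. Reading this through the description $\pi(I) = \{A : A(E_\pi) = 0\}$: the case $\pi(I) = \{0\}$ forces $E_\pi = \fH_\pi$, and the case $\pi(I) = M_{n_\pi}$ forces $E_\pi = \{0\}$. Either way each $E_\pi$ is $\fH_\pi$ or $\{0\}$, and the containment $j(E) \subseteq I \subseteq I(E)$ is inherited from Proposition \ref{idealsinL^1}.

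There is essentially no obstacle here: the corollary is a clean consequence of simplicity of $M_{n_\pi}$ and the fact, already established, that irreducible $*$-representations of $L^1(\G)$ surject onto the matrix block. The only mild subtlety is ensuring that $\pi$ is an algebra homomorphism so that $\pi(I)$ inherits the ideal structure, which is built into the definition of a representation of the convolution algebra.
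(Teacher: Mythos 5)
Your proposal is correct and follows the same route as the paper: both arguments take the hull from Proposition \ref{idealsinL^1} and note that two-sidedness of $I$ makes $\pi(I)$ a two-sided ideal of the simple algebra $M_{n_\pi}$, forcing $\pi(I)\in\{\{0\},M_{n_\pi}\}$ and hence $E_\pi\in\{\fH_\pi,\{0\}\}$. Your version merely spells out the multiplicativity of $\pi$ and the surjectivity $\pi(L^1(\G))=M_{n_\pi}$ that the paper leaves implicit.
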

\begin{proof}
    Following Proposition \ref{idealsinL^1}, what is left is noticing that each $E_\pi\in E$ must satisfy either $E_\pi = \fH_\pi$ or $E = \{0\}$. Inspecting the proof of Proposition \ref{idealsinL^1}, the result follows because $\pi(I)$ is a two sided ideal $M_{n_\pi}$: we either have $\pi(I) = M_{n_\pi}$ or $\pi(I) = \{0\}$.
\end{proof}
Given the duality between right invariant subspaces of $L^\infty(\G)$ and left ideals of $L^1(\G)$ observed in Section $2.3$, we immediately have that for any weak$^*$ closed right invariant subspace $X$ of $L^\infty(\G)$ that there exists a hull $E$ such that
$$I(E)^\perp \subseteq X\subseteq j(E)^\perp.$$
Then, a natural question is, can we describe $I(E)^\perp$ and $j(E)^\perp$ explicitly in terms of $E$? It turns out we can, and the answer to this question will be important for us when we characterize the CQGs such that every hull is a set of synthesis.

Given a hull $E$, we will denote
$$\Pol(\G,E) = \{ u^\pi_{\xi,\eta} :  \eta\in E_\pi, \xi\in\fH_\pi ~ \pi\in Irr(\G)\},$$
where $u^\pi_{\xi,\eta} = (\id\otimes w_{\eta,\xi})(U^\pi)$ and $w_{\eta,\xi}(T) = \langle T\eta,\xi\rangle$ for $T\in M_{n_\pi}$. Then, we will denote
$$C^r(\G, E) = \overline{\Pol(\G,E)}^{||\cdot||_r}, ~ C^u(\G,E) = \overline{\Pol(\G,E)}^{||\cdot||_u}, ~ \text{and} ~L^\infty(\G,E) = \overline{C^r(\G,E)}^{wk*}.$$
Recall from Section $2.2$ that $\hat{x} = h_\G\cdot x \in L^1(\G)$.
\begin{prop}\label{Annj(E)}
    Let $\G$ be a CQG and $E$ a hull. Then
    \begin{enumerate}
        \item \begin{align*}
        j(E)^\perp &= \overline{\{x\in L^\infty(\G) : \mathrm{im}(\pi(f\circ S_\G))\subseteq \ker(\pi(\widehat{x})), ~ f\in I(E), ~ \pi\in Irr(\G)\}}^{wk*}
        \\
        &= \bigcap_{f\in I(E)} \ker((\mathrm{id}\otimes f)\Delta_\G);
    \end{align*}
    \item and $I(E)^\perp = L^\infty(\G,E) = \overline{\{u^\pi_{\xi,\eta} : \pi(f)\xi\neq 0, ~f\in I(E), \pi\in Irr(\G), \xi\in E_\pi\}}^{wk*}$.
    \end{enumerate}
\end{prop}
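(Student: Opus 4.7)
The plan is to prove (1) by tracing the convolution identity $\pi(\widehat{f*x}) = \pi(\widehat{x})\pi(f\circ S_\G)$ together with ideal properties of $\widehat{\Pol(\G)}$, and to prove (2) by the bipolar theorem. Before anything else, I would note that the middle set in (1) (before weak-$*$ closure) already coincides with $\bigcap_{f \in I(E)}\ker((\id\otimes f)\Delta_\G)$. The chain runs: $x \in \bigcap_f \ker((\id \otimes f)\Delta_\G)$ iff $f*x = 0$ for all $f \in I(E)$, iff $\widehat{f*x} = 0$, iff $\pi(\widehat{f*x}) = 0$ for all $\pi$, iff $\mathrm{im}(\pi(f \circ S_\G)) \subseteq \ker(\pi(\widehat{x}))$. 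Here I use faithfulness of $h_\G$ on $L^\infty(\G)$ (to go from $\widehat{f*x} = 0$ to $f*x = 0$), injectivity of $\lambda_\G = \oplus_\pi \pi$ on $L^1(\G)$, and the convolution formula. The resulting intersection is weak-$*$ closed as an intersection of kernels of weak-$*$ continuous maps.

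For $j(E)^\perp = \bigcap_{f \in I(E)}\ker((\id \otimes f)\Delta_\G)$, I would first check that $\widehat{\Pol(\G)}$ is a two-sided ideal in $L^1(\G)$ (since $\lambda_\G(\widehat{\Pol(\G)}) = c_{00}(\widehat{\G})$ is an ideal in $c_0(\widehat{\G})$) and norm-dense in $L^1(\G)$ (by Hahn--Banach: any separating $x \in L^\infty(\G)$ would satisfy $h_\G(yx) = 0$ on all of $L^\infty(\G)$ via weak-$*$ density of $\Pol(\G)$, contradicting faithfulness of $h_\G$). The $(\subseteq)$ direction runs: for $x \in j(E)^\perp$ and $f \in I(E)$, each $g \in \widehat{\Pol(\G)}$ yields $g*f \in \widehat{\Pol(\G)} \cap I(E) = j(E)$, whence $g(f*x) = (g*f)(x) = 0$, and density forces $f*x = 0$. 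For $(\supseteq)$, fix $g \in j(E)$ and write $g = \widehat{y}$ with $y \in \Pol(\G)$; the convolution identity shows $\pi(\widehat{g*x})$ has finite $\pi$-support, so $g*x \in \Pol(\G)$, and the counit identity $g(x) = \epsilon_\G(g*x)$ (extended from $\Pol(\G)$ to $L^\infty(\G)$ by weak-$*$ approximation inside a finite-dimensional subspace where $\epsilon_\G$ is continuous) gives $g(x) = 0$.

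For part (2), $I(E)^\perp = L^\infty(\G, E)$ follows from the bipolar theorem. The inclusion $L^\infty(\G, E) \subseteq I(E)^\perp$ is immediate since $f(u^\pi_{\xi, \eta}) = \langle \pi(f)\eta, \xi\rangle = 0$ whenever $f \in I(E)$ and $\eta \in E_\pi$. Conversely, any $f \in L^1(\G)$ annihilating $L^\infty(\G, E)$ satisfies $\langle \pi(f)\eta, \xi\rangle = 0$ for all $\eta \in E_\pi,\ \xi \in \fH_\pi$, forcing $\pi(f)(E_\pi) = 0$ and hence $f \in I(E)$; so $I(E)^\perp \subseteq L^\infty(\G, E)$. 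For the second equality, the identification $E_\pi = \bigcap_{f \in I(E)}\ker\pi(f)$ from the proof of Proposition \ref{idealsinL^1} converts the side condition ``$\pi(f)\xi \neq 0$ for some $f \in I(E)$'' into a condition on $\xi$ relative to $E_\pi$, and a short linear-algebra argument shows the restricted collection of matrix coefficients has the same weak-$*$ closed span as the full generating set $\Pol(\G, E)$.

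The main technical obstacle is the extension of the counit identity $g(x) = \epsilon_\G(g*x)$ to $x \in L^\infty(\G)$. The counit $\epsilon_\G$ is only defined on $\Pol(\G)$ (it extends to $C^u(\G)$ but not in general to $L^\infty(\G)$), so the identity must be bootstrapped: the Fourier-support computation via $\pi(\widehat{g*x}) = \pi(\widehat{x})\pi(g \circ S_\G)$ places $g*x$ inside a finite-dimensional subspace of $\Pol(\G)$, where weak-$*$ and norm topologies coincide, so weak-$*$ approximation of $x$ by elements of $\Pol(\G)$ passes through the identity cleanly.
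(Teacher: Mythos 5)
Your proposal is correct, and its skeleton matches the paper's: part (1) rests on the convolution formula $\pi(\widehat{f*x}) = \pi(\widehat{x})\pi(f\circ S_\G)$ together with the ideal property and density of $\widehat{\Pol(\G)} = \lambda_\G^{-1}(c_{00}(\widehat{\G}))$, and part (2) is the bipolar theorem exactly as in the paper. Two technical choices differ, and the comparison is instructive. First, you identify the middle set of (1) with $\bigcap_{f\in I(E)}\ker((\id\otimes f)\Delta_\G)$ up front and observe that it is already weak$^*$ closed; the paper instead first replaces the condition ``$f\in I(E)$'' by ``$f\in j(E)$'' inside the defining set, using that $\sigma(I(E))=\sigma(j(E))$ for every finite-dimensional $*$-representation $\sigma$, and only then matches the result with $j(E)^\perp$. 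Both routes work; yours makes the redundancy of the closure bar explicit. Second, and more substantively: for the inclusion $\bigcap_{f\in I(E)}\ker((\id\otimes f)\Delta_\G)\subseteq j(E)^\perp$ you route through the counit identity $g(x)=\epsilon_\G(g*x)$ and must bootstrap it from $\Pol(\G)$ to $L^\infty(\G)$, which you correctly flag as the main obstacle. Your bootstrap is sound --- the elements $g*x_\alpha$ and $g*x$ all have Fourier support inside the fixed finite set $\{\pi : \pi(g\circ S_\G)\neq 0\}$, hence lie in a common finite-dimensional block where $\epsilon_\G$ is continuous --- but the paper avoids the counit altogether: for $g\in j(E)$ it takes the local unit $\epsilon_{\oplus_i\pi_i}=\lambda_\G^{-1}(1_{\oplus_i M_{n_{\pi_i}}})$ supported on the finite Fourier support of $g$, so that $\epsilon_{\oplus_i\pi_i}*g=g$ and $g(x)=\epsilon_{\oplus_i\pi_i}(g*x)=0$ with no continuity question arising. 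That trick would shorten your argument and is worth adopting. Finally, for the second displayed set in part (2), both you and the paper treat the passage from $\Pol(\G,E)$ to the restricted family of coefficients $u^\pi_{\xi,\eta}$ rather briskly; the paper's own proof says no more than ``the rest is clear,'' so no gap is being charged to you there.
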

\begin{rem}\label{antipode}
    Because the symbol $f\circ S_\G$ is defined only for $f\in L^1_\#(\G)$, an explanation of the notation in Proposition \ref{Annj(E)} is in order. We set
    $$\pi(f\circ S_\G) := (f\otimes \id)(S_\G\otimes \id)U^\pi$$
    which is defined because $S_\G|_{\mathrm{Pol}(\G)} : \Pol(\G)\to \Pol(\G)$ is a bijection.
\end{rem}
\begin{proof}
    $1.$ We will first notice
    \begin{align*}
        &\overline{\{x\in L^\infty(\G) : \mathrm{im}(\pi(f\circ S_\G))\subseteq \ker(\pi(\widehat{x})), ~ f\in I(E), \pi\in Irr(\G)\}}^{wk*}
        \\
        &= \overline{\{x\in L^\infty(\G) : \mathrm{im}(\pi(f\circ S_\G))\subseteq \ker(\pi(\widehat{x})), ~ f\in j(E), \pi\in Irr(\G)\}}^{wk*}
    \end{align*}
    and then will show
    $$j(E)^\perp = \overline{\{x\in L^\infty(\G) : \mathrm{im}(\pi(f\circ S_\G))\subseteq \ker(\pi(\widehat{x})), ~ f\in j(E), \pi\in Irr(\G)\}}^{wk*}.$$
    Accordingly, suppose $x\in L^\infty(\G)$ satisfies $\im(\pi(h\circ S_\G))\subseteq\ker(\pi(\widehat{x}))$ for all $h\in j(E)$.
    
    We will prove that for $f\in I(E)$, we can find $g\in j(E)$ such that $\pi(f\circ S_\G) = \pi(g\circ S_\G)$. Recall that $(S_\G\otimes\id)U^\pi = [S_\G(u_{i,j}^\pi)]_{i,j} = [(u_{j,i}^\pi)^*]_{i,j}$. As shown in the proof of Proposition \ref{idealsinL^1}, we have that $\sigma(I(E)) = \sigma(j(E))$ for every finite dimensional $*$-representation $\sigma$. The condition that $\sigma(f) = \sigma(g)$ is equivalent to having $f(u_{i,j}^\sigma) = g(u_{i,j}^\sigma)$ for every $1\leq i,j\leq n_\sigma$. Since $S_\G|_{\mathrm{Pol(\G)}}$ is a bijection and $$\Pol(\G) = \Span\{u_{i,j}^\pi : 1\leq i,j\leq n_\pi, \pi\in Irr(\G)\}$$
    we have $(u_{j,i})^* = \sum \alpha_{i,j}^{\sigma_k}u^{\sigma_k}_{i,j}$ for some $\sigma_1, \ldots, \sigma_l\in Irr(\G)$, where $l < \infty$ and $\alpha_{i,j}^{\sigma_k}\in \C$. Now choose $g\in j(E)$ such that $\oplus_{k=1}^l\sigma_k(f) = \oplus_{k=1}^l\sigma_k(g)$. Then,
    $$\pi(f\circ S_\G) = [f((u_{j,i}^\pi)^*)]_{i,j} = [g((u_{j,i}^\pi)^*)]_{i,j} = \pi(g\circ S_\G).$$
    So, for $f\in I(E)$, choose $g\in j(E)$ such that $\pi(f\circ S_\G) = \pi(g\circ S_\G)$. Then
    $$\pi(\widehat{x})\pi(f\circ S_\G) = \pi(\widehat{x})\pi(g\circ S_\G) = 0,$$
    as desired. The reverse containment is obvious.
    
    Moving on to the main part of the proof, let $x\in j(E)^\perp$. Then, for $f\in j(E)$,
    $$f*x = (\id\otimes f)\Delta_\G(x) = 0$$
    thanks to right invariance of $j(E)^\perp$. Therefore,
    \begin{align}
        0 = \pi(\widehat{f*x}) = \pi(\widehat{x})\pi(f\circ S_\G), \label{ComputeZero}
    \end{align}
    in other words, $\im(\pi(f\circ S_\G))\subseteq \ker(\pi(\widehat{x}))$.
    
    Conversely, take $x\in L^\infty(\G)$ such that $\im(\pi(f\circ S_\G))\subseteq \ker(\pi(\widehat{x}))$ for all $f\in j(E)$. Following Equation \ref{ComputeZero} in reverse we get $f*x = 0$. Since $f\in \lambda_\G^{-1}(c_{00}(\widehat{\G}))$, we can find $\pi_1,\ldots, \pi_n\in Irr(\G)$ so that $\pi(f) = 0$ if $\pi_i\neq \pi\in Irr(\G)$ for all $i$. Let $\epsilon_{\oplus_{i=1}^n\pi_i} = \lambda^{-1}_\G(1_{\oplus_{i=1}^n M_{n_{\pi_i}}})$ via the identification $\oplus_{i=1}^n M_{n_{\pi_i}}\subseteq c_{00}(\widehat{\G})$. Then
    $$\lambda_\G(\epsilon_{\oplus_{i=1}^n\pi_i}*f) = (\oplus_{i=1}^n\pi_i)(\epsilon_{\oplus_{i=1}^n\pi_i})(\oplus_{i=1}^n\pi_i)(f) = \lambda_\G(f),$$
    so $\epsilon_{\oplus_{i=1}^n\pi_i}*f = f$. Similarly, $f*\epsilon_{\oplus_{i=1}^n\pi_i} = f$. Therefore,
    $$0 = \epsilon_{\oplus_{i=1}^n\pi_i}(f*x) = f(x),$$
    as desired.
    
    Now we justify
    $$j(E)^\perp = \bigcap_{f\in I(E)} \ker((\id\otimes f)\Delta_\G).$$
    All of the work for this part of the claim has already been done. For $x\in j(E)^\perp$ and $f\in I(E)$, notice that we can repeat the steps of the converse above to get $f*x = 0$. Conversely, notice the forward implication above actually depended on having $f*x = 0$.
    
    2. For $\pi\in Irr(\G)$, pick an ONB $\{e_j^\pi\}$ by choosing an ONB for $E_\pi$ and then extending it to $\fH_\pi$. Then for $f\in I(E)$ we have
    $$0 = \pi(f)(E_\pi) = [f(u_{i,j}^{E_\pi})](E_\pi)$$
    if and only if $f(u_{i,j}^\pi) = 0$ for every $e_j^\pi\in E_\pi$. The rest is clear.
\end{proof}
Using the duality observed between right invariant subspaces of $L^\infty(\G)$ and left ideals in $L^1(\G)$ in Section $2.3$, Proposition \ref{idealsinL^1} and the above proposition gives us an equivalent statement in $L^\infty(\G)$.
\begin{cor}
    Let $\G$ be a CQG. If $X\trianglelefteq_l L^\infty(\G)$ is a right invariant subspace, then there exists a hull $E$ such that
    $$L^\infty(\G,E)\subseteq X\subseteq \bigcap_{j\in I(E)}\ker((\mathrm{id}\otimes f)\Delta_\G).$$
\end{cor}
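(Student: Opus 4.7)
The plan is to derive this corollary directly from the duality between weak$^*$-closed right invariant subspaces of $L^\infty(\G)$ and closed left ideals of $L^1(\G)$ (established in Section 2.3), together with Proposition \ref{idealsinL^1} and Proposition \ref{Annj(E)}. Essentially, the corollary is just the dual reformulation of the sandwich $j(E)\subseteq I\subseteq I(E)$.

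First, I would set $I := X_\perp \subseteq L^1(\G)$. By the Proposition in Section 2.3, $I$ is a norm-closed left ideal of $L^1(\G)$, and $X = I^\perp$ because $X$ is weak$^*$-closed. By Proposition \ref{idealsinL^1}, there exists a hull $E = (E_\pi)_{\pi\in Irr(\G)}$ such that
\begin{equation*}
j(E)\ \subseteq\ I\ \subseteq\ I(E).
\end{equation*}
Taking pre-annihilators (which reverses inclusions) yields
\begin{equation*}
I(E)^\perp\ \subseteq\ I^\perp\ =\ X\ \subseteq\ j(E)^\perp.
\end{equation*}

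Next, I would invoke Proposition \ref{Annj(E)} to identify both endpoints. Part (2) of that proposition gives $I(E)^\perp = L^\infty(\G,E)$, while part (1) gives
\begin{equation*}
j(E)^\perp\ =\ \bigcap_{f\in I(E)}\ker\bigl((\id\otimes f)\Delta_\G\bigr).
\end{equation*}
Substituting these identifications into the chain of inclusions above produces exactly
\begin{equation*}
L^\infty(\G,E)\ \subseteq\ X\ \subseteq\ \bigcap_{f\in I(E)}\ker\bigl((\id\otimes f)\Delta_\G\bigr),
\end{equation*}
which is the desired conclusion.

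There is no real obstacle here: the only subtle point is making sure we apply the correct side of the duality $\trianglelefteq_r L^\infty(\G) \leftrightarrow \trianglelefteq_l L^1(\G)$ (right invariant weak$^*$-closed subspaces pair with closed left ideals, which is the setting of Proposition \ref{idealsinL^1}), and that $X$ being weak$^*$-closed guarantees $(X_\perp)^\perp = X$ so we can pass the hull extracted from $I = X_\perp$ back to a statement about $X$ itself. Everything else is bookkeeping applying the two earlier results.
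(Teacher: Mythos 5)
Your proof is correct and follows essentially the same route as the paper: the author likewise obtains the hull by applying Proposition \ref{idealsinL^1} to $X_\perp$, reverses the inclusions via the annihilator duality of Section 2.3, and then substitutes the identifications $I(E)^\perp = L^\infty(\G,E)$ and $j(E)^\perp = \bigcap_{f\in I(E)}\ker((\id\otimes f)\Delta_\G)$ from Proposition \ref{Annj(E)}. Your explicit attention to which side of the duality applies and to the bipolar identity $(X_\perp)^\perp = X$ is exactly the bookkeeping the paper leaves implicit.
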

\begin{proof}
    This follows immediately from Proposition \ref{Annj(E)} and Proposition \ref{idealsinL^1}.
\end{proof}
Before getting to the main theorem, we still need to think about the singly generated ideals.
\begin{lem}\label{SinglyGenIdeals}
    Let $\G$ be a CQG. Fix $f\in L^1(\G)$ and let $E$ be the hull of $\G$ associated with the closed principal left ideal $\overline{L^1(\G)*f}$. The following hold:
    \begin{enumerate}
        \item we have $E_\pi = \ker(\pi(f))$ for each $E_\pi\in E$;
        \item $f\in I(E)$;
        \item if $E$ is a set of synthesis, then $f\in \overline{L^1(\G)*f}$;
        \item $\overline{L^1(\G)*f}^\perp = \ker((\id\otimes f)\Delta_\G)$;
    \end{enumerate}
\end{lem}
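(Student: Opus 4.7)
The plan is to treat the four parts essentially in order, as parts (2) and (3) follow almost immediately from (1), and (4) is a short separate computation via convolution.

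For part (1), I would recall from the proof of Proposition \ref{idealsinL^1} that the associated hull $E$ is determined by $\pi(I) = \{A \in M_{n_\pi} : A(E_\pi) = 0\}$ via Lemma \ref{leftIdealsMn}. So the task is to compute $\pi(\overline{L^1(\G)*f})$. Since $\pi : L^1(\G) \to M_{n_\pi}$ is surjective (as observed in Proposition \ref{idealsinL^1}) and $\pi$ is a continuous homomorphism, one has $\pi(L^1(\G)*f) = M_{n_\pi}\pi(f)$, which is already a finite-dimensional (hence closed) subspace of $M_{n_\pi}$; continuity of $\pi$ together with closedness then yields $\pi(\overline{L^1(\G)*f}) = M_{n_\pi}\pi(f)$. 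The remaining point is the linear-algebra fact $M_{n_\pi}\pi(f) = \{A \in M_{n_\pi} : A|_{\ker \pi(f)} = 0\}$: the inclusion $\subseteq$ is obvious, and for $\supseteq$, given $A$ vanishing on $\ker \pi(f)$ one defines $B$ on $\mathrm{im}(\pi(f))$ by $B(\pi(f)\xi) = A\xi$ (well-defined since $\pi(f)\xi = 0$ forces $A\xi = 0$), then extends $B$ arbitrarily to $\fH_\pi$, obtaining $A = B\pi(f)$. Matching with $\pi(I) = \{A : A(E_\pi) = 0\}$ forces $E_\pi = \ker \pi(f)$.

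Part (2) is then immediate: with $E_\pi = \ker \pi(f)$, we have $\pi(f)(E_\pi) = 0$ for every $\pi \in \mathrm{Irr}(\G)$, which is exactly the statement $f \in I(E)$. For part (3), suppose $E$ is a set of synthesis, so $I(E) = \overline{j(E)}$. Since $I := \overline{L^1(\G)*f}$ is norm closed and $j(E) \subseteq I$, passing to closures gives $I(E) = \overline{j(E)} \subseteq I$; combined with the reverse containment $I \subseteq I(E)$ from Proposition \ref{idealsinL^1}, we conclude $I = I(E)$. Then by part (2), $f \in I(E) = \overline{L^1(\G)*f}$, as claimed.

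For part (4), I would just unwind the definition of the annihilator against the convolution formula $(g*f)(x) = (g \otimes f)\Delta_\G(x) = g((\id \otimes f)\Delta_\G(x))$. An element $x \in L^\infty(\G)$ lies in $\overline{L^1(\G)*f}^\perp$ iff $(g*f)(x) = 0$ for every $g \in L^1(\G)$, which by the above reads $g((\id \otimes f)\Delta_\G(x)) = 0$ for all $g \in L^1(\G)$, equivalently $(\id \otimes f)\Delta_\G(x) = 0$. So $\overline{L^1(\G)*f}^\perp = \ker((\id \otimes f)\Delta_\G)$; one can note in passing that the latter kernel is automatically weak$^*$ closed by weak$^*$-weak$^*$ continuity of $(\id \otimes f)\Delta_\G$, matching the fact that preannihilators are always weak$^*$ closed.

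I do not expect a substantive obstacle: the only step with any real content is the matrix-algebra identification $M_{n_\pi}\pi(f) = \{A : A|_{\ker \pi(f)} = 0\}$ in part (1), and the mild care that $\pi(I)$ agrees with $\pi(L^1(\G)*f)$ because the latter is finite-dimensional and hence closed.
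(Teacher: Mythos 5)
Your proposal is correct and follows essentially the same route as the paper: part (1) rests on $\pi(g*f)=\pi(g)\pi(f)$ (the paper states this as a one-line hint, while you fill in the identification $\pi(\overline{L^1(\G)*f})=M_{n_\pi}\pi(f)=\{A:A|_{\ker\pi(f)}=0\}$), parts (2) and (3) are the same immediate deductions, and part (4) is the same unwinding of $(g*f)(x)=g\bigl((\id\otimes f)\Delta_\G(x)\bigr)$.
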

\begin{proof}
    1. This follows easily from the fact $\pi(g*f)(E_\pi) = \pi(g)\pi(f)(E_\pi) = 0$ for each $g\in L^1(\G)$.
    
    2. This follows immediately by definition of $I(E)$ and from 1.
    
    3. If $E$ is a set of synthesis, then from 1., 2., and Proposition \ref{idealsinL^1},
    $$\overline{j(E)} = \overline{L^1(\G)*f} = I(E)\ni f.$$
    4. If $f*x = 0$, then $g*f(x) = g(f*x) = 0$ for each $f\in L^1(\G)$, that is, $x\in (\overline{L^1(\G)*f})^\perp$. Conversely, if $x\in (\overline{L^1(\G)*f})^\perp$, then $0 = g*f(x) = g(f*x)$ for all $g\in L^1(\G)$, which implies $f*x = 0$.
\end{proof}
\begin{thm}\label{StrcClosedLeftIdeals}
    Let $\G$ be a CQG. Then every hull is a set of synthesis if and only if $f\in \overline{L^1(\G)*f}$ for all $f\in L^1(\G)$.
\end{thm}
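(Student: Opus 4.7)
The plan is to split the biconditional into its two implications and reduce each to earlier results in the section. The forward implication is essentially immediate from Lemma \ref{SinglyGenIdeals}: assuming every hull is a set of synthesis, fix $f \in L^1(\G)$ and consider the closed principal left ideal $\overline{L^1(\G)*f}$. Parts (1) and (2) of that lemma identify its hull as $E$ with $E_\pi = \ker(\pi(f))$ and guarantee $f \in I(E)$; since $E$ is a set of synthesis by hypothesis, part (3) of the lemma delivers $f \in \overline{L^1(\G)*f}$.

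For the converse, I would avoid constructing approximants in $j(E)$ directly and instead pass to annihilators. Fix a hull $E$; since $j(E) \subseteq I(E)$ and $I(E)$ is closed, the bipolar theorem reduces $\overline{j(E)} = I(E)$ to proving $j(E)^\perp \subseteq I(E)^\perp$. By Proposition \ref{Annj(E)}(1), the condition $x \in j(E)^\perp$ is equivalent to
$$f * x \;=\; (\id \otimes f)\Delta_\G(x) \;=\; 0 \quad \text{for every } f \in I(E).$$

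The remaining step is a one-line computation. Given such an $x$ and any $f \in I(E)$, apply the hypothesis (property left $D_\infty$ for $\widehat{\G}$) to pick $g_n \in L^1(\G)$ with $g_n * f \to f$ in norm. Then, using continuity of the pairing with $x$ and the identity $(g_n * f)(x) = (g_n \otimes f)\Delta_\G(x) = g_n(f*x)$,
$$f(x) \;=\; \lim_n (g_n * f)(x) \;=\; \lim_n g_n(f*x) \;=\; 0,$$
so $x \in I(E)^\perp$, as required. I do not foresee a genuine obstacle here: the compact-quantum-group-specific input lies entirely in Proposition \ref{Annj(E)} and Lemma \ref{SinglyGenIdeals}, both already established, so the proof reduces to the correct choice of dual formulation together with a routine convolution manipulation.
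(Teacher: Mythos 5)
Your proposal is correct and follows essentially the same route as the paper: the forward direction via Lemma \ref{SinglyGenIdeals}, and the converse via Proposition \ref{Annj(E)}(1) together with the computation $f(x)=\lim_n (g_n*f)(x)=\lim_n g_n(f*x)=0$. The only cosmetic difference is that you phrase the dual step as $j(E)^\perp\subseteq I(E)^\perp$ while the paper shows $I(E)\subseteq\bigl[\bigcap_{f\in I(E)}\ker((\id\otimes f)\Delta_\G)\bigr]_\perp$, which is the same statement after one application of the bipolar theorem.
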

\begin{proof}
    If we assume every hull is a set of synthesis, then in particular, from Lemma \ref{SinglyGenIdeals} we have $f\in \overline{L^1(\G)*f}$ for every $f\in L^1(\G)$. Conversely, because of Proposition \ref{Annj(E)}, all we need to show is $I(E)\subseteq [\bigcap_{f\in I(E)}\ker((\id\otimes f)\Delta_\G)]_\perp$. So, take $f\in I(E)$ and let $x\in L^\infty(\G)$ satisfy $f*x = 0$. Find a net $(g_i)\subseteq L^1(\G)$ such that $g_i*f\to f$. Then
    $$0 = g_i(f*x) = g_i*f(x)\to f(x).$$
\end{proof}
From Proposition \ref{idealsinL^1}, the hull of a closed left ideal $I$ in $L^1(\G)$ is a set of synthesis if and only if $I$ is the unique closed left ideal corresponding to $E$. So, when every $E$ is a set of synthesis, we have a description of every closed left ideal of $L^1(\G)$ in terms of the hull $E$. To be explicit, from Theorem \ref{StrcClosedLeftIdeals} we immediately conclude the following.
\begin{cor}\label{leftIdealsCor}
    Let $\G$ be a CQG such that $f\in \overline{L^1(\G)*f}$ for all $f\in L^1(\G)$. The closed left ideals of $L^1(\G)$ are of the form $I(E)$ for some hull $E$.
\end{cor}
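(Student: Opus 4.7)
The plan is to assemble this corollary directly from Proposition \ref{idealsinL^1} and Theorem \ref{StrcClosedLeftIdeals}; there is essentially no new content beyond recognizing that the hypothesis turns the containments of Proposition \ref{idealsinL^1} into equalities for closed ideals. Fix a closed left ideal $I \trianglelefteq L^1(\G)$. First I would apply Proposition \ref{idealsinL^1} to produce a hull $E = (E_\pi)_{\pi\in Irr(\G)}$ satisfying the sandwich $j(E) \subseteq I \subseteq I(E)$. Since $I$ is norm closed by assumption, passing to the closure on the left gives $\overline{j(E)} \subseteq I \subseteq I(E)$.

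Next I would invoke the hypothesis $f \in \overline{L^1(\G)*f}$ for every $f \in L^1(\G)$, which by Theorem \ref{StrcClosedLeftIdeals} is equivalent to every hull being a set of synthesis. In particular, our $E$ satisfies $\overline{j(E)} = I(E)$. Substituting this equality into the sandwich forces $I(E) \subseteq I \subseteq I(E)$, so $I = I(E)$, as desired.

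The only step requiring any care is making sure that the hull $E$ supplied by Proposition \ref{idealsinL^1} is the same object to which the synthesis criterion of Theorem \ref{StrcClosedLeftIdeals} is applied; this is automatic because Theorem \ref{StrcClosedLeftIdeals} asserts the synthesis property uniformly over all hulls. There is no genuine obstacle here: the corollary is really a bookkeeping statement extracting the uniqueness half of the hull/ideal correspondence under the property left $D_\infty$ assumption on $\widehat{\G}$.
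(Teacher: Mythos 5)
Your proof is correct and is exactly the argument the paper intends: sandwich the closed ideal via Proposition \ref{idealsinL^1}, then collapse the sandwich using the synthesis property of every hull supplied by Theorem \ref{StrcClosedLeftIdeals}. No further comment is needed.
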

In light of Theorem \ref{StrcClosedLeftIdeals}, we make the following definition.
\begin{defn}
    We say a LCQG $\G$ has {\bf Ditkin's left property at infinity (or property left $D_\infty$)}, if $f\in \overline{L^1(\widehat{\G})*f}$ for every $f\in L^1(\widehat{\G})$.
\end{defn}
\begin{rem}
    The reason property left $D_\infty$ is apparently a property of $\widehat{\G}$ instead of $\G$ is so that the definition generalizes the definition of property $D_\infty$ in the case where $\G = G$ is a locally compact group (see \cite{A20}). In that case, property $D_\infty$ is regarded as a property of the locally compact group $G$ instead of its dual $\widehat{G}$. Note the similarity of this convention with the definitions of weak amenability and the AP (Section $2.4$).
\end{rem}
\subsection{Weak$^*$ Closed Left Ideals of Measure Algebras}
The main result of this subsection is that we achieve a characterization of the weak$^*$ closed ideals of the measure algebra of a coamenable CQG. Essentially, we will show White's techniques \cite{Wh18} generalize to the setting of CQGs. Before getting to this, however, we will begin by discussing some more general things about ideals of measure algebras of (not necessarily coamenable) CQGs.

First note, given a hull $E$ of a CQG $\G$, we have the closed left ideal $I(E)$ of $L^1(\G)$. Then, using that $L^1(\G)\subseteq M^r(\G), M^u(\G)$ isometrically as an ideal, we have the weak$^*$ closed left ideals $\overline{I(E)}^{wk*}\subseteq M^r(\G)$ and $\overline{I(E)}^{wk*}\subseteq M^u(\G)$. As we will see shortly, for coamenable $\G$, this construction gives us all weak$^*$ closed left ideals of $M(\G)$.

Because we have the embedding $\Pol(\G)\subseteq C^u(\G)$, we can immediately extend $\pi\in Irr(\G)$ to a representation $\pi : M^u(\G)\to M_{n_\pi}$ by setting
$$\pi(\mu) = (\mu\otimes\id)(U^\pi) = [\mu(u_{i,j}^\pi)].$$
With this in hand, given a hull $E$, we will define
$$I^u(E) = \{\mu\in M^u(\G) : \pi(\mu)(E_\pi) = 0, ~ E_\pi\in E\}$$
and
$$I^r(E) = \{\mu\in M^r(\G) : \pi(\mu)(E_\pi) = 0, ~ E_\pi\in E\},$$
which are both easily checked to be weak$^*$ left closed ideals in $M^u(\G)$ and $M^r(\G)$ respectively. We also have the following.
\begin{prop}
    Let $\G$ be a CQG. Then $I^u(E)= C^u(\G,E)^\perp$ and $I^r(E) = C^r(\G,E)^\perp$.
\end{prop}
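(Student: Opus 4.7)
The plan is to reduce both equalities to a single coefficient computation. For $\mu \in M^u(\G)$ (or $M^r(\G)$) and a matrix coefficient $u^\pi_{\xi,\eta} = (\id \otimes \omega_{\eta,\xi})(U^\pi)$, I would first unpack
\[
\mu(u^\pi_{\xi,\eta}) = \mu\bigl((\id \otimes \omega_{\eta,\xi})(U^\pi)\bigr) = \omega_{\eta,\xi}\bigl((\mu \otimes \id)(U^\pi)\bigr) = \omega_{\eta,\xi}(\pi(\mu)) = \langle \pi(\mu)\eta, \xi\rangle,
\]
using the definition $\pi(\mu) = (\mu \otimes \id)(U^\pi)$ that was just given, and the fact that slicing commutes. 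This is the key identity; everything else is bookkeeping.

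Next I would observe that $\mu \in \Pol(\G,E)^\perp$ exactly when $\langle \pi(\mu)\eta, \xi\rangle = 0$ for every $\pi \in Irr(\G)$, every $\eta \in E_\pi$, and every $\xi \in \fH_\pi$. Since $\xi$ ranges over all of $\fH_\pi$, this is equivalent to $\pi(\mu)\eta = 0$ for all $\eta \in E_\pi$, i.e.\ $\pi(\mu)(E_\pi) = 0$ for every $\pi \in Irr(\G)$. This is precisely the defining condition for $\mu$ to lie in $I^u(E)$ (respectively $I^r(E)$).

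Finally, I would note that since the duality pairing between $M^u(\G)$ and $C^u(\G)$ is continuous in the $\|\cdot\|_u$ norm on the second variable, the annihilator of $\Pol(\G,E)$ coincides with the annihilator of its norm closure $C^u(\G,E)$; likewise for the reduced case with $\|\cdot\|_r$. Combining this with the previous paragraph gives
\[
I^u(E) = \Pol(\G,E)^\perp = C^u(\G,E)^\perp \quad \text{and} \quad I^r(E) = \Pol(\G,E)^\perp = C^r(\G,E)^\perp,
\]
where in the second equality one uses that $\Pol(\G,E) \subseteq C^r(\G)$ via the reducing morphism. There is no real obstacle here; the only point requiring minor care is verifying that the slice identity $\mu((\id \otimes \omega)(U^\pi)) = \omega(\pi(\mu))$ is legitimate, which follows from the fact that $U^\pi$ sits in $M^u(\G)^* \otimes M_{n_\pi}$ (equivalently, has entries in $\Pol(\G) \subseteq C^u(\G)$), so both slices are well defined and agree on elementary tensors before extending by linearity.
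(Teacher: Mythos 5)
Your proof is correct and follows essentially the same route as the paper, which simply refers back to the coefficient computation in part (2) of Proposition \ref{Annj(E)}: pairing $\mu$ against matrix coefficients to translate $\pi(\mu)(E_\pi)=0$ into vanishing on $\Pol(\G,E)$, then passing to the norm closure. Your basis-free identity $\mu(u^\pi_{\xi,\eta})=\langle\pi(\mu)\eta,\xi\rangle$ is just the coordinate-free form of the paper's computation with an adapted orthonormal basis, so there is nothing further to add.
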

\begin{proof}
    The proof follows similarly to the analogous result in Proposition \ref{Annj(E)}.
\end{proof}
Now we will work towards the main result. The techniques involve exploiting the following sort of objects.
\begin{defn}
    A Banach algebra $A$ is {\bf compliant} if there exists a Banach space $X$ such that $M(A) = X^*$ and the maps $M(A)\to A$, $\mu\mapsto \mu a$ and $\mu\mapsto a\mu$, for $a\in A$, are weak*-weakly continuous (where weak$^* = \sigma(M(A), X)$).
\end{defn}
Recall, for coamenable $\G$ we have $M(\G) = M^l(L^1(\G))$ (cf. \cite{HNR11}).
\begin{prop}
    Let $\G$ be a coamenable LCQG. Then $L^1(\G)$ is compliant if and only if $\G$ is compact.
\end{prop}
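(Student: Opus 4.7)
The plan is to connect compliance of $L^1(\G)$ to the inclusion $L^1(\G) \ast L^\infty(\G) \subseteq C_0(\G)$, a relation which is controlled by the natural predual $C_0(\G)$ of $M(\G) = M(L^1(\G))$ and is possible only when $\G$ is compact.

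For the forward implication, assuming $\G$ is compact, I would take $X = C(\G)$ with $M(\G) = C(\G)^*$. Using the identities $(\mu \ast f)(x) = \mu(f \ast x)$ and $(f \ast \mu)(x) = \mu(x \ast f)$, the weak*-weak continuity of $\mu \mapsto \mu \ast f$ and $\mu \mapsto f \ast \mu$ reduces to verifying $f \ast x,\, x \ast f \in C(\G)$ for all $f \in L^1(\G)$ and $x \in L^\infty(\G)$. For $x \in \Pol(\G)$ this is immediate from the coproduct structure $\Delta_\G(\Pol(\G)) \subseteq \Pol(\G) \otimes \Pol(\G)$, and the general case follows by norm approximation using the standard CQG fact that the left and right uniformly continuous subalgebras of $L^\infty(\G)$ coincide with $C(\G)$ in the compact setting.

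For the converse, suppose $L^1(\G)$ is compliant with some predual $X$. Coamenability provides a bounded approximate identity $(e_i) \subseteq L^1(\G)$; by Banach--Alaoglu in $\sigma(M(\G), X)$, pass to a weak*($X$)-cluster point $\mu_0 \in M(\G)$. Weak*-weak continuity of $\mu \mapsto \mu \ast f$ combined with $\|e_i \ast f - f\|_1 \to 0$ forces $\mu_0 \ast f = f$ for all $f \in L^1(\G)$, so $\mu_0 = \epsilon_\G$ by uniqueness of the identity in $M(\G)$. Applying compliance now to $x = 1 \in L^\infty(\G)$ and $f \in L^1(\G)$ with $f(1) \neq 0$, the identity $f \ast 1 = f(1) \cdot 1$ shows that $\mu \mapsto \mu(1)$ is weak*($X$)-continuous on $M(\G)$, and I expect to conclude from this that the same functional is weak*($C_0(\G)$)-continuous, which is equivalent to $1 \in C_0(\G)$, i.e., to $\G$ being compact.

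The main obstacle lies in the last transfer step: bridging weak*($X$)-continuity of $\mu \mapsto \mu(1)$ to weak*($C_0(\G)$)-continuity amounts to comparing two preduals of $M(\G)$. One natural route is to invoke a uniqueness-of-predual result for the measure algebra of a coamenable CQG (analogous to the classical $M(G)$ case); alternatively, one can try to argue directly using that both preduals agree on $L^1(\G) \subseteq M(\G)$ and that bounded approximate identities converge to $\epsilon_\G$ in both weak*-topologies (as established by the cluster-point argument above), then bootstrap to all of $M(\G)$ via Krein-Smulian and weak* density of $L^1(\G)$.
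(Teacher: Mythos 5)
The direction ``compact $\Rightarrow$ compliant'' is essentially correct and is, in substance, the paper's argument with White's Lemma~5.7 unpacked: one reduces compliance with $X=C(\G)$ to the inclusion $L^1(\G)*L^\infty(\G)\subseteq C(\G)$, which is Runde's theorem for compact quantum groups (the paper invokes \cite[Theorem 2.3]{R08} together with Cohen factorization to get the equality $C(\G)=L^1(\G)*L^\infty(\G)$ and then cites \cite[Lemma 5.7]{Wh18}). Your cluster-point computation showing $\mu_0=\epsilon_\G$ is also fine, since $M(\G)\to M^l(L^1(\G))$ is injective.

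The converse, however, has a genuine gap, and it sits exactly where you flag it. Compliance only asserts the \emph{existence} of some predual $X$ of $M(A)=M(\G)$; nothing ties the topology $\sigma(M(\G),X)$ to $\sigma(M(\G),C_0(\G))$, even on bounded sets. Neither of your proposed bridges closes this. Uniqueness of predual fails for measure algebras: already $M(\Z)=\ell^1(\Z)$ is isometrically the dual of both $c_0(\Z)$ and $c(\Z)$, and the total-mass functional is weak$^*$-continuous for the second predual but not the first -- so ``some predual makes $\mu\mapsto\mu(1)$ weak$^*$-continuous'' carries no information about $C_0(\G)$. The Krein--\v{S}mulian bootstrap cannot work either, because the statement you are trying to reach (weak$^*(C_0(\G))$-continuity of $\mu\mapsto\mu(1)$, i.e.\ $1\in C_0(\G)$) is \emph{equivalent} to compactness; any argument for it must extract strictly more from compliance than the continuity of this one functional, and as written you have not located where the contradiction with non-compactness would come from. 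The paper's proof of this direction goes through the bidual instead: by \cite[Proposition 5.8(i)]{Wh18}, compliance forces $L^1(\G)$ to be an ideal in $L^1(\G)^{**}$ (the proof is your cluster-point argument, but aimed at the bidual: for $\Phi\in L^1(\G)^{**}$ take a bounded net $f_i\to\Phi$ weak$^*$, pass to a $\sigma(M(\G),X)$-cluster point $\mu$, and compliance identifies $\Phi\cdot f$ with $\mu*f\in L^1(\G)$); then \cite[Theorem 3.8]{R09} says a coamenable LCQG whose $L^1$-algebra is an ideal in its bidual must be compact. If you want to keep your total-mass strategy, you would need to first prove something of this ideal-in-bidual flavour; the functional $\mu\mapsto\mu(1)$ alone is not enough.
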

\begin{proof}
    According to \cite[Proposition 5.8 (i)]{Wh18}, compliance of $L^1(\G)$ implies it is an ideal in $L^1(\G)^{**}$. Then \cite[Theorem 3.8]{R09} implies $\G$ is compact. Conversely, if $\G$ is compact then, thanks to \cite[Theorem 2.3]{R08},
    $$C(\G) = L^1(\G)*L^\infty(\G) = L^\infty(\G)*L^1(\G)$$
    where we have used Cohen's factorization theorem. From this, \cite[Lemma 5.7]{Wh18} says $L^1(\G)$ is compliant.
\end{proof}
As we will see in the proof of the following theorem, as observed in \cite[Theorem 5.10]{Wh18}, the distinct advantage of compliance of $L^1(\G)$ is that the weak$^*$ closed left ideals in $M(\G)$ are exactly of the form $\overline{I}^{wk*}$ where $I$ is a left ideal in $L^1(\G)$. This allows us to use the structure of closed ideals in $L^1(\G)$ obtained in the previous section.
\begin{thm}\label{wk*IdealsMeasures}
    Let $\G$ be a coamenable CQG. The weak* closed left ideals of $M(\G)$ are of the form $I^u(E)$ for some hull $E$ of $\G$.
\end{thm}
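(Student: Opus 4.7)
The plan is to mirror White's strategy for compact groups, combining the structure theorem for closed left ideals of $L^1(\G)$ (Corollary \ref{leftIdealsCor}) with the compliance of $L^1(\G)$ established just above. Since $\G$ is coamenable, $L^1(\G)$ has a bounded approximate identity (bai) $(e_i)$, which in particular gives $f\in \overline{L^1(\G)*f}$ for every $f\in L^1(\G)$; hence by Theorem \ref{StrcClosedLeftIdeals} every hull is a set of synthesis, and every closed left ideal of $L^1(\G)$ has the form $I(E)$ for some hull $E$.

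Given a weak$^*$ closed left ideal $J$ of $M(\G)$, the plan is to set $I := J\cap L^1(\G)$, a closed left ideal of $L^1(\G)$, and then produce a hull $E$ with $I = I(E)$ by the above; the claim is $J = I^u(E)$. Two small observations make this go through: for $f\in L^1(\G)$, the two definitions of $\pi(f)$ (via the representation of $L^1(\G)$ and via its extension to $M^u(\G)$) agree, so $I^u(E)\cap L^1(\G) = I(E)$; and $I^u(E)$ is a weak$^*$ closed left ideal, using $\pi(\nu*\mu) = \pi(\nu)\pi(\mu)$. Since $L^1(\G)$ is an ideal in $M(\G)$, the element $e_i*\mu$ always lies in $L^1(\G)$, and lies in $I^u(E)$ exactly when $\mu$ does. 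For $\mu\in I^u(E)$ this places $e_i*\mu$ in $I(E) = I\subseteq J$, so if one can show $e_i*\mu\to\mu$ weak$^*$, then $\mu\in J$. Conversely, for $\mu\in J$ one has $e_i*\mu\in I(E)$, giving $\pi(e_i*\mu)(E_\pi) = 0$ for every $\pi$; finite-dimensionality of $\pi$ allows passing to the limit to conclude $\pi(\mu)(E_\pi) = 0$, so $\mu\in I^u(E)$.

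The crux is the weak$^*$ convergence $e_i*\mu\to\mu$ in $M(\G) = C(\G)^*$. For $x\in C(\G)$ one computes $(e_i*\mu)(x) = e_i(\mu*x)$ with $\mu*x := (\id\otimes\mu)\Delta_\G(x)\in C(\G)$, and the counit identity gives $\epsilon_\G^u(\mu*x) = \mu(x)$, so it suffices to show $e_i\to\epsilon_\G^u$ in the $\sigma(M(\G),C(\G))$-topology. This I would verify by a cluster-point argument: the bounded net $(e_i)$ has weak$^*$ cluster points, and any such cluster point $\nu$ satisfies $\nu*f = f$ for every $f\in L^1(\G)$, since compliance makes $\nu\mapsto \nu*f$ weak$^*$-weakly continuous while in norm $e_i*f\to f$. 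By weak$^*$ density of $L^1(\G)$ in $M(\G)$ and uniqueness of the identity, $\nu = \epsilon_\G^u$. This interaction between the bai and the $C(\G)^*$-weak$^*$ topology is the principal obstacle; the rest is a formal consequence of the structural results already built.
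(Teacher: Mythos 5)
Your proposal is correct and follows essentially the same route as the paper: reduce to closed left ideals of $L^1(\G)$ via the bai/compliance and the weak$^*$ approximation $e_i*\mu\to\mu$, then invoke Corollary \ref{leftIdealsCor} and the identity $I^u(E)\cap L^1(\G)=I(E)$. The only difference is that where the paper cites White's Theorem 5.10 as a black box for the correspondence $J\leftrightarrow J\cap L^1(\G)$, you reprove its content inline; the argument is sound.
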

\begin{proof}
    Since $L^1(\G)$ is compliant, because of \cite[Theorem 5.10]{Wh18}, the weak* closed left ideals of $M(\G)$ are of the form $\overline{I}^{wk*}$ for a closed left ideal $I$ of $L^1(\G)$. Now apply Corollary \ref{leftIdealsCor} to get that $I = I(E)$ for some hull $E$. By definition $L^1(\G)\cap I^u(E) = I(E)$, and so using \cite[Theorem 5.10]{Wh18}, we get $I^u(E) = \overline{I}^{wk*}$ as desired.
\end{proof}
\begin{cor}
    Let $\G$ be a coamenable CQG. The closed right invariant subspaces of $C(\G)$ are of the form $C(\G,E)$ for some hull $E$ of $\G$.
\end{cor}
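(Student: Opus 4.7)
The plan is to dualize Theorem \ref{wk*IdealsMeasures} using the correspondence from Section $2.3$ between weak$^*$ closed left ideals of $M(\G)$ and norm closed right invariant subspaces of $C(\G)$. Given such a norm closed right invariant subspace $X\subseteq C(\G)$, its annihilator $X^\perp\subseteq M(\G)$ is a weak$^*$ closed left ideal, and Theorem \ref{wk*IdealsMeasures} supplies a hull $E$ with $X^\perp = I^u(E)$.

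Next I would use coamenability of $\G$ to collapse the universal/reduced distinction. Since $C^u(\G) = C^r(\G) = C(\G)$ and $M^u(\G) = M^r(\G) = M(\G)$, the $\|\cdot\|_u$- and $\|\cdot\|_r$-closures of $\Pol(\G,E)$ agree, giving a single closed subspace $C(\G,E)\subseteq C(\G)$. The proposition immediately preceding Theorem \ref{wk*IdealsMeasures} identifies $I^u(E) = C^u(\G,E)^\perp$, which under coamenability reads $I^u(E) = C(\G,E)^\perp$.

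It now remains to take pre-annihilators. Since $X$ is norm closed in $C(\G)$, the bipolar theorem yields
\[
X \;=\; (X^\perp)_\perp \;=\; \bigl(C(\G,E)^\perp\bigr)_\perp \;=\; C(\G,E),
\]
which is the desired conclusion. As a byproduct this simultaneously verifies that every $C(\G,E)$ is indeed right invariant: $C(\G,E)^\perp = I^u(E)$ is a left ideal in $M(\G)$, and the Section $2.3$ correspondence forces its pre-annihilator to be a right invariant closed subspace of $C(\G)$.

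There is no serious obstacle here; the substantive content has already been packaged into Theorem \ref{wk*IdealsMeasures} and the identification $I^u(E) = C^u(\G,E)^\perp$. The corollary is a translation exercise between $M(\G)$ and $C(\G)$ via annihilators, with coamenability used only to ensure the universal and reduced objects coincide so that the hull $E$ produced on the measure-algebra side lands in the same subspace of $C(\G)$ that $X$ lives in.
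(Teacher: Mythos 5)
Your argument is correct and is essentially the paper's own proof, just written out: the paper likewise deduces the corollary from Theorem \ref{wk*IdealsMeasures} together with the identification $I^u(E) = C^u(\G,E)^\perp$ (via the Section $2.3$ annihilator correspondence and the bipolar theorem), with coamenability collapsing the universal and reduced pictures. No gaps.
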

\begin{proof}
    This follows from Theorem \ref{wk*IdealsMeasures} and Proposition \ref{Annj(E)}.
\end{proof}
\subsection{Ditkin's Property at Infinity and Examples}
Even for locally compact groups, property left $D_\infty$ is a rather opaque condition and, to our knowledge, there are no known examples of locally compact groups with property left $D_\infty$ \cite[Section 6.7]{KL19}. Recently a characterization of property left $D_\infty$ for locally compact co--groups has been obtained by Andreou \cite{A20}. Using the techniques developed there, Andreou obtained a new proof that AP implies property left $D_\infty$ using techniques based around Fubini tensor products (a result which may also be read from \cite[Theorem 1.11]{KH94}). For this section, we will write down some basic equivalent formulations of property left $D_\infty$ (which were recorded by Andreou for locally compact groups). Then we will provide examples of CQGs with property left $D_\infty$.

We will say $x\in L^\infty(\G)$ satisfies {\bf condition $(H)$} if $x\in \overline{L^1(\G)*x}^{wk*}\subseteq L^\infty(\G)$.
\begin{prop}
    Let $\G$ be a CQG. TFAE:
    \begin{enumerate}
        \item $\widehat{\G}$ has property left $D_\infty$;
        \item every $x\in L^\infty(\G)$ satisfies condition $(H)$;
        \item for $x\in L^\infty(\G)$ and $f\in L^1(\G)$, $f*x = 0$ implies $f(x) = 0$;
        \item and for all $X\trianglelefteq_r L^\infty(\G)$ we have $x*f\in X$ for all $f\in L^1(\G)$ if and only if $x\in X$.
    \end{enumerate}
\end{prop}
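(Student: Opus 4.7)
The plan is to establish $(1)\Leftrightarrow(3)\Leftrightarrow(4)$ by direct bipolar duality between $L^1(\G)$ and $L^\infty(\G)$, and then to deduce $(2)\Leftrightarrow(3)$ via the unitary antipode $R_\G$, which will be used to swap the two directions of convolution. Conditions $(1)$, $(3)$, $(4)$ all involve the left action $f*x$ and will line up transparently; only $(2)$---once dualized through the bipolar theorem---sits on the opposite side, involving $x*\varphi=0$, so $R_\G$ is needed precisely to bridge that asymmetry.

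For $(1)\Leftrightarrow(3)$, I would apply the bipolar theorem: $f\in\overline{L^1(\G)*f}$ iff the annihilator $(\overline{L^1(\G)*f})^\perp$ in $L^\infty(\G)$ is contained in $\ker f$, and by Lemma~\ref{SinglyGenIdeals}(4) this annihilator is exactly $\{x\in L^\infty(\G):f*x=0\}$. For $(3)\Rightarrow(4)$, I would take $X\trianglelefteq_r L^\infty(\G)$ and $x\in L^\infty(\G)$ with $x*f\in X$ for all $f\in L^1(\G)$; for $\varphi\in X_\perp$, a direct calculation from the definitions gives $\varphi(x*f)=(f\otimes\varphi)\Delta_\G(x)=f(\varphi*x)$, so letting $f$ vary forces $\varphi*x=0$ and $(3)$ then yields $\varphi(x)=0$, whence $x\in X$. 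For $(4)\Rightarrow(1)$, I would fix $f\in L^1(\G)$ and apply $(4)$ to $X=I^\perp$, where $I=\overline{L^1(\G)*f+\C f}$ is the smallest closed left ideal containing $f$. The identity $\varphi(y*g)=g(\varphi*y)$ dualizes the hypothesis of $(4)$ on this $X$ to the equality $(\overline{L^1(\G)*I})^\perp=I^\perp$, whence bipolar gives $\overline{L^1(\G)*I}^{\|\cdot\|}=I$. Finally, $L^1(\G)*(L^1(\G)*f+\C f)\subseteq L^1(\G)*f$ is immediate and continuity of the left action extends this to $L^1(\G)*I\subseteq\overline{L^1(\G)*f}$, so $f\in I\subseteq\overline{L^1(\G)*f}$.

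For $(2)\Leftrightarrow(3)$, I would first use bipolar duality in $(L^1(\G),L^\infty(\G))$: condition $(H)$ for every $x$ is equivalent to the statement that $\varphi(x)=0$ whenever $\varphi\in L^1(\G)$ annihilates $L^1(\G)*x$. The identity $\varphi(f*x)=f(x*\varphi)$ identifies such $\varphi$ as precisely those with $x*\varphi=0$, so $(2)$ becomes the reversed implication $(3')$: $x*\varphi=0\Rightarrow\varphi(x)=0$. To pass between $(3)$ and $(3')$, I would invoke the unitary antipode: $R_\G$ is a $*$-anti-automorphism of $L^\infty(\G)$ with $R_\G^2=\id$ and the standard relation $\Delta_\G\circ R_\G=\Sigma(R_\G\otimes R_\G)\Delta_\G$, from which a short Sweedler calculation yields the intertwining $R_\G(f*x)=R_\G(x)*(f\circ R_\G)$. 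The substitution $y=R_\G(x),\ g=f\circ R_\G$ (both bijections) then converts $(3)$ into $(3')$ and conversely.

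The main obstacle is exactly this last exchange of convolutions: without the antipode, $(2)$ and $(3)$ are formally distinct statements because the underlying actions run in opposite directions, and bridging them relies on the genuinely quantum-group input that $R_\G$ is involutive and intertwines $\Delta_\G$ with its flip. All the other implications are routine duality arguments via the Section~2.3 correspondence between weak$^*$ closed right invariant subspaces of $L^\infty(\G)$ and closed left ideals of $L^1(\G)$.
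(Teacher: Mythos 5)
Your proposal is correct, and all four conditions are handled soundly; the parts $(1\Leftrightarrow 3)$ (via Lemma~\ref{SinglyGenIdeals}(4) and Hahn--Banach) and $(3\Rightarrow 4)$ (via $\varphi(x*f)=f(\varphi*x)$) coincide with the paper's argument. Where you genuinely diverge is in how $(2)$ and $(4)$ are tied back to $(1)$. The paper closes the loop by citing \cite[Proposition 6.7]{A20} for $(4\Rightarrow 2)$ and $(2\Leftrightarrow 1)$, a reference set in the commutative algebra $A(G)$ where left and right convolution coincide; you instead prove $(4\Rightarrow 1)$ directly by applying $(4)$ to $X=I^\perp$ for $I=\overline{L^1(\G)*f+\C f}$ and running the bipolar theorem to get $\overline{L^1(\G)*I}=I\ni f$, which is clean and self-contained. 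More substantively, your dualization of $(2)$ correctly exposes that condition $(H)$ for all $x$ is equivalent to the \emph{opposite-handed} implication $x*\varphi=0\Rightarrow\varphi(x)=0$, whereas $(1)$ and $(3)$ live on the other side; bridging this with the unitary antipode, via $R_\G(f*x)=R_\G(x)*(f\circ R_\G)$ (which follows from $\Delta_\G\circ R_\G=\Sigma(R_\G\otimes R_\G)\Delta_\G$, normality of $R_\G$, and $R_\G^2=\id$), is a genuinely noncommutative step that the paper's appeal to the commutative reference elides. So your route buys a complete, reference-free argument and makes explicit exactly where a quantum-group-specific input is needed; the paper's route is shorter but leans on a citation whose proof uses commutativity of the Fourier algebra at precisely the point you flag.
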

\begin{proof}
    First, $(4\implies 2)$ follows verbatim to the corresponding statement in \cite[Proposition 6.7]{A20}. Now we note that commutativity of the Fourier Algebra appears in the proof of the corresponding statement in \cite[Proposition 6.7]{A20} of $(1\implies 3)$, so we must supply our own proof in the CQG setting here to obtain $(1\iff 2\iff 3)$. With that said, $(2\implies 1)$ does follow verbatim from \cite[Proposition 6.7]{A20} and the converse follows from a similar Hahn--Banach argument. Then $(1\iff 3)$ follows from the observation
    $$f\in \overline{L^1(\G)*f}\iff \ker(\id\otimes f)\Delta_\G = \overline{L^1(\G)*f}^\perp \subseteq\{f\}^\perp.$$
    So, we have $(4\implies 3\iff 2\iff 1)$.\\
    \\
    For $(3\implies 4)$, take $f\in L^1(\G)$, so $x*f\in X$, which means for $g\in X_\perp$ that
    $$0 = g(x*f) = f(g*x).$$
    Since $f\in L^1(\G)$ was arbitrary, we deduce that $g*x = 0$, which means $g(x) = 0$, that is $x\in X$ as desired.
\end{proof}
The following is an immediate consequence of Proposition \ref{APai}.
\begin{prop}
    If a DQG $\widehat{\G}$ has the AP, then it has property left $D_\infty$.
\end{prop}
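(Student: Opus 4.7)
The plan is very short: I would just unpack the AP hypothesis via Proposition~\ref{APai} to see that it already delivers a (not necessarily bounded) left approximate identity for $L^1(\G)$, and then observe that this is exactly what property left $D_\infty$ for $\widehat{\G}$ demands.

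More concretely, I would apply Proposition~\ref{APai} to the compact group $H = \{e\}$, so that $A(H) = \C$, $1_H = 1$, and $L^1(\G)\widehat{\otimes}A(H) = L^1(\G)$. Under this identification the condition $\|(f_i\otimes 1_H)*g - g\|_1 \to 0$ for every $g \in L^1(\G)\widehat{\otimes}A(H)$ collapses to $\|f_i * g - g\|_1 \to 0$ for every $g \in L^1(\G)$. This is precisely the content of the remark preceding Proposition~\ref{APai}: the AP of $\widehat{\G}$ furnishes a net $(f_i)\subseteq L^1(\G)$ that is a left approximate identity for $L^1(\G)$.

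With such a net in hand, the conclusion is immediate. Given any $f\in L^1(\G)$, we have $f_i * f \in L^1(\G)*f$ and $f_i * f \to f$ in norm, hence $f \in \overline{L^1(\G)*f}$. Since $f$ was arbitrary, this is exactly the statement that $\widehat{\G}$ has property left $D_\infty$, per the definition given after Corollary~\ref{leftIdealsCor}.

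There is essentially no obstacle to overcome: the work has been done in Proposition~\ref{APai} and its preceding remark. The only thing worth flagging is that here we are not using the stronger multiplier-bounded content of the AP at all; once the AP is reformulated in the approximate-identity form of Proposition~\ref{APai}, even the $H = \{e\}$ case already suffices for property left $D_\infty$, and the quantitative information carried by $M^l_{cb}$ plays no further role in this particular implication.
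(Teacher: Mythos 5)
Your proposal is correct and is exactly the argument the paper intends: the text preceding Proposition~\ref{APai} points out that taking $H=\{e\}$ yields a (possibly unbounded) left approximate identity for $L^1(\G)$, and the paper then states the proposition as an immediate consequence. Your unpacking of why $f_i*f\to f$ gives $f\in\overline{L^1(\G)*f}$ matches the paper's (omitted) reasoning precisely.
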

Now we point out weakly amenable examples in the literature.
\begin{eg}
    The duals of the free unitary and orthogonal compact quantum groups $U^+_F$ and $O^+_F$, quantum permutation groups $S^+_n$, and quantum reflection groups $H^{+(s)}_n$ are all weakly amenable \cite{B16}.
\end{eg}
\section{Coamenability and Ideals}
\subsection{Compact Quasi--Subgroups}
In this section, we make progress towards understanding the closed left ideals of $L^1(\G)$ that admit {\bf bounded right approximate identities (brais)}. Recall that if $\Gamma$ is discrete, then the ideals $I(\Lambda)\subseteq A(\Gamma)$, where $\Lambda$ is a subgroup of $\Gamma$, admit a {\bf bounded approximate identity (bai)} if and only if $\Gamma$ is amenable \cite{For}. Moreover, it is not too difficult to prove that $I(s\Lambda)$ has a bai if and only if $I(\Lambda)$ does, where $s\in \Gamma$. So, if $\Gamma$ is amenable, we have identified many ideals that admit bais and otherwise, many ideals that do not. Note that this was generalized to amenable locally compact groups in \cite{FKLS03}.

Recall from Section $2.3$ that the right coideals of $VN(\Gamma)$ are of the form $VN(\Lambda)$, where $\Lambda$ is a subgroup of $\Gamma$ and $J^1(VN(\Lambda)) := VN(\Lambda)_\perp = I(\Lambda)$. So, for CQGs in general, we replace $VN(\Lambda)$ with a right coideal $N\subseteq L^\infty(\G)$, and $I(\Lambda)$ with $J^1(N) := N_\perp$. As we will elaborate on shortly, we are forced to restrict our attention to a certain subclass of right coideals. This starts with the following result of \cite{NM19}.
\begin{thm}\cite[Theorem 3.1]{NM19}\label{QuasiSubJust}
    Let $\G$ be a LCQG and $I\trianglelefteq L^1(\G)$ be a closed left ideal. Then $I$ has a brai only if there exists a right $L^1(\G)$--module projection $L^\infty(\G)\to I^\perp$.
\end{thm}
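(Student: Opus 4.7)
The plan is to produce the projection as $P := \id - L$, where $L : L^\infty(\G)\to L^\infty(\G)$ is a weak$^*$ cluster point of the adjoint operators $T_i(x) := e_i*x = (\id\otimes e_i)\Delta_\G(x)$ associated with a brai $(e_i)\subseteq I$ for $I$. Since $\sup_i \|e_i\|_1 < \infty$, the operators $T_i$ are uniformly bounded, so for every $x\in L^\infty(\G)$ the set $\{e_i*x\}$ is norm-bounded. Choosing a free ultrafilter $\mathcal{U}$ on the index set, I define
$$L(x) := \mathrm{w}^*\text{-}\lim_{\mathcal{U}}e_i*x,$$
which exists by Banach--Alaoglu and yields a bounded linear map on $L^\infty(\G)$.

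Next, I verify that $P = \id - L$ is a projection onto $I^\perp$. For $x\in I^\perp$ and any $g\in L^1(\G)$, the identity $g(e_i*x) = (g*e_i)(x) = 0$ holds because $g*e_i\in I$ ($I$ is a left ideal containing $e_i$), so $e_i*x = 0$ for each $i$, giving $L(x) = 0$ and $P(x) = x$. In the other direction, for arbitrary $x\in L^\infty(\G)$ and any $g\in I$,
$$g(L(x)) = \lim_{\mathcal{U}}g(e_i*x) = \lim_{\mathcal{U}}(g*e_i)(x) = g(x),$$
where the last equality uses the norm convergence $g*e_i\to g$ afforded by the brai. Thus $g(P(x)) = 0$ for every $g\in I$, i.e.\ $P(x)\in I^\perp$, so $P$ is a projection onto $I^\perp$ as required.

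Finally, I verify that $P$ respects the right $L^1(\G)$-module structure, i.e.\ $P(x*f) = P(x)*f$ for all $f\in L^1(\G)$ and $x\in L^\infty(\G)$; equivalently $L(x*f) = L(x)*f$. Coassociativity yields the compatibility $e_i*(x*f) = (e_i*x)*f$ (both sides collapse to $(f\otimes\id\otimes e_i)(\id\otimes\Delta_\G)\Delta_\G(x)$), and since $y\mapsto y*f = (f\otimes\id)\Delta_\G(y)$ is weak$^*$ continuous (being the Banach-space adjoint of $g\mapsto g*f$ on $L^1(\G)$),
$$L(x*f) = \mathrm{w}^*\text{-}\lim_{\mathcal{U}}(e_i*x)*f = L(x)*f.$$
The conceptual step is the first one — turning a brai into a weak$^*$ cluster point at the $L^\infty$ level — and the only real subtlety is the bookkeeping between the left and right actions of $L^1(\G)$ on $L^\infty(\G)$, together with the use of coassociativity to see that the two convolutions commute.
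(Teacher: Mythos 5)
Your proof is correct and is, in substance, the argument the paper gestures at: the paper simply cites \cite[Theorem 3.1]{NM19} and remarks that the projection is built from a weak$^*$ cluster point $e$ of the brai, which is precisely your map $L(x)=\mathrm{w}^*\text{-}\lim_{\mathcal{U}}e_i*x$ (just take $\mathcal{U}$ to refine the tail filter of the directed index set, so that ultralimits agree with the net limits $g*e_i\to g$). The only substantive difference is a point of bookkeeping that your computation actually settles: since $e_i*x=0$ for $x\in I^\perp$, the cluster-point map $x\mapsto e*x$ is the projection onto the complementary right-invariant subspace, and the projection onto $I^\perp$ is $\id-L$ as you write, rather than $x\mapsto e*x$ as the paper's parenthetical remark would suggest.
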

\begin{proof}
    See the corresponding reference for a proof. We will point out, however, that the projection onto $I^\perp$ is of the form $x\mapsto e*x$, where $e\in L^\infty(\G)$ is a weak$^*$ cluster point of the given brai, and $e*x$ denotes the natural action of $L^\infty(\G)^*$ on $L^\infty(\G)$.
\end{proof}
\begin{rem}\label{princIdealsMr}
    According to \cite[Theorem 3.1]{HNR12}, (recall, $\G$ is compact) the bounded right $L^1(\G)$--module maps $\fB_{L^1(\G)}^R(L^\infty(\G))$
    are normal, i.e., for every $M\in \fB_{L^1(\G)}^R(L^\infty(\G))$ there exists a left $L^1(\G)$-module map $m : L^1(\G)\to L^1(\G)$ such that $M = m^* : L^\infty(\G)\to L^\infty(\G)$.
    Now, if $I$ is a closed left that has a brai with weak$^*$ cluster point $e\in L^\infty(\G)^*$ (afforded by Banach--Alaoglu), then we get $I = L^1(\G)*e$, where $f*e$ denotes the natural action of $f$ on $L^\infty(\G)^*$ (see also the proof of \cite[Theorem 2.2]{NM19}).
\end{rem}
We will be focusing on those left ideals admitting brais for which their corresponding right $L^1(\G)$-module projections are positive. Thus we are in a special case of the necessary condition in Theorem \ref{QuasiSubJust}.
\begin{defn}
    A {\bf compact quasi--subgroup} of $\G$ is a right coideal $N$ of $L^\infty(\G)$ such that there exists a normal right $L^1(\G)$--module conditional expectation $E_N^R : L^\infty(\G)\to N$.
\end{defn}
A {\bf closed quantum subgroup} of a CQG $\G$ is a CQG $\H$ such that there exists surjective unital $*$-homomorphism $\pi_\H^u : C^u(\G)\to C^u(\H)$ satisfying
$$(\pi^u_\H\otimes\pi^u_\H)\Delta^u_\G = \Delta_\H^u\circ \pi_\H^u.$$
Equivalently we can find such a $*$-homomorphism $\pi_\H : \Pol(\G)\to \Pol(\H)$. The quotient space $\G/\H$ is defined by setting
$$\Pol(\G/\H) = \{a\in \Pol(\G/\H) : (\id\otimes \pi_\H)\Delta_\G(a) = a\otimes 1\}.$$
We denote $L^\infty(\G/\H) = \overline{\Pol(\G/\H)}^{wk*}$ etc.

The quotients associated with closed quantum subgroups of a CQG fall under the umbrella of compact quasi-subgroups. Using the embedding $(\pi_\H^u)^* : M^u(\H)\to M^u(\G)$ we find that $\omega_{\G/\H} = h_\H^u\circ \pi_\H^u\in M^u(\G)$ is an idempotent state. Then the adjoint of the map $f\mapsto f*\omega_{\G/\H}$ is a normal right $L^1(\G)$-module conditional expectation $L^\infty(\G)\to L^\infty(\G/\H)$.
\begin{rem}
\begin{enumerate}
    \item It turns out that the compact quasi--subgroups are in $1$--$1$ correspondence with the idempotent states in $M^u(\G)$ (cf. \cite{Soltan}). The relationship is as follows: the compact quasi--subgroups take the form $M^l_{\omega}(L^\infty(\G))$ where $\omega\in M^u(\G)$ is an idempotent state and we recall that $M^l_\omega$ is the adjoint of the left multiplier $m^l_\omega\in M^l(L^1(\G))$ associated with $\omega$ (cf. Section 2.6).
    
    Note in particular that if $\H$ is a closed quantum subgroup of $\G$, then $L^\infty(\G/\H) = M^l_{h_\H^u\circ\pi^u_\H}(L^\infty(\G))$, so $L^\infty(\G/\H)$ is a compact quasi-subgroup of $\G$. We also point out that
    $$M^l_{\omega_N}(L^\infty(\G))_\perp = \overline{\{f- f*\omega : f\in L^1(\G)\}}^{||\cdot||_1}$$
    where $\omega_N$ is the idempotent state associated with $N$ (cf. \cite{NSSS19}).
    \item It is a theorem of Kawada and It\^{o} \cite{KI40} that the closed subgroups of a compact group $G$ are in $1$--$1$ correspondence with the idempotent states in the measure algebra via $H\leq G \iff \omega_H\in M(G)$, where $L^\infty(G/H) = M^l_{\omega_H}(L^\infty(G))$. Likewise, for a discrete group $\Gamma$, we have a $1$--$1$ correspondence between subgroups and idempotent states in $B(\Gamma)$ via $\Lambda\leq \Gamma\iff 1_\Lambda\in B(\Gamma)$, and we have $VN(\Gamma)\cdot 1_\Lambda = VN(\Lambda)$ (see \cite{IS05}). So, we can view the notion of a compact quasi--subgroup as the quantum analogue of the quotient algebra of a closed subgroup of a compact group / group von Neumann algebra of a subgroup of a discrete group. Note that, in general, not all compact quasi--subgroups arise from closed quantum subgroups. We can see this even for discrete co--groups as any non--normal subgroup (of the underlying discrete group) gives rise to a compact quasi--subgroup but not a closed quantum subgroup. A non-cocommutative example can be found in \cite{P96}.
    
    \item Not every right coideal of a CQG is a compact quasi-subgroup. The Podle\`s spheres of $SU_q(2)$, where $q\neq 2$, give uncountably many examples for a fixed $q$ (see \cite{P87}).
\end{enumerate}
\end{rem}
Recall, for an invariant subalgebra $N$, $N_*$ has a Banach algebra structure inherited directly from the quotient $L^1(\G)/J^1(N) \cong N_*$. If we assume $N_*$ has a bai (so in the case $N = L^\infty(\G/\H)$ for some normal closed quantum subgroup $\H$, $\G/\H$ is coamenable), then we can easily transfer bais between $L^1(\G)$ and $J^1(N)$ from the results found in \cite{DW79} and \cite{For87}.
\begin{prop}\label{G/HCoamen}
    Let $I\trianglelefteq L^1(\G)$ be a closed two--sided ideal and suppose $L^1(\G)/I$ has a bai. Then $\G$ is coamenable if and only if $I$ has a bai.
\end{prop}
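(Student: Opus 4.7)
The plan is to recognize the proposition as a general Banach-algebra transfer result for bounded approximate identities (bais) along the short exact sequence
\[
0 \longrightarrow I \longrightarrow L^1(\G) \longrightarrow L^1(\G)/I \longrightarrow 0,
\]
and then to invoke the results of \cite{DW79, For87} already cited in the surrounding discussion. Recall that coamenability of $\G$ is by definition the assertion that $L^1(\G)$ admits a bai, so under the standing hypothesis that $L^1(\G)/I$ has a bai, the content of the proposition is merely that $L^1(\G)$ and $I$ simultaneously do or do not admit bais.

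For the $(\Leftarrow)$ direction I would suppose $I$ has a bai $(u_\alpha)$ and lift a bai of $L^1(\G)/I$ to a bounded net $(f_\beta) \subseteq L^1(\G)$. A standard construction, such as forming $f_\beta + u_\alpha - u_\alpha \ast f_\beta$ and reindexing on the product directed set, yields a bounded net in $L^1(\G)$ that approximately multiplies to the identity on all of $L^1(\G)$: modulo $I$ the contributions of $u_\alpha$ disappear and $f_\beta$ acts as a bai, while on $I$ the term $u_\alpha$ dominates. This is precisely the type of splicing result established in \cite{DW79}, and the conclusion is that $L^1(\G)$ admits a bai, i.e.\ $\G$ is coamenable.

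For the $(\Rightarrow)$ direction I would assume $\G$ coamenable, so that $L^1(\G)$ admits a bai $(e_\alpha)$. Lifting a bai of $L^1(\G)/I$ to a bounded net $(f_\beta) \subseteq L^1(\G)$, the bounded net $(e_\alpha - f_\beta \ast e_\alpha)$ lies in $I$ (since its image in $L^1(\G)/I$ tends to zero) and, appropriately reindexed on the product directed set, serves as a bai for $I$: for $g \in I$ one checks $g \ast e_\alpha \to g$ while $g \ast f_\beta \ast e_\alpha$ is controlled by $g \ast f_\beta$, which becomes small since $f_\beta$ eventually acts trivially on $I$ modulo the quotient bai behaviour. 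This is the construction carried out in \cite{For87}.

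The principal technical point in both directions is ensuring that the combined nets, indexed over a product directed set with a suitable order, form genuine bais rather than merely pointwise approximate identities on dense subsets; this bookkeeping is handled routinely by the arguments of the cited references and does not interact with the quantum-group structure of $L^1(\G)$ beyond recognizing it as a Banach algebra with a bai.
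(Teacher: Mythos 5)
Your $(\Leftarrow)$ direction is correct and is exactly the paper's route: one splices a bai for $I$ with a lift of a bai for $L^1(\G)/I$ as in \cite{DW79} to produce a bai for $L^1(\G)$, i.e.\ coamenability.

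The $(\Rightarrow)$ direction has genuine gaps. First, the net $e_\alpha - f_\beta\ast e_\alpha$ does not lie in $I$: its image in $L^1(\G)/I$ only \emph{tends} to zero, which is not the same as vanishing, so you have not produced a net in $I$ at all. Second, and more seriously, the assertion that $g\ast f_\beta$ ``becomes small'' for $g\in I$ is unfounded: the bai property of $(\bar f_\beta)$ in the quotient carries no information about how $f_\beta$ multiplies elements of $I$ (if, say, $f_\beta$ happened to be a bai for $L^1(\G)$ itself, then $g\ast f_\beta\to g\neq 0$). In fact, no splicing of nets can close this direction from the stated hypotheses alone: once $L^1(\G)$ has a bai, the quotient $L^1(\G)/I$ automatically has one (the image of the bai), yet a closed two-sided ideal of a Banach algebra with a bai need not itself have a bai --- already for $\G=\T$ the ideal $I(E)\subseteq L^1(\T)$ admits a bai only when $E\subseteq\Z=Irr(\T)$ lies in the coset ring of $\Z$. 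The paper's converse therefore does not argue by nets at all; it invokes the criterion from \cite{For87} that, in a Banach algebra $A$ with a bai, a closed left ideal $J$ has a brai if and only if there is a right $A$--module projection $A^*\to J^\perp$. The substance of the converse is thus the production of a right $L^1(\G)$--module projection of $L^\infty(\G)$ onto $I^\perp$; in the places where the proposition is applied this projection is supplied explicitly (by the conditional expectation $E^R_N$, resp.\ the idempotent functional $\omega_N\cdot x_0^{-1}$). Your argument supplies no such projection, and without one the implication does not go through.
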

\begin{proof}
    If $L^1(\G)/I$ and $I$ both have bais, then we can build a bai for $L^1(\G)$ \cite[Pg. 43]{DW79}. The converse is covered by the more general fact that given a Banach algebra $A$ which has a bai and closed left ideal $J$, $J$ has a brai if and only if $J^\perp$ is right invariantly complemented in $A^*$, i.e., there is a right $A$--module projection $P : A^*\to J^\perp$ (cf. \cite[4.1.4 Pg. 42]{For87}).
\end{proof}
When $\Gamma$ is a discrete group and $\Lambda$ is a subgroup, the compact quasi-subgroup $VN(\Lambda)\subseteq VN(\Gamma)$ is explicitly written in terms of its hull $\Lambda$ via
$$VN(\Lambda) \cong \overline{\mathrm{Span}\{\lambda_\Gamma(s) : s\in \Lambda\}}^{wk*} = L^\infty(\widehat{\Gamma}, \Lambda),$$
where $L^\infty(\widehat{\Gamma},\Lambda)$ is our notation from Section $3.1$. Our techniques for the main results of this section will use a similar description for the compact quasi-subgroups of CQGs in general.

Given a compact quasi--subgroup $N$, we will denote the corresponding idempotent state by $\omega_N$. Then, for compact $\G$, we have a projection
$$\tilde{E}^R_N := (E^R_N)|_{\Pol(\G)} = (\id\otimes\omega_N)\Delta_\G : \Pol(\G)\to \Pol(N) := \tilde{E}^R_N(\Pol(\G))$$
onto a right invariant subalgebra of $\Pol(\G)$ satisfying $$\overline{\Pol(N)}^{wk*} = N.$$
See also \cite[Section 2]{FLS16} for a discussion in the case of CQGs.
\begin{rem}\label{NormalSGMin}
    Wang \cite{W09} showed that normality is equivalent to having $[\omega_{L^\infty(\G/\H)}(u^\pi_{i,j})] = I_{n_\pi}$ or $0$ for all $\pi\in Irr(\G)$, from which it was also shown for normal $\H$,
    $$\Pol(\G/\H) = \Pol(\G, E_\H)$$
    where $E_\H = (E_\pi)_{\pi\in Irr(\G)}$ is the hull such that $E_\pi = \fH_\pi$ if $[\omega_{L^\infty(\G/\H)}(u^\pi_{i,j})] = I_{n_\pi}$ and $E_\pi = \{0\}$ otherwise. In particular, $L^\infty(\G, E_\H) = L^\infty(\G/\H)$.
\end{rem}
The above remark generalizes to the following for compact quasi--subgroups (and uses the same techniques as Wang).
\begin{lem}\label{RepsQSubs}
    Let $N$ be a compact quasi--subgroup. Then there exists an orthonormal basis $\{e_i^\pi\}$ of $\fH_\pi$ so that $u^\pi_{i,j}\in N$ if and only if $\omega_N(u^\pi_{j,j}) = 1$, and\normalfont{
    $$\Pol(N) = \Span\{u_{i,j}^\pi : 1\leq i\leq n_\pi, e_j^\pi\in E_\pi\}.$$}
\end{lem}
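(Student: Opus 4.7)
The plan is to study the matrix $A_\pi := [\omega_N(u^\pi_{i,j})]_{i,j} = (\omega_N\otimes\id)(U^\pi) \in M_{n_\pi}$ attached to the idempotent state $\omega_N$ and to diagonalize it via a suitable choice of ONB on $\fH_\pi$. Once $A_\pi$ is expressed as a $\{0,1\}$-valued diagonal matrix, the formula for the conditional expectation $\tilde{E}^R_N = (\id\otimes\omega_N)\Delta_\G$ on matrix coefficients becomes fully explicit and both assertions drop out by inspection. This follows the same template Wang used for normal closed quantum subgroups recalled in Remark \ref{NormalSGMin}.

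First I would verify that $A_\pi$ is an idempotent matrix. Using $\Delta_\G(u^\pi_{i,j}) = \sum_k u^\pi_{i,k}\otimes u^\pi_{k,j}$ together with $\omega_N*\omega_N = \omega_N$, we compute
$$(A_\pi^2)_{i,j} = \sum_k \omega_N(u^\pi_{i,k})\omega_N(u^\pi_{k,j}) = (\omega_N*\omega_N)(u^\pi_{i,j}) = (A_\pi)_{i,j}.$$
Next, because $U^\pi$ is a unitary and $\omega_N$ is a state on $C^u(\G)$, the slice $A_\pi = (\omega_N\otimes\id)(U^\pi)$ has operator norm at most $1$. Since an idempotent on a finite-dimensional Hilbert space of operator norm $\le 1$ is necessarily an orthogonal projection, $A_\pi = A_\pi^* = A_\pi^2$ is a projection in $M_{n_\pi}$.

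Now pick an ONB $\{e_j^\pi\}$ of $\fH_\pi$ consisting of eigenvectors of $A_\pi$, so that in this basis $A_\pi = \mathrm{diag}(\omega_N(u^\pi_{j,j}))$ with each diagonal entry in $\{0,1\}$; set $E_\pi := \mathrm{range}(A_\pi) = \Span\{e_j^\pi : \omega_N(u^\pi_{j,j}) = 1\}$. With respect to this basis I would then compute
$$\tilde{E}^R_N(u^\pi_{i,j}) = (\id\otimes\omega_N)\Delta_\G(u^\pi_{i,j}) = \sum_k u^\pi_{i,k}(A_\pi)_{k,j} = (A_\pi)_{j,j}\,u^\pi_{i,j},$$
where the last equality uses that $A_\pi$ is diagonal in this basis. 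Consequently $u^\pi_{i,j} \in N$ iff $\tilde{E}^R_N(u^\pi_{i,j}) = u^\pi_{i,j}$ iff $(A_\pi)_{j,j} = \omega_N(u^\pi_{j,j}) = 1$, which is the first claim. The second claim follows because $\Pol(N) = \tilde{E}^R_N(\Pol(\G))$ is spanned by the images $\tilde{E}^R_N(u^\pi_{i,j}) = (A_\pi)_{j,j} u^\pi_{i,j}$ over all $\pi,i,j$, which is exactly $\Span\{u^\pi_{i,j} : 1\le i\le n_\pi,\ e_j^\pi\in E_\pi\}$.

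The only step that is more than a routine calculation is the passage from ``$A_\pi$ is an idempotent contraction'' to ``$A_\pi$ is an orthogonal projection'', which relies on the standard Hilbert-space fact that a contractive idempotent is self-adjoint; once this is in hand, everything is driven by the coproduct formula on matrix units and the idempotency of $\omega_N$, so no further subtlety arises.
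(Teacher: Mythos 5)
Your proof is correct and follows essentially the same route as the paper: diagonalize $\pi(\omega_N)=[\omega_N(u^\pi_{i,j})]$ as an orthogonal projection and read off both claims from the formula $\tilde{E}^R_N(u^\pi_{i,j})=\omega_N(u^\pi_{j,j})u^\pi_{i,j}$. The only difference is that you supply the justification (idempotent contraction $\Rightarrow$ orthogonal projection) for a fact the paper simply asserts, which is a welcome addition rather than a deviation.
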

\begin{proof}
    Fix $\pi\in Irr(\G)$. Since $\omega_N$ is an idempotent state, $\pi(\omega_N)$ is an orthogonal projection. Choose an ONB $\{e_i^\pi\}$ so that $\pi(\omega_N) = [\omega_N(u_{i,j}^\pi)]$ is diagonal. Then, we have $\omega_N(u_{i,j}^\pi) = \delta_{i,j}$ or $\omega_N(u_{i,j}^\pi) = 0$ for any $i,j$. So, if $u^\pi_{i,j}\in N$, then
    $$u^\pi_{i,j} = E^r_N(u^\pi_{i,j}) = (\id\otimes\omega_N)\Delta_\G(u^\pi_{i,j}) = \sum_{t=1}^{n_\pi} u_{i,t}^\pi \omega_N(u_{t,j}^\pi) = \omega_N(u^\pi_{j,j})u^\pi_{i,j}$$
    implies $\omega_N(u^\pi_{j,j}) = 1$. If $u^\pi_{i,j}\notin N$, then
    $$u^\pi_{i,j}\neq E_N^r(u_{i,j}^\pi) = \omega_N(u^\pi_{j,j})u^\pi_{i,j},$$
    which means $\omega_N(u^\pi_{j,j}) = 0$.
    
    Notice that we have shown $E^R_N(u^\pi_{i,j}) = u^\pi_{i,j}$ or $0$. The second claim follows. 
\end{proof}
\begin{cor}\label{RepsQSubsDense}
    Let $\G$ be a CQG and $N$ a compact quasi--subgroup with hull $E_N$. Then \normalfont{$\Pol(\G, E_N) = \Pol(N)$},
    and furthermore, $L^\infty(\G, E_N) = N$.
\end{cor}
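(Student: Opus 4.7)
The plan is to reduce the corollary to Lemma \ref{RepsQSubs} by identifying the hull $E_N$ explicitly. For each $\pi\in Irr(\G)$ I would fix the ONB $\{e_i^\pi\}$ of $\fH_\pi$ furnished by Lemma \ref{RepsQSubs}, so that $u^\pi_{i,j}\in N$ if and only if $\omega_N(u^\pi_{j,j})=1$, and set
$$(E_N)_\pi = \Span\{e_j^\pi : \omega_N(u^\pi_{j,j}) = 1\}.$$
The main bookkeeping step is then to verify that this $E_N$ really is a hull of the closed left ideal $J^1(N) = N_\perp$ in the sense of Proposition \ref{idealsinL^1}; concretely, that $\pi(J^1(N)) = \{A\in M_{n_\pi} : A((E_N)_\pi) = 0\}$ as left ideals of $M_{n_\pi}$.

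The forward inclusion is immediate from Lemma \ref{RepsQSubs}: for $f\in N_\perp$ and any $e_j^\pi \in (E_N)_\pi$, the lemma gives $u^\pi_{i,j}\in N$ for every $i$, hence $\pi(f)e_j^\pi = \sum_i f(u^\pi_{i,j})e_i^\pi = 0$. For the reverse inclusion, given $A\in M_{n_\pi}$ with $A((E_N)_\pi) = 0$, I would use the inclusion $M_{n_\pi}\subseteq c_{00}(\widehat{\G}) = \lambda_\G(\widehat{\Pol(\G)})$ to pick $f\in L^1(\G)$ with $\lambda_\G(f) = A$, viewed as an element concentrated on the $\pi$-block of $c_{00}(\widehat{\G})$. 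Then $\pi(f) = A$ while $\rho(f) = 0$ for all $\rho \neq \pi$, so $f$ annihilates every matrix coefficient $u^\rho_{k,l}\in N$ (trivially for $\rho\neq \pi$ and because $A_{k,l} = 0$ whenever $e_l^\pi\in (E_N)_\pi$ when $\rho = \pi$), hence vanishes on $\Pol(N)$, and therefore on all of $N$ by the weak$^*$-density $\overline{\Pol(N)}^{wk*} = N$ recorded just before Remark \ref{NormalSGMin}.

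Once $E_N$ has been identified, the first equality $\Pol(\G, E_N) = \Pol(N)$ is a direct comparison of spans: unpacking $u^\pi_{\xi,\eta}$ in the basis $\{e_i^\pi\}$ yields
$$\Pol(\G, E_N) = \Span\{u^\pi_{i,j} : 1\leq i\leq n_\pi,\ e_j^\pi\in (E_N)_\pi,\ \pi\in Irr(\G)\},$$
which is exactly the description of $\Pol(N)$ supplied by Lemma \ref{RepsQSubs}. The second equality $L^\infty(\G, E_N) = N$ then follows by taking weak$^*$ closures of both sides, using $L^\infty(\G, E_N) = \overline{\Pol(\G, E_N)}^{wk*}$ together with $\overline{\Pol(N)}^{wk*} = N$. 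The only step that requires any genuine work is producing the $f$ in the reverse inclusion above, and even that is a routine consequence of the Plancherel-type decomposition $\lambda_\G = \bigoplus_\pi \pi$ already used in the proof of Proposition \ref{idealsinL^1}; I do not foresee a substantive obstacle.
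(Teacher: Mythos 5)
Your proof is correct and follows essentially the same route as the paper, which treats the corollary as an immediate consequence of Lemma \ref{RepsQSubs} together with the weak$^*$ density of $\Pol(N)$ in $N$. The only addition is your explicit verification that $(E_N)_\pi = \ran\pi(\omega_N)$ really is the hull of $J^1(N)$ in the sense of Proposition \ref{idealsinL^1}; the paper leaves this identification implicit, and your check of it is sound.
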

This establishes an explicit description of compact quasi-subgroups in terms of their underlying hull that is reminiscent of the embedding $VN(\Lambda)\subseteq VN(\Gamma)$ when $\Lambda$ is a subgroup of $\Gamma$.

Now fix a compact quasi--subgroup $N$. We will build from it canonical ``continuous function spaces'' and ``measure spaces''. Through these spaces we identify a certain weak$^*$ closed left ideal, $J^u(N)$, in $M^u(\G)$ corresponding to $N$. The importance of this left ideal will reveal itself in the main results of this section. Inspired by the techniques of White \cite{Wh18} that we exploited in Section $3.2$, we relate the problem of determining the existence of brais in $J^1(N)$ with the problem of determining when $\overline{J^1(N)}^{wk*} = J^u(N)$.

Accordingly, we will define
$$C^u(N) := \overline{\Pol(N)}^{||\cdot||_u} ~ \text{and} ~ C^r(N) := \Gamma_\G(C^u(N)),$$
where we recall that $\Gamma_\G: C^u(\G)\to C^r(\G)$ is the reducing morphism, and so, by definition, we have a surjective $*$--homomorphism $\Gamma_\G|_{C^u(N)} : C^u(N)\to C^r(N)$. Note that since $\Gamma_\G(\Pol(N))\subseteq N$, we have $C^r(N)\subseteq N$ and by weak density of $\Pol(N)$ in $N$, we have $\overline{C^r(N)}^{wk*} = N$. We also have the right $M^u(\G)$--module conditional expectation
$$E^R_{C^u(N)} := (\id\otimes\omega_N)\Delta^u_\G : C^u(\G)\to C^u(N).$$
Then we will set
$$M^u(N) := C^u(N)^* ~ \text{and} ~ M^r(N)^* := C^r(N).$$
Then, by definition, the adjoint is a completely isometric embedding:
$$(\Gamma_\G|_{C^u(N)})^* : M^r(N)\to M^u(N).$$
Now, by taking the adjoint of the inclusion $C^u(N)\subseteq C^u(\G)$, we obtain a surjective weak$^*$--weak$^*$ continuous linear map
$$T^u_N : M^u(\G)\to M^u(N)$$
whose kernel we denote by $J^u(N)$, which of course satisfies $J^u(N) = C^u(N)^\perp$.
\begin{rem}
    Note that in the case of a quotient $\G/\H$, it is not hard to show that
    $$C^u(\G/\H) := \{a\in C^u(\G) : (\id\otimes \pi^u_\H)\Delta^u_\G(a) = a\otimes 1\} = C^u(L^\infty(\G/\H)).$$
    Furthermore, we will denote $C^r(\G/\H) = C^r(L^\infty(\G/\H))$ etc.
\end{rem}
For the moment we will consider quotients of closed quantum subgroups. The following notion was formulated in \cite{Kal2}.
\begin{defn}
    For a CQG $\G$, we say a quotient $\G/\H$ is {\bf coamenable} if $\pi_\H^u : C^u(\G)\to C^u(\H)$ admits a reduced version, that is, there exists $\pi^r_\H : C^r(\G)\to C^r(\H)$ such that $\Gamma_\H\circ\pi^u_\H = \pi^r_\H\circ\Gamma_\G$ (where $\Gamma_\G :C^u(\G)\to C^r(\G)$ is the reducing morphism).
\end{defn}
When $\Gamma$ is discrete, it is well-known that a subgroup $\Lambda$ is amenable if and only if $1_\Lambda\in B_r(\Gamma)$. So, in general, coamenability of $\G/\H$ is a bonafide generalization of amenability for ``quantum quotients.''

Now we state a useful necessary condition for coamenability of a quotient motivating the condition $\omega_N\in M^r(\G) = \overline{L^1(\G)}^{wk*}$, which we will be using for the main results of this subsection.
\begin{prop}\label{CoamenQuots}
    Let $\G$ be a CQG and $\H$ a closed quantum subgroup. If $\G/\H$ is coamenable, then $\omega_{L^\infty(\G/\H)}\in M^r(\G)\subseteq M^u(\G)$.
\end{prop}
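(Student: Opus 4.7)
The plan is to unwind the definition of $\omega_{L^\infty(\G/\H)}$ and then use coamenability of $\G/\H$ to factor it through the reducing morphism $\Gamma_\G : C^u(\G) \to C^r(\G)$, which is exactly what is needed for membership in $M^r(\G)$ under the embedding $\Gamma_\G^* : M^r(\G) \to M^u(\G)$.

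First, I recall from the discussion preceding Proposition \ref{CoamenQuots} that the idempotent state associated with $L^\infty(\G/\H)$ is
\[
\omega_{L^\infty(\G/\H)} = h_\H^u \circ \pi_\H^u \in M^u(\G),
\]
where $h_\H^u$ denotes the Haar state on $C^u(\H)$. A standard fact about the Haar state of a CQG is that it factors through the reducing morphism: there exists a (faithful) Haar state $h_\H^r$ on $C^r(\H)$ with $h_\H^u = h_\H^r \circ \Gamma_\H$. This is the first ingredient.

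Next, I would invoke the coamenability assumption on $\G/\H$, which by definition provides a $*$-homomorphism $\pi_\H^r : C^r(\G) \to C^r(\H)$ satisfying the intertwining relation $\Gamma_\H \circ \pi_\H^u = \pi_\H^r \circ \Gamma_\G$. Combining these two identities gives the chain
\[
\omega_{L^\infty(\G/\H)} = h_\H^u \circ \pi_\H^u = h_\H^r \circ \Gamma_\H \circ \pi_\H^u = h_\H^r \circ \pi_\H^r \circ \Gamma_\G.
\]
Setting $\omega := h_\H^r \circ \pi_\H^r$, we obtain a state on $C^r(\G)$, that is, $\omega \in M^r(\G)$, with $\omega_{L^\infty(\G/\H)} = \Gamma_\G^*(\omega)$. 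Under the completely isometric embedding $\Gamma_\G^* : M^r(\G) \hookrightarrow M^u(\G)$ this is precisely the statement $\omega_{L^\infty(\G/\H)} \in M^r(\G) \subseteq M^u(\G)$.

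There is essentially no obstacle here once the two factorizations (of $h_\H^u$ through $\Gamma_\H$, and of $\pi_\H^u$ through $\Gamma_\G$ via coamenability of the quotient) are put in place; the whole proof is a one-line diagram chase, so the only care needed is to cite or briefly justify that the universal Haar state descends to the reduced $C^*$-algebra, which is standard for compact quantum groups.
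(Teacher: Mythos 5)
Your proof is correct and follows essentially the same route as the paper: both factor $\omega_{L^\infty(\G/\H)} = h_\H^u\circ\pi_\H^u$ through the reducing morphisms via $h_\H^u = h_\H^r\circ\Gamma_\H$ and the intertwining relation $\Gamma_\H\circ\pi_\H^u = \pi_\H^r\circ\Gamma_\G$ supplied by coamenability of the quotient, exhibiting the state as $h_\H^r\circ\pi_\H^r\circ\Gamma_\G$. No gaps; this matches the paper's argument.
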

\begin{proof}
    Recall that $\omega_{L^\infty(\G/\H)} = h_\H^u\circ\pi^u_\H$ and the completely isometric embedding $M^r(\G)\subseteq M^u(\G)$ is given by the adjoint of $\Gamma_\G$. Recall also that we can factorize $h^u_\H = h^r_\H\circ\Gamma_\H$. Then
    $$M^r(\G)\ni h^r_\H\circ\pi^r_\H\circ\Gamma_\G = h_\H^r\circ\Gamma_\H\circ\pi^u_\H = h^u_\H\circ\pi^u.$$
\end{proof}
So, by assuming $\omega_N\in M^r(\G)$, we know that this condition holds at least for coamenable quotients (compare Corollary \ref{CoamenBAIIdeal} with Proposition \ref{G/HCoamen}). Next we take a look at the associated left ideals in $M^u(\G)$.
\begin{prop}\label{RightUnitUniv}
    Let $\G$ be a CQG and $N$ a compact quasi--subgroup. Then $J^u(N)$ has a right unit.
\end{prop}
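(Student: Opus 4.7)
The plan is to produce an explicit right unit, namely $e := \epsilon^u_\G - \omega_N$, and verify the two requirements: that $e$ belongs to $J^u(N)$, and that $\mu * e = \mu$ for every $\mu \in J^u(N)$.

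First I would verify that $e \in J^u(N) = C^u(N)^\perp$, i.e.\ that $\omega_N$ and $\epsilon^u_\G$ agree on $C^u(N)$. This follows cleanly from the conditional expectation structure: for $a \in C^u(N)$ we have $E^R_{C^u(N)}(a) = (\id \otimes \omega_N)\Delta^u_\G(a) = a$, and applying $\epsilon^u_\G$ to both sides gives
\[
\epsilon^u_\G(a) \;=\; (\epsilon^u_\G \otimes \omega_N)\Delta^u_\G(a) \;=\; \omega_N\bigl((\epsilon^u_\G \otimes \id)\Delta^u_\G(a)\bigr) \;=\; \omega_N(a),
\]
using the counit identity $(\epsilon^u_\G \otimes \id)\Delta^u_\G = \id$. (Alternatively one could invoke Lemma~\ref{RepsQSubs} and check the equality directly on the basis matrix coefficients $u^\pi_{i,j}$ spanning $\Pol(N)$, where both functionals equal $\delta_{i,j}$, and then extend by density.)

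Next I would check the right-unit property. For any $\mu \in J^u(N)$ and any $a \in C^u(\G)$,
\[
(\mu * \omega_N)(a) \;=\; (\mu \otimes \omega_N)\Delta^u_\G(a) \;=\; \mu\bigl((\id \otimes \omega_N)\Delta^u_\G(a)\bigr) \;=\; \mu\bigl(E^R_{C^u(N)}(a)\bigr) \;=\; 0,
\]
since $E^R_{C^u(N)}(a) \in C^u(N)$ and $\mu$ annihilates $C^u(N)$. Combined with $\mu * \epsilon^u_\G = \mu$ (unit of $M^u(\G)$), this gives $\mu * e = \mu - \mu * \omega_N = \mu$, as required.

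There is no real obstacle here: the argument is essentially a one-line calculation once one writes down the candidate $\epsilon^u_\G - \omega_N$ and invokes the module property of the conditional expectation $E^R_{C^u(N)} = (\id \otimes \omega_N)\Delta^u_\G$. The only subtlety worth mentioning is conceptual rather than technical: the same construction does not produce a left unit of $J^u(N)$, because $\omega_N * \mu$ need not vanish for $\mu \in J^u(N)$ (the expectation $E^R_{C^u(N)}$ is a right $L^1(\G)$-module map, not a left one). This reflects the asymmetry that motivates passing to brai versus blai in the later results of the section.
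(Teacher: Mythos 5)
Your proof is correct and follows essentially the same route as the paper: the candidate right unit $\epsilon^u_\G - \omega_N$ and the computation $\mu*\omega_N(a) = \mu(E^R_{C^u(N)}(a)) = 0$ are exactly the paper's argument. The only (cosmetic) difference is that the paper verifies $\epsilon^u_\G - \omega_N \in J^u(N)$ by evaluating on the matrix coefficients $u^\pi_{i,j}$ spanning $\Pol(N)$ via Lemma~\ref{RepsQSubs}, whereas you use the counit identity applied to $E^R_{C^u(N)}(a)=a$ --- both work, and you mention the paper's version as your alternative.
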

\begin{proof}
    Let $\G$ be a CQG and $N$ a compact quasi--subgroup. First notice that for $\mu\in J^u(N)$ and $a\in C^u(\G)$,
    $$0 = \mu(E^R_{C^u(N)}(a)) = \mu(\id\otimes\omega_N)\Delta^u_\G(a) = \mu*\omega_N(a)$$
    Therefore, $\mu*(\epsilon^u_\G - \omega_N) = \mu$ for all $\mu\in J^u(N)$. Finally, by choosing an ONB as in Lemma \ref{RepsQSubs},
    $$(\epsilon_\G^u - \omega_N)(u^\pi_{i,j}) = \delta_{i,j} - \delta_{i,j} = 0$$
    for all $u^\pi_{i,j}\in \Pol(N)$. Then, from density of $\Pol(N)$ in $C^u(N)$, we have $\epsilon^u_\G - \omega_N|_{C^u(N)} = 0$, that is, $\epsilon^u_\G - \omega_N\in J^u(N)$.
\end{proof}
\begin{cor}
    Let $\G$ be a CQG and $N$ an invariant subalgebra. Then $J^u(N)$ has an identity element.
\end{cor}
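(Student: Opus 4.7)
The plan is to leverage Proposition~\ref{RightUnitUniv}, which already establishes $\mu * \omega_N = 0$ for every $\mu \in J^u(N)$ and hence $\mu * (\epsilon^u_\G - \omega_N) = \mu$, and to prove the mirror property $(\epsilon^u_\G - \omega_N) * \mu = \mu$ using two-sided invariance of $N$. Unwinding the convolution, this is equivalent to showing
\begin{equation*}
(\omega_N * \mu)(a) = \mu((\omega_N \otimes \id)\Delta^u_\G(a)) = 0 \qquad \text{for all } a \in C^u(\G),\ \mu \in J^u(N),
\end{equation*}
which in turn reduces to verifying that $E^L_N := (\omega_N \otimes \id)\Delta^u_\G$ maps $C^u(\G)$ into $C^u(N)$.

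The key technical input, which is where bilateral invariance of $N$ genuinely enters, is the dichotomy that $\pi(\omega_N) \in \{0, I_{n_\pi}\}$ for each $\pi \in \mathrm{Irr}(\G)$, paralleling Wang's characterisation of normality described in Remark~\ref{NormalSGMin}. To see this, fix $\pi$ and work in a basis diagonalising $\pi(\omega_N)$ as in Lemma~\ref{RepsQSubs}. If some $u^\pi_{i,j} \in \mathrm{Pol}(N)$, then invariance forces $\Delta(u^\pi_{i,j}) = \sum_t u^\pi_{i,t} \otimes u^\pi_{t,j} \in N \overline{\otimes} N$; applying $E^R_N \otimes E^R_N$ to this identity and comparing coefficients via the linear independence of $\{u^\pi_{a,b}\}$ yields $\pi(\omega_N)_{t,t}\,\pi(\omega_N)_{j,j} = 1$ for every $t$, forcing $\pi(\omega_N) = I_{n_\pi}$. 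Otherwise, no $u^\pi_{i,j}$ lies in $\mathrm{Pol}(N)$, and the identity $E^R_N(u^\pi_{i,j}) = \sum_t u^\pi_{i,t}\,\pi(\omega_N)_{t,j} \in \mathrm{Pol}(N)$ combined with linear independence forces $\pi(\omega_N)_{t,j} = 0$ for all $t$, hence $\pi(\omega_N) = 0$.

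With this dichotomy in hand, one computes directly
\begin{equation*}
E^L_N(u^\pi_{i,j}) = \sum_t \omega_N(u^\pi_{i,t})\, u^\pi_{t,j} = \sum_t \pi(\omega_N)_{i,t}\, u^\pi_{t,j},
\end{equation*}
which is either $u^\pi_{i,j} \in \mathrm{Pol}(N)$ (when $\pi(\omega_N) = I_{n_\pi}$, in which case all matrix coefficients of $\pi$ lie in $\mathrm{Pol}(N)$) or $0$. Extending by density, $E^L_N(C^u(\G)) \subseteq C^u(N)$, so $\mu(E^L_N(a)) = 0$ for $\mu \in J^u(N) = C^u(N)^\perp$. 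Thus $\omega_N * \mu = 0$ and $(\epsilon^u_\G - \omega_N) * \mu = \mu$; combined with the right-identity conclusion of Proposition~\ref{RightUnitUniv}, $\epsilon^u_\G - \omega_N$ is a two-sided identity for $J^u(N)$. The main obstacle is the dichotomy on $\pi(\omega_N)$, which requires the careful propagation argument through invariance and linear independence of matrix coefficients; once that is secured, the identity statement is formal.
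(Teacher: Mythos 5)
Your proof is correct and follows essentially the same route as the paper, which simply asserts that "a similar proof to Proposition \ref{RightUnitUniv}" shows $\epsilon^u_\G - \omega_N$ is also a left identity; you have supplied exactly that mirrored argument, with the left conditional expectation $(\omega_N\otimes\id)\Delta^u_\G$ in place of the right one. The dichotomy $\pi(\omega_N)\in\{0,I_{n_\pi}\}$ that you rederive by hand is indeed the key point where invariance enters, and it is also available directly from the paper's Corollary \ref{twoidealsinL^1} together with Lemma \ref{RepsQSubs} and Corollary \ref{RepsQSubsDense}.
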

\begin{proof}
    A similarly proof to Proposition \ref{RightUnitUniv} shows $\epsilon_\G^u - \omega_N$ is also a left identity.\\
\end{proof}
Accordingly, we will denote the right (or two--sided when appropriate) identity of $J^u(N)$ by $e^u$. Notice then that
$$J^u(N) = J^u(N)*e^u\subseteq M^u(\G)*e^u\subseteq J^u(N),$$
so
$$J^u(N) = M^u(\G)*e^u.$$
A natural question to ask is, when can we approximate $e^u$ from $J^1(N)$? In particular, how does this relate to the existence of a brai in $J^1(N)$? The answer is as follows.
\begin{thm}\label{Wk*IdealsBrais}
    Let $N$ be a compact quasi--subgroup. If $J^1(N)$ has a brai then $\overline{J^1(N)}^{wk*} = J^u(N)$, where the weak* topology is the one induced by $C^u(\G)$.
\end{thm}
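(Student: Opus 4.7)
My plan is to extract an idempotent $e\in M^u(\G)$ from the brai, realize $\overline{J^1(N)}^{wk*}$ as the principal weak*-closed left ideal $M^u(\G)*e$, and then identify that ideal with $J^u(N) = M^u(\G)*e^u$. The easy containment $\overline{J^1(N)}^{wk*} \subseteq J^u(N)$ is immediate: every $f \in J^1(N) = N_\perp \subseteq L^1(\G)$ lifts via $\Gamma_\G^*$ to a functional on $C^u(\G)$ that vanishes on $C^u(N)$ (because $\Gamma_\G(C^u(N)) = C^r(N) \subseteq N$), so $J^1(N) \subseteq J^u(N)$, and $J^u(N)$ is weak*-closed.

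For the nontrivial containment, let $(f_i) \subseteq J^1(N)$ be the brai with $\|f_i\| \le C$. Banach--Alaoglu in $M^u(\G) = C^u(\G)^*$ lets me pass to a subnet with $f_i \to e$ weak*, forcing $e \in \overline{J^1(N)}^{wk*} \subseteq J^u(N)$. Since convolution on $M^u(\G)$ is separately weak*-continuous (being the preadjoint of right multiplication on $C^u(\G)$), the brai condition $\mu * f_i \to \mu$ in norm for $\mu \in J^1(N)$, combined with $\mu * f_i \to \mu * e$ weak*, forces $\mu * e = \mu$ for every $\mu \in J^1(N)$. Specialising to $\mu = f_j$ and taking a weak* limit in $j$ produces $e*e = e$, so $e$ is an idempotent in $J^u(N)$.

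Next I verify that $J^1(N)$ is in fact a left ideal of the larger algebra $M^u(\G)$. For $\mu \in M^u(\G)$, $f \in J^1(N)$, and $x \in N$, the identity $(\mu * f)(x) = f(x*\mu)$ combined with right invariance of the coideal $N$ under the $M^u(\G)$-action (so $x*\mu \in N$) gives $f(x*\mu) = 0$. Therefore $\mu * f_i \in J^1(N)$ for every $\mu \in M^u(\G)$, and weak*-continuity of left convolution by $\mu$ yields $\mu * e \in \overline{J^1(N)}^{wk*}$, whence $M^u(\G)*e \subseteq \overline{J^1(N)}^{wk*}$. Idempotency of $e$ identifies $M^u(\G)*e = \{\mu : \mu * e = \mu\}$, the kernel of the weak*-continuous map $\mu \mapsto \mu - \mu * e$; this is weak*-closed and contains $J^1(N)$, giving $\overline{J^1(N)}^{wk*} \subseteq M^u(\G)*e$, and hence $\overline{J^1(N)}^{wk*} = M^u(\G)*e$.

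It remains to match $M^u(\G)*e$ with $J^u(N) = M^u(\G)*e^u$. One containment is clear from $e \in J^u(N)$; for the other, given $\mu \in J^u(N)$ we have $\mu = \mu * e^u$, so $\mu * e = (\mu * e^u) * e = \mu * (e^u * e)$ by associativity, and if we can establish $e^u * e = e^u$ then $\mu * e = \mu * e^u = \mu$, placing $\mu$ in $M^u(\G)*e$. Establishing this identification of the brai-idempotent $e$ with the canonical right identity $e^u = \epsilon_\G^u - \omega_N$ of $J^u(N)$ is the main obstacle. The approach I plan to follow is to analyse $e^u * f_i = f_i - \omega_N * f_i$: since $\omega_N * f_i$ lies in $J^1(N)$ by the left-ideal property just proved, $(e^u * f_i)$ is a bounded net in $J^1(N)$ whose weak* cluster point is $e^u * e$, and by comparing the right-identity roles played by $e$ and $e^u$ on $J^u(N)$ (and exploiting that $e^u * f_i$ itself serves as a brai for $J^1(N)$) one should be able to force $e^u * e = e^u$ via a uniqueness-of-cluster-points argument, completing the proof.
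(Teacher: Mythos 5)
Your setup is sound and runs parallel to the paper's: extracting a weak$^*$ cluster point $e$ of the brai, showing $e*e=e$ and $\mu*e=\mu$ for $\mu\in J^1(N)$, verifying that $J^1(N)$ absorbs left convolution by $M^u(\G)$ (your route via $(\mu*f)(x)=f(x*\mu)$ and invariance of the coideal is fine, and arguably cleaner than the paper's, which instead writes $\mu*f=\lim_j \mu*(f*e_j)$ and uses that $J^1(N)$ is a left ideal of $L^1(\G)$), and concluding $\overline{J^1(N)}^{wk*}=M^u(\G)*e$. The paper records the slightly different identity $\overline{J^1(N)}^{wk*}=J^u(N)*e$, but that difference is cosmetic.

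The genuine gap is exactly where you flag "the main obstacle": the convolution identity tying $e$ to $e^u=\epsilon^u_\G-\omega_N$. Neither of your sketched routes closes it. First, "comparing the right-identity roles played by $e$ and $e^u$ on $J^u(N)$" is circular: $e$ is only known to act as a right identity on $\overline{J^1(N)}^{wk*}=M^u(\G)*e$, and the assertion that it acts as a right identity on all of $J^u(N)$ \emph{is} the containment $J^u(N)\subseteq M^u(\G)*e$ you are trying to prove. Second, the uniqueness-of-cluster-points idea fails: weak$^*$ cluster points of brais of a left ideal are not unique, and two idempotents $p,q$ that are both right identities for the same weak$^*$-closed left ideal containing them only satisfy $p*q=p$ and $q*p=q$, which does not force $p=q$; your net $e^u*f_i$ produces the cluster point $e^u*e$ with the same formal properties as $e$, and nothing in that argument pins it to $e^u$. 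The paper supplies the missing mechanism by leaving the measure algebra entirely: it sets $\omega_N^r=\epsilon^u_\G-e$, notes that $f*\omega_N^r=0$ for every $f\in J^1(N)=N_\perp$ (since $e$ is a right identity there), deduces by bipolarity that the multiplier $M^l_{\omega_N^r}$ maps $L^\infty(\G)$ into $N$, composes with the conditional expectation $M^l_{\omega_N}$ (which is the identity on $N$), and then invokes injectivity of $\mu\mapsto M^l_\mu$ to convert the operator identity $M^l_{\omega_N}\circ M^l_{\omega_N^r}=M^l_{\omega_N^r}$ into the convolution identity $(\epsilon^u_\G-e)*(\epsilon^u_\G-e^u)=\epsilon^u_\G-e$, which yields $e*e^u=e^u$ and hence $\overline{J^1(N)}^{wk*}=\overline{J^1(N)}^{wk*}*e^u=J^u(N)*(e*e^u)=J^u(N)$. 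Without some version of this duality argument through $L^\infty(\G)$ and the conditional expectation onto $N$, your proof does not go through.
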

\begin{proof}
    Assume $J^1(N)$ has a brai $(e_j)$ and pass to a weak$^*$ convergent subnet with limit point $e\in\overline{J^1(N)}^{wk*}\subseteq M^r(\G)$. Before proceeding with the proof, we point out some intermediate facts. We will first show $\overline{J^1(N)}^{wk*} = M^r(\G)*e$. Since $L^1(\G)$ is an ideal in $M^r(\G)$, for $\mu\in M^r(\G)$ and $f\in J^1(N)$ we have $\mu*f*e_j\in J^1(N)$ for all $j\in J^1(N)$, from which we conclude that $\mu*f\in J^1(N)$. In particular, we have $\mu*e_j\in J^1(N)$ for all $j$ and so by taking limits, $\mu*e\in \overline{J^1(N)}^{wk*}$.
    
    Next we will show
    \begin{align}
        J^u(N)*e = \overline{J^1(N)}^{wk*}.\label{Wk*IdealEq}
    \end{align}
    Clearly $J^1(N)\subseteq J^u(N)$, from which we immediately deduce $e*e^u = e$. Then,
    $$J^1(N)\subseteq J^u(N)*e = J^u(N)*e*e\subseteq M^r(\G)*e = \overline{J^1(N)}^{wk*},$$
    using that $M^r(\G)$ is an ideal in $M^u(\G)$, and so $J^u(N)*e = \overline{J^1(N)}^{wk*}$ as desired.
    
    Set $\omega^r_N = \epsilon^u_\G - e$. For $f\in L^1(\G)$, we have
    $$f\circ M^l_{\omega^r_N} = 0 \iff f*\omega_N^r = 0 \iff f*e = f.$$
    So, $f\circ M^l_{\omega_N^r} = 0$ for all $f\in J^1(N)$, which implies $M^l_{\omega_N^r}(L^\infty(\G))\subseteq N$. Then, since $M^l_{\omega_N}|_N = \id_N$,
    $$M^l_{\omega_N^r*\omega_N} = M^l_{\omega_N}\circ M^l_{\omega_N^r} = M^l_{\omega_N^r}.$$
    Recall from the proof of Proposition \ref{RightUnitUniv} that $\omega_N = \epsilon^u_\G - e^u$. Then, by injectivity of $\mu\mapsto M^l_{\mu}$ we get
    $$(\epsilon^u_\G - e)*(\epsilon_\G^u - e^u) = \omega_N*\omega_N^r = \omega_N^r = \epsilon_\G^u - e,$$
    from which we have $e*e^u = e^u$. Therefore, using \eqref{Wk*IdealEq},
    $$\overline{J^1(N)}^{wk*} = \overline{J^1(N)}^{wk*}*e^u = J^u(N)*e*e^u = J^u(N)*e^u = J^u(N).$$
\end{proof}
Our question of weakly approximating elements of $J^u(N)$ by elements of $J^1(N)$ turns out to relate to coamenability of $\G$.
\begin{thm}\label{CoamenUniv}
    Let $\G$ be a CQG and $N$ a compact quasi--subgroup. If $\G$ is coamenable then $\overline{J^1(N)}^{wk*} = J^u(N)$. Conversely, if $\omega_N\in M^r(\G)$ and $\overline{J^1(N)}^{wk*} = J^u(N)$, then $\G$ is coamenable.
\end{thm}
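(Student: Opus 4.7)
The plan is to treat the two implications separately, exploiting the right unit $e^u = \epsilon^u_\G - \omega_N \in J^u(N)$ furnished by Proposition \ref{RightUnitUniv}, together with the identification $J^1(N) = L^1(\G) \cap J^u(N)$ inside $M^u(\G)$. This identification follows from the inclusion $L^1(\G) \subseteq M^r(\G) \subseteq M^u(\G)$: for $f \in L^1(\G)$, viewed as $f \circ \Gamma_\G \in M^u(\G)$, vanishing on $C^u(N)$ amounts to vanishing on $\Gamma_\G(C^u(N)) = C^r(N)$, which by the weak* density $\overline{C^r(N)}^{wk*} = N$ is the same as vanishing on $N$, i.e., membership in $J^1(N) = N_\perp$.

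For the forward direction, coamenability of $\G$ provides a bounded approximate identity $(e_i) \subseteq L^1(\G)$ which, after passing to a weak* convergent subnet, converges to $\epsilon^u_\G$ in $M^u(\G)$; any weak* cluster point of the bai must act as a left identity on $L^1(\G)$ and hence coincides with the universal counit by standard density/uniqueness considerations at the coamenable level. Given $\mu \in J^u(N)$, the counit identity yields
$$
(e_i * \mu)(a) = e_i\bigl((\id\otimes\mu)\Delta^u_\G(a)\bigr) \longrightarrow \epsilon^u_\G\bigl((\id\otimes\mu)\Delta^u_\G(a)\bigr) = \mu(a)
$$
for every $a \in C^u(\G)$, so $e_i * \mu \to \mu$ weak* in $M^u(\G)$. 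Since $L^1(\G)$ is a (two-sided) ideal in $M^u(\G)$ and $J^u(N)$ is a left ideal, $e_i * \mu \in L^1(\G)\cap J^u(N) = J^1(N)$, witnessing $\mu \in \overline{J^1(N)}^{wk*}$.

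For the converse, the inclusion $J^1(N) \subseteq L^1(\G) \subseteq M^r(\G)$ together with the weak*-closedness of $M^r(\G)$ in $M^u(\G)$ (its image under $\Gamma_\G^*$ equals $\ker(\Gamma_\G)^\perp$, hence is weak* closed) gives $J^u(N) = \overline{J^1(N)}^{wk*} \subseteq M^r(\G)$. Thus the right unit $e^u$ lies in $M^r(\G)$, and combining with the hypothesis $\omega_N \in M^r(\G)$ yields $\epsilon^u_\G = e^u + \omega_N \in M^r(\G)$. This is exactly the statement that the universal counit factors through the reducing morphism, producing a character on $C^r(\G)$, which is the Bédos-Tuset criterion for coamenability (cf. the references cited after Proposition \ref{APai}).

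I expect the forward direction to carry most of the technical weight: the delicate step is justifying that the bai itself (possibly after taking a subnet) converges weak* to $\epsilon^u_\G$ in $M^u(\G)$, rather than to some smaller idempotent, and ensuring that right convolution with $\mu$ is sufficiently continuous to transport this convergence through the slice $(\id\otimes\mu)\Delta^u_\G$. The converse direction is essentially formal, distilled from Proposition \ref{RightUnitUniv}, the weak*-closed embedding $M^r(\G) \subseteq M^u(\G)$, and the standard characterization of coamenability in terms of the reduced counit.
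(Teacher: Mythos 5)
Your proposal is correct, and your converse is essentially the paper's argument verbatim: $e^u\in J^u(N)=\overline{J^1(N)}^{wk*}\subseteq\overline{L^1(\G)}^{wk*}=M^r(\G)$, so $\epsilon^u_\G=\omega_N+e^u\in M^r(\G)$, which gives coamenability via the existence of a reduced counit. The forward implication, however, takes a genuinely different route. The paper does not touch a bounded approximate identity at all: it invokes the compliance machinery of White (\cite[Theorem 5.10]{Wh18}, as packaged in Theorem \ref{wk*IdealsMeasures}), which says that for compliant $L^1(\G)$ the weak$^*$ closure of a closed left ideal $I\subseteq L^1(\G)$ in $M(\G)$ is the unique weak$^*$ closed left ideal $J$ with $J\cap L^1(\G)=I$; the whole proof then reduces to verifying $J^u(N)\cap L^1(\G)=J^1(N)$, exactly the identification you establish at the outset via $\Gamma_\G(C^u(N))=C^r(N)$ and normality. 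Your direct approximation argument --- convolving $\mu\in J^u(N)$ against a net in $L^1(\G)$ converging weak$^*$ to $\epsilon^u_\G$ and using that $L^1(\G)$ is an ideal in $M^u(\G)$ while $J^u(N)$ is a left ideal --- buys self-containedness (no appeal to compliance) at the cost of the step you rightly flag as delicate: showing a weak$^*$ cluster point $e$ of the bai equals $\epsilon^u_\G$. That step does close: $e*f=f$ for all $f\in L^1(\G)$ by norm convergence of $e_i*f$ tested against $C^u(\G)$, and $\mu\mapsto m_\mu$ is injective on $M^u(\G)$ for coamenable $\G$ (the paper itself uses this injectivity in Theorem \ref{Wk*IdealsBrais}); alternatively you can bypass the bai entirely, since coamenability gives $\epsilon^u_\G=\epsilon^r_\G\in M^r(\G)=\overline{L^1(\G)}^{wk*}$, and any net in $L^1(\G)$ converging weak$^*$ to $\epsilon^u_\G$ suffices for your computation.
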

\begin{proof}
    Assume $\G$ is coamenable. We first note that $C^u(N) = C^r(N)$, so we will simply write $C(N)$. Because of \cite[Theorem 5.10]{Wh18} (cf. Theorem \ref{wk*IdealsMeasures}), it suffices to show $J^u(N)\cap L^1(\G) = J^1(N)$. First, clearly $J^1(N)\subseteq J^u(N)$. For the reverse containment, take $a\in C(\G)$. Then for $a\in C(N)$ and $f\in J^u(N)\cap L^1(\G)$,
    \begin{align*}
        0 = T^u_N(f)(a) &= f(a) = T_N(f)(a)
    \end{align*}
    which implies $f(N) = 0$ by weak$^*$ density of $C(N)$ in $N$ and normality of $f$.
    
    Conversely, 
    $$M^r(\G) = \overline{L^1(\G)}^{wk*}\ni \omega_N + e^u  = \epsilon^u_\G,$$
    where the equality was noted in the proof of Proposition \ref{RightUnitUniv}. This implies coamenability of $\G$.
\end{proof}
A coamenability result we are looking for presents itself as follows.
\begin{cor}\label{CoamenBAIIdeal}
    Let $\G$ be a CQG and $N$ an compact quasi--subgroup such that $\omega_N\in M^r(\G)$. Then $J^1(N)$ has a brai if and only if $\G$ is coamenable.
\end{cor}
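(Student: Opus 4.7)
The corollary is a quick consequence of the theorems immediately preceding it, together with a short construction in the coamenable direction. My plan is as follows.

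For the forward implication, suppose $J^1(N)$ admits a brai. Then Theorem \ref{Wk*IdealsBrais} gives $\overline{J^1(N)}^{wk*} = J^u(N)$ (with weak$^*$ closure taken in $M^u(\G)$). Combined with the standing hypothesis that $\omega_N\in M^r(\G)$, the converse direction of Theorem \ref{CoamenUniv} immediately yields that $\G$ is coamenable. So this direction is essentially bookkeeping.

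For the converse, suppose $\G$ is coamenable. Then $M^u(\G) = M^r(\G) = M(\G)$, and $L^1(\G)$ admits a bai $(e_\alpha)$. Recall from Proposition \ref{RightUnitUniv} that $e^u := \epsilon^u_\G - \omega_N$ is a right identity for $J^u(N)$, and that under our coamenability assumption $e^u \in M(\G)$. Set
\[
f_\alpha := e_\alpha * e^u = e_\alpha - e_\alpha * \omega_N.
\]
Since $L^1(\G)$ is an ideal in $M(\G)$, each $f_\alpha$ lies in $L^1(\G)$, and by the explicit description $J^1(N) = \overline{\{f - f*\omega_N : f\in L^1(\G)\}}^{\|\cdot\|_1}$ noted after the definition of a compact quasi-subgroup, we have $f_\alpha \in J^1(N)$. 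Boundedness of $(f_\alpha)$ follows from boundedness of $(e_\alpha)$ and the bound $\|e^u\|\leq 1 + \|\omega_N\|$.

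It remains to check the brai property. For $h \in J^1(N) \subseteq J^u(N)$, we have $h = h * e^u$ because $e^u$ is a right identity of $J^u(N)$. Since $(e_\alpha)$ is a bai for $L^1(\G)$ and $h*e^u \in L^1(\G)$ (as $L^1(\G)$ is an ideal in $M(\G)$),
\[
h * f_\alpha = h * e_\alpha * e^u \longrightarrow h * e^u = h
\]
in norm, which shows $(f_\alpha)$ is a brai for $J^1(N)$.

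There is no real obstacle here; both directions use only facts already established. The only thing to be careful about is using coamenability to put $e^u = \epsilon^u_\G - \omega_N$ inside $M(\G)$ so that $f_\alpha$ genuinely lies in $L^1(\G)$, and to have an available bai $(e_\alpha)$ of $L^1(\G)$ with which to approximate the right identity $e^u$ of $J^u(N)$ from within $J^1(N)$.
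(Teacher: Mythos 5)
Your proposal is correct. The forward implication is exactly the paper's argument: Theorem \ref{Wk*IdealsBrais} gives $\overline{J^1(N)}^{wk*} = J^u(N)$, and then the hypothesis $\omega_N\in M^r(\G)$ feeds into the converse direction of Theorem \ref{CoamenUniv}. For the converse you take a genuinely different route. The paper disposes of it in one line by citing the general Banach-algebra fact (from Forrest's thesis) that when $A$ has a bai, a closed left ideal $J$ has a brai if and only if $J^\perp$ is right invariantly complemented in $A^*$; here the required right $L^1(\G)$-module projection $L^\infty(\G)\to N = J^1(N)^\perp$ is exactly the conditional expectation $E^R_N$ built into the definition of a compact quasi-subgroup. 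You instead construct the brai explicitly as $f_\alpha = e_\alpha*(\epsilon^u_\G - \omega_N) = e_\alpha - e_\alpha*\omega_N$, which lands in $J^1(N)$ by the description $J^1(N) = \overline{\{f - f*\omega_N : f\in L^1(\G)\}}^{\|\cdot\|_1}$, is bounded since $L^1(\G)$ is an ideal in $M^u(\G)$, and satisfies $h*f_\alpha = (h*e_\alpha)*e^u \to h*e^u = h$ using the right-unit property of $e^u$ on $J^u(N)\supseteq J^1(N)$ from Proposition \ref{RightUnitUniv}. Your version is self-contained and makes the brai visible (it is essentially the projection argument unwound), at the cost of a little more verification; the paper's version is shorter but leans on an external reference. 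Both are valid, and neither converse actually uses the hypothesis $\omega_N\in M^r(\G)$, which is only needed for the forward direction.
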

\begin{proof}
    If $J^1(N)$ has a brai, then apply Theorems \ref{Wk*IdealsBrais} and \ref{CoamenUniv} to get coamenability of $\G$. The converse is a special case of the following more general fact: if $A$ is a Banach algebra with a bai, then a closed left ideal $J$ has a brai if and only if there is a right $A$--module projection $A^*\to J^\perp$ (cf. \cite[4.1.4 Pg. 42]{For87}).
\end{proof}
From Proposition \ref{CoamenQuots} and Corollary \ref{CoamenBAIIdeal}, we also deduce the following.
\begin{cor}
    Let $\G$ be a CQG and $\H$ a closed quantum subgroup such that $\G/\H$ is a coamenable quotient. Then $\G$ is coamenable if and only if $J^1(\G,\H)$ has a brai.
\end{cor}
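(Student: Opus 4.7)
The plan is to specialize Corollary \ref{CoamenBAIIdeal} to the compact quasi-subgroup arising from the quotient, using Proposition \ref{CoamenQuots} to verify the hypothesis on the idempotent state. Setting $N = L^\infty(\G/\H)$, we have that $N$ is a compact quasi-subgroup of $\G$ (as noted just before Remark on compact quasi-subgroups and quotients), with associated idempotent state $\omega_N = h_\H^u \circ \pi_\H^u \in M^u(\G)$. By convention, $J^1(\G, \H) = J^1(N) = N_\perp$.

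First I would apply Proposition \ref{CoamenQuots}: since $\G/\H$ is assumed to be a coamenable quotient, we obtain $\omega_N \in M^r(\G) \subseteq M^u(\G)$. This is precisely the hypothesis needed to feed into Corollary \ref{CoamenBAIIdeal}. Then I would invoke Corollary \ref{CoamenBAIIdeal} applied to the compact quasi-subgroup $N$: the condition $\omega_N \in M^r(\G)$ is now satisfied, so the corollary gives that $J^1(N)$ has a brai if and only if $\G$ is coamenable. Translating back through the notational identification $J^1(\G, \H) = J^1(L^\infty(\G/\H))$ yields the claimed equivalence.

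There is really no obstacle here; the result is a direct specialization. The only thing to be careful about is confirming that the notation $J^1(\G,\H)$ used in the statement refers to $J^1(L^\infty(\G/\H)) = L^\infty(\G/\H)_\perp$, which is consistent with the convention established for compact quasi-subgroups in Section $4.1$, and that $L^\infty(\G/\H)$ is indeed a compact quasi-subgroup (with the normal right $L^1(\G)$-module conditional expectation $x \mapsto (\id \otimes \omega_N)\Delta_\G(x)$), both of which are recorded explicitly in the discussion preceding Proposition \ref{CoamenQuots}. The proof itself is thus a one-line combination of the two cited results.
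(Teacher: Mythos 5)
Your proof is correct and is exactly the paper's argument: the corollary is stated immediately after the sentence ``From Proposition \ref{CoamenQuots} and Corollary \ref{CoamenBAIIdeal}, we also deduce the following,'' i.e.\ the paper likewise obtains $\omega_{L^\infty(\G/\H)}\in M^r(\G)$ from coamenability of the quotient and then specializes Corollary \ref{CoamenBAIIdeal} to $N=L^\infty(\G/\H)$. Nothing further is needed.
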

A compact quasi--subgroup $N$ is {\bf open} if $\omega_N\in L^1(\G)$. It was shown in \cite{Soltan} that the open quasi--subgroups of a CQG are the finite dimensional right coideals. Using Corollary \ref{CoamenBAIIdeal} we obtain the following.
\begin{cor}
    Let $\G$ be a CQG and $N$ an open compact quasi--subgroup. Then $\G$ is coamenable if and only if $J^1(N)$ has a brai.
\end{cor}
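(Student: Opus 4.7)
The plan is to observe that this corollary is a direct specialization of Corollary \ref{CoamenBAIIdeal}, so the work reduces to verifying that the hypothesis $\omega_N \in M^r(\G)$ is automatic under the openness assumption. Since $N$ is open means by definition that $\omega_N \in L^1(\G)$, and we have the inclusion $L^1(\G) \subseteq M^r(\G) = \overline{L^1(\G)}^{wk*}$ (in fact $L^1(\G)$ sits inside $M^r(\G)$ as an ideal through the canonical map), we immediately get $\omega_N \in M^r(\G)$.

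With this hypothesis in hand, I would invoke Corollary \ref{CoamenBAIIdeal} verbatim: the forward direction (coamenability of $\G$ implies $J^1(N)$ has a brai) follows from the general Banach algebraic fact recalled there, that a closed left ideal of a Banach algebra with bai has a brai precisely when its annihilator is right invariantly complemented in the dual, which in our situation is guaranteed by the existence of the right $L^1(\G)$-module conditional expectation $E_N^R : L^\infty(\G) \to N$ built into the definition of a compact quasi-subgroup. The reverse direction (brai in $J^1(N)$ implies $\G$ is coamenable) follows by chaining Theorem \ref{Wk*IdealsBrais} (brai yields $\overline{J^1(N)}^{wk*} = J^u(N)$) with the converse half of Theorem \ref{CoamenUniv}, which requires exactly the assumption $\omega_N \in M^r(\G)$ that we have just verified.

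Since all the substantive work has already been carried out in the preceding results, there is no real obstacle to overcome here; the only step that deserves explicit mention is the verification that openness gives $\omega_N \in M^r(\G)$, which itself is immediate from the definition and the standing identification $M^r(\G) = \overline{L^1(\G)}^{wk*}$. I would therefore present the proof as a one-line deduction from Corollary \ref{CoamenBAIIdeal}, noting only that the openness hypothesis is precisely what is needed to meet the requirement $\omega_N \in M^r(\G)$ of that corollary.
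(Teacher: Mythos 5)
Your proposal is correct and matches the paper's argument exactly: the paper derives this corollary directly from Corollary \ref{CoamenBAIIdeal}, the only point being that openness gives $\omega_N\in L^1(\G)\subseteq M^r(\G)$, which is precisely the verification you supply.
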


\subsection{Quantum Cosets of Compact Quasi--Subgroups}
Our main result of the previous subsection is a generalization of the result of Forrest \cite{For} that for a discrete group $\Gamma$, the ideal $I(\Lambda)$ has a bai if and only if $\Gamma$ is amenable, restricted to the case where $\Lambda$ is an {\it amenable} subgroup of $G$ (so that $1_\Lambda\in B_r(\Gamma)$). The techniques of Forrest exploited the fact that $I(\Lambda)$ has a bai if and only if $I(s\Lambda)$ has a bai and the use of the Hahn-Banach theorem, for $s\in \Gamma\setminus \Lambda$.

As a way to find a version of Corollary \ref{CoamenBAIIdeal} without the condition $\omega_N\in M^r(\G)$, we will consider (invariant) compact quasi-subgroups that ``admit a quantum coset''. Along the way, we achieve a generalization of the fact $I(\Lambda)$ has a bai if and only if $I(s\Lambda)$ has a bai, and so we are able to characterize coamenability of $\G$ in terms of the closed left ideals coming from the ``quantum cosets'' of compact quasi-subgroups. A quantum coset will turn out to be a translation of a compact quasi-subgroup by an element of the intrinsic group of $\G$.
\begin{defn}
    The group
    $$Gr(\G) = \{x\in L^\infty(\G)^{-1} : \Delta_\G(x)= x\otimes x\},$$
    where $L^\infty(\G)^{-1}$ is the set of invertibles in $L^\infty(\G)$, is called the {\bf intrinsic group} of $\widehat{\G}$.
\end{defn}
\begin{rem}
    Our reference for the following discussion is \cite{Kal3}. We actually have that each element of $Gr(\G)$ is unitary and $Gr(\G)$ is a locally compact group when equipped with the weak$^*$ topology. It is straightforward seeing that $Gr(\G) = sp(L^1(\G))$. Alternatively, one can identify $Gr(\G)\subseteq Irr(\G)$ as the $1$--dimensional unitary representations, so in particular, we have $Gr(\G)\subseteq \Pol(\G)$. Note that whenever $\G$ is compact, the von Neumann algebra generated by $Gr(\G)$ is of the form $VN(\Gamma)$ for some discrete group $\Gamma$. We will abuse notation and simply write $Gr(\G) = \Gamma$.
\end{rem}
When $\Gamma$ is discrete and $\Lambda$ is a subgroup, we have
$$\lambda(s)VN(\Lambda) = \overline{\Span\{\lambda(st) : t\in \Lambda\}}^{wk*} = L^\infty(\widehat{\Gamma}, s\Lambda).$$
Then $I(s\Lambda) = (\lambda(s)VN(\Lambda))_\perp$. So, more generally, for a CQG $\G$ and compact quasi-subgroup $N\subseteq L^\infty(\G)$, we consider $sN$, where $s\in Gr(\G)$, to be a {\bf quantum coset} of $N$.

We will proceed by writing down a series of lemmas that allow us to use a generalization of Forrest's argument in \cite{For}, as alluded to at the start of the subsection. To begin, the following essentially says that the quantum cosets of a compact quasi-subgroup are disjoint from the compact quasi-subgroup. This is reminiscent of the fact that proper cosets of a subgroup are disjoint with the subgroup.
\begin{lem}\label{DisjointCosets}
    Let $N$ be a compact quasi--subgroup. Then for $x\in Gr(\G)$,
    $$xN\cap N = \begin{cases} N &\text{if} ~ x\in N\\ \{0\} &\text{otherwise}\end{cases}.$$
\end{lem}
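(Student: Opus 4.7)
The two cases $x \in N$ and $x \notin N$ are handled quite differently. For $x \in N$ the result follows from the von Neumann algebra structure of $N$; for $x \notin N$ I produce a functional obstruction via the Haar state and the identity $E_N^R(x) = \omega_N(x)\,x$, which is immediate from group-likeness of $x$.

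The case $x \in N$ is straightforward: every element of $Gr(\G)$ is unitary (as noted in the remark preceding the lemma), so $x^{-1} = x^* \in N$, which gives $xN \subseteq N$ from closure of $N$ under products, and $N = x(x^{-1}N) \subseteq xN$ for the reverse containment. Hence $xN = N$. For the case $x \notin N$, I will show $xN \cap N = \{0\}$. Applying the one-dimensional representation $\pi_x$ (for which $\pi_x(\mu) = \mu(x)$) to the identity $\omega_N * \omega_N = \omega_N$ yields $\omega_N(x)^2 = \omega_N(x) \in \{0,1\}$; and by Lemma~\ref{RepsQSubs}, the condition $\omega_N(x) = 1$ characterizes $x \in N$. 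So here $\omega_N(x) = 0$, and combined with $\Delta_\G(x) = x \otimes x$,
\[
E_N^R(x) = (\id \otimes \omega_N)(x\otimes x) = \omega_N(x)\,x = 0.
\]
Now pass to the GNS inner product $\langle a, b\rangle = h_\G(b^*a)$, which is an inner product because $h_\G$ is faithful on $L^\infty(\G)$. For any $y = xn \in xN\cap N$ with $n \in N$, we have $y^* \in N$ (since $N$ is $*$-closed). Invoking that $E_N^R$ is an $N$-bimodule map (Tomiyama's theorem for normal conditional expectations onto von Neumann subalgebras) and that $h_\G \circ E_N^R = h_\G$ (left invariance of the Haar weight applied to $E_N^R = (\id\otimes\omega_N)\Delta_\G$),
\[
h_\G(y^*y) = h_\G(y^*xn) = h_\G(E_N^R(y^*xn)) = h_\G(y^*\,E_N^R(x)\,n) = 0,
\]
so faithfulness of $h_\G$ forces $y = 0$.

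\emph{Main obstacle.} The proof hinges on recognizing the right combination of structural facts: the collapse $E_N^R(x) = \omega_N(x)\,x$ for group-like $x$ (immediate from $\Delta_\G(x) = x \otimes x$), and the $N$-bimodule property of $E_N^R$. The latter is standard for normal conditional expectations onto von Neumann subalgebras but is not highlighted in the paper's presentation of $E_N^R$ purely as a right $L^1(\G)$-module map, so it must be invoked by appealing to Tomiyama's theorem.
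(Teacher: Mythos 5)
Your proof is correct and follows essentially the same route as the paper's: both rest on $\omega_N(x)\in\{0,1\}$ with $\omega_N(x)=1$ iff $x\in N$ (Lemma~\ref{RepsQSubs}), the collapse $E_N^R(x)=\omega_N(x)\,x$ from group-likeness of $x$, and the $N$-module property of the conditional expectation. The only difference is that your final step detours through faithfulness of $h_\G$ and the identity $h_\G\circ E_N^R=h_\G$, which is unnecessary: for $y=xn\in xN\cap N$ one gets directly $y=E_N^R(y)=E_N^R(x)n=0$, exactly as in the paper.
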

\begin{proof}
    From Lemma \ref{RepsQSubs} we know that $\omega_N(x) = 1$ if $x\in N$ and $\omega_N(x) = 0$ if $x\notin N$. Then the equation
    $$(\id\otimes\omega_N)\Delta_\G(x) = \omega_N(x)x$$
    tells us $E^R_N(x) = x$ if $x\in N$ and $E^R_N(x) = 0$ otherwise. Then for $y\in N$, using that $E^R_N$ is a conditional expectation, we have
    $$xy = E^R_N(xy) = E^R_N(x)y$$
    if and only if $E^R_N(x) = x$.
\end{proof}
Given $x\in L^\infty(\G)$, we denote $f\cdot x\in L^1(\G)$ as the action such that $(f\cdot x)(y) = f(xy)$ for all $y\in L^\infty(\G)$. If $x\in Gr(\G)$, then, since $x$ is a unitary, $\cdot x : L^1(\G)\to L^1(\G)$ is an isometric algebra automorphism.
\begin{lem}\label{Ideals}
    Let $X\trianglelefteq_r L^\infty(\G)$ be a right invariant weak$^*$ closed subspace. For $x\in Gr(\G)$,
    $$X_\perp\cdot x = (xX)_\perp$$
    is a closed left ideal. If $X_\perp$ is two--sided, then $X_\perp\cdot x$ is two--sided.
\end{lem}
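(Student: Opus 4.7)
The plan is to verify three things in order: that $xX$ is a weak$^*$-closed right invariant subspace of $L^\infty(\G)$, so that $(xX)_\perp$ is automatically a closed left ideal by the correspondence recalled in Section~$2.3$; that $X_\perp \cdot x = (xX)_\perp$ as sets; and that if $X_\perp$ is two-sided then so is $X_\perp \cdot x$.

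The key computational input is the group-like identity $\Delta_\G(x) = x \otimes x$ together with $\Delta_\G$ being a $*$-homomorphism. Expanding $\Delta_\G(xy) = (x \otimes x)\Delta_\G(y)$ and applying $(f \otimes \id)$ for $y \in X$ and $f \in L^1(\G)$, I obtain
\[
(xy) * f = x \cdot \bigl(y * (f \cdot x)\bigr),
\]
so right invariance of $X$ places $y * (f \cdot x) \in X$, forcing $(xy)*f \in xX$. Weak$^*$-closedness of $xX$ follows because left multiplication by the unitary $x$ is weak$^*$-continuous. By the preliminaries, $(xX)_\perp$ is then a closed left ideal in $L^1(\G)$.

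For the equality $X_\perp \cdot x = (xX)_\perp$, I would exploit that $\cdot x : L^1(\G) \to L^1(\G)$ is a bijective isometry (its inverse being $\cdot x^*$, since $x$ is unitary) whose preadjoint is left multiplication by $x$ on $L^\infty(\G)$, itself a bijection. Annihilators transform under such bijective dual pairs in a prescribed manner, identifying the image $X_\perp \cdot x$ with the annihilator of the translate of $X$ by $x$. I would then verify both containments directly from the defining formula $(f \cdot x)(y) = f(xy)$ to pin down the identification with $(xX)_\perp$ precisely.

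For the two-sided case, the assumption that $X_\perp$ is a right ideal as well is equivalent to $X$ being left invariant. A computation mirroring the first step gives $f * (xy) = x \cdot \bigl((f \cdot x) * y\bigr)$, so $xX$ is likewise left invariant, and therefore $(xX)_\perp = X_\perp \cdot x$ is a right ideal in addition to being a left ideal. The main obstacle will be the careful Sweedler-level bookkeeping required to pull the group-like element $x$ through $\Delta_\G$ and to verify cleanly that $\cdot x$ intertwines correctly with the left multiplication action on $L^\infty(\G)$ when one takes annihilators.
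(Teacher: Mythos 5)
Your proposal is correct and takes essentially the same route as the paper: both use the group-like identity $\Delta_\G(x)=x\otimes x$ to show $(xy)*f = x\,(y*(f\cdot x))$ (hence $xX$ is right, and in the two-sided case left, invariant) and then invoke the annihilator correspondence of Section~$2.3$. Your duality argument for $X_\perp\cdot x=(xX)_\perp$ via the bijection $\cdot\,x$ with preadjoint left multiplication by the unitary $x$ is just a more explicit version of the paper's ``Hahn--Banach argument,'' and you even supply the weak$^*$-closedness of $xX$ that the paper leaves implicit.
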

\begin{proof}
    Since $(X_\perp)^\perp = X$, it is clear that $X_\perp\cdot x\subseteq (xX)_\perp$. For $f\in (xX)_\perp$, it can be shown using a Hahn--Banach argument that $f\cdot x^{-1}\in X_\perp$. Then $f = (f\cdot x^{-1})\cdot x\in X_\perp\cdot x$. For the remaining claim, it is easy to see that $X_\perp\cdot x$ is closed. Then for $f\in L^1(\G)$ and $y\in X$,
    $$(yx)*f = (f\otimes \id)(x\otimes x)\Delta_\G(y)\in xX$$
    because $((f\cdot x)\otimes \id)\Delta_\G(y)\in X$. So $xX$ is right invariant, meaning $(xX)_\perp = X_\perp\cdot x$ is a left ideal. If $X$ is also left invariant, then left invariance of $xX$ follows similarly.\\
\end{proof}
\begin{rem}\label{cosetsQSubs}
    Let $N$ be a compact quasi--subgroup. Note that for $x\in Gr(\G)\setminus N $, $xN$ does not contain $1$ and so cannot be a von Neumann algebra and is not a compact quasi--subgroup. We did see, however, in the above lemma that $xN$ is a weak$^*$ closed right invariant subspace of $L^\infty(\G)$. Next we will note $M^l_{\omega\cdot x^{-1}} : L^\infty(\G)\to L^\infty(\G)$ is a projection onto $xN$. To see this, first notice $x\Pol(N)$ is weak$^*$ dense in $xN$ because $L^\infty(\G) \ni y\mapsto xy\in L^\infty(\G)$ is a weak$^*$--weak$^*$ homeomorphic linear bijection. Therefore it suffices to check $(\id\otimes\omega_N\cdot x^{-1})\Delta_\G$ is a projection onto $x\Pol(N)$. For this, if we take $y\in \Pol(N)$,
        $$(\id\otimes\omega_N\cdot x^{-1})\Delta_\G(xy) = x(\id\otimes\omega_N)\Delta_\G(y) = xy$$
        and if we take $y\in\Pol(\G)$,
        $$(\id\otimes\omega_N\cdot x^{-1})\Delta_\G(y) = x(\id\otimes\omega_N)\Delta_\G(x^{-1}y)\in x\Pol(N).$$
        We point out that the idempotent functional $\omega_N\cdot x^{-1}$ is easily seen to be a contractive idempotent. Contractive idempotents and their associated weak$^*$ closed right invariant subspaces were studied in \cite{NSSS13, K18} (at the level LCQGs). While given a contractive idempotent $\omega\in M^u(\G)$, $M^l_\omega(L^\infty(\G))$ is not an algebra, it is a {\bf ternary ring of operators (TRO)}, i.e., whenever $x,y,z\in M^l_\omega(L^\infty(\G))$, $xy^*z\in M^l_\omega(L^\infty(\G))$.
\end{rem}
\begin{lem}\label{SurjectIdeals}
    Let $N$ be a compact quasi--subgroup. For $x\in Gr(\G)\cap (L^\infty(\G)\setminus N)$, $T_N(J^1(N)\cdot x) = N_*$.
\end{lem}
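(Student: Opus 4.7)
The plan is to lift an arbitrary $\psi \in N_*$ through the preadjoint of the conditional expectation $E_N^R$, producing a canonical element of $L^1(\G)$ that both restricts to $\psi$ on $N$ and annihilates $xN$. Since $E_N^R : L^\infty(\G) \to N$ is normal, its preadjoint $(E_N^R)_* : N_* \to L^1(\G)$, defined by $(E_N^R)_*(\psi)(y) = \psi(E_N^R(y))$ for $y \in L^\infty(\G)$, is bounded. For $\psi \in N_*$, set $g := (E_N^R)_*(\psi)$. Because $E_N^R|_N = \id_N$, we have $g(n) = \psi(n)$ for every $n \in N$, i.e., $T_N(g) = \psi$.

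The crux is to verify that $g \in J^1(N) \cdot x$. By Lemma \ref{Ideals}, $J^1(N) \cdot x = (xN)_\perp$, so it suffices to show $g|_{xN} = 0$. Using the formula $E_N^R = (\id \otimes \omega_N)\Delta_\G$ together with the defining property $\Delta_\G(x) = x \otimes x$, we obtain
$$E_N^R(x) = (\id \otimes \omega_N)(x \otimes x) = \omega_N(x) \cdot x.$$
Viewing $x$ as the single matrix coefficient of its one-dimensional irreducible representation, Lemma \ref{RepsQSubs} forces $\omega_N(x) \in \{0,1\}$ with $\omega_N(x) = 1$ if and only if $x \in N$. Since $x \notin N$, we conclude $E_N^R(x) = 0$. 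By Tomiyama's theorem, the normal contractive projection $E_N^R$ is automatically $N$--bimodular, hence
$$E_N^R(xn) = E_N^R(x) \cdot n = 0 \quad \text{for all } n \in N.$$
Therefore $g(xn) = \psi(E_N^R(xn)) = 0$, giving $g|_{xN} = 0$ as required.

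Combining these observations, $g \in J^1(N) \cdot x$ with $T_N(g) = \psi$; since $\psi \in N_*$ was arbitrary, $T_N(J^1(N) \cdot x) = N_*$. The only delicate step is the vanishing $\omega_N(x) = 0$, which is where the hypothesis $x \notin N$ genuinely enters (in agreement with the disjointness statement of Lemma \ref{DisjointCosets}); every other ingredient is a direct consequence of the definition of the preadjoint, bimodularity of a conditional expectation, and Lemma \ref{Ideals}.
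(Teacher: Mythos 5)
Your proof is correct, and it takes a genuinely different route from the paper. The paper argues by duality: using Lemma \ref{DisjointCosets} it shows that no nonzero $y\in N$ annihilates $J^1(N)\cdot x$, so that $T_N(J^1(N)\cdot x)$ is weak(ly) dense in $N_*$, and then invokes the open mapping theorem to upgrade density of the image of this closed subspace to equality. You instead produce an explicit lift: for $\psi\in N_*$ the element $g=\psi\circ E^R_N\in L^1(\G)$ satisfies $T_N(g)=\psi$ because $E^R_N|_N=\id_N$, and lies in $(xN)_\perp=J^1(N)\cdot x$ because $E^R_N(xn)=E^R_N(x)n=\omega_N(x)\,xn=0$ for $x\notin N$ --- the same computation that drives Lemma \ref{DisjointCosets}, via Lemma \ref{RepsQSubs} and $N$-bimodularity of the conditional expectation. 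Both arguments ultimately rest on the single fact $E^R_N(x)=0$, but your constructive version buys something real: surjectivity is immediate, with no need to argue that the image of the closed subspace $J^1(N)\cdot x$ under the quotient map $T_N$ is closed (the step the paper compresses into ``$T_N$ is open \ldots and hence closed''). It also exhibits a bounded linear right inverse $\psi\mapsto\psi\circ E^R_N$ of $T_N|_{J^1(N)\cdot x}$, which is slightly more information than the statement requires. The one point to be careful about is that bimodularity is applied only on the right by an element $n\in N$, which is exactly what the hypothesis provides; your invocation of Tomiyama is harmless but unnecessary, since $E^R_N$ is a conditional expectation by definition of a compact quasi-subgroup.
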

\begin{proof}
    For each $y\in N$, using $xN\cap N = \{0\}$ from Lemma \ref{DisjointCosets}, find $f\in J^1(N)\cdot x$ so that $f(y)\neq 0$. Then $T_N(f)(y) = f(y)\neq 0$, from which, using a straightforward Hahn--Banach argument and that $T_N$ is open (open mapping theorem) and hence closed, we see that $T_N(J^1(N)\cdot x) = N_*$ as desired.
\end{proof}
The following theorem is the statement that $\G$ is coamenable if and only if the preannihilator of an invariant quantum coset has a bai.
\begin{thm}\label{CoamenIdeal}
    Let $\G$ be a CQG and $X$ a weak$^*$ closed invariant subspace of $L^\infty(\G)$. Suppose $\{sX : s\in Gr(\G)\}$ has a compact quasi--subgroup and at least two elements. Then $\G$ is coamenable if and only if $X_\perp$ has a bai.
\end{thm}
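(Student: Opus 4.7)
The idea is to transfer the problem, via the $Gr(\G)$-action on ideals, from the ideal $X_\perp$ to the ideal $J^1(N)$ of a compact quasi-subgroup, and then apply Proposition \ref{G/HCoamen}. First I would unpack the hypotheses: the first condition gives $s_0 \in Gr(\G)$ with $N := s_0 X$ a compact quasi-subgroup, so $X = s_0^{-1} N$; the two-element condition, together with Lemma \ref{DisjointCosets} (which gives $yN = N \iff y \in N$ for $y \in Gr(\G)$), yields some $x \in Gr(\G) \setminus N$. By Lemma \ref{Ideals}, $X_\perp = J^1(N)\cdot s_0^{-1}$, and a direct computation using $\Delta_\G(s_0)=s_0\otimes s_0$ shows that $\cdot s_0^{-1}$ is an isometric algebra automorphism of $L^1(\G)$, so $X_\perp$ has a bai if and only if $J^1(N)$ does. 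Moreover $N$ inherits invariance from $X$ (since translation by an element of $Gr(\G)$ preserves both left and right invariance), hence $J^1(N)$ is a two-sided ideal and $L^1(\G)/J^1(N) \cong N_*$ is a Banach algebra.

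For the forward direction, assume $\G$ is coamenable. Then $L^1(\G)$ has a bai $(e_\alpha)$, and since $T_N : L^1(\G) \to N_*$ is a continuous algebra surjection, $(T_N(e_\alpha))$ is a bai for $N_*$. Proposition \ref{G/HCoamen}, applied to the two-sided ideal $J^1(N)$, then produces a bai for $J^1(N)$, and hence (via the automorphism) for $X_\perp$.

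For the reverse direction, suppose $X_\perp$ has a bai, so $J^1(N)$ has a bai $(f_\alpha)$. Applying the automorphism $\cdot x$ and Lemma \ref{Ideals} again, $(f_\alpha\cdot x)$ is a bai for $J^1(N)\cdot x = (xN)_\perp$. Here the coset hypothesis pays off: by Lemma \ref{SurjectIdeals}, since $x\in Gr(\G)\setminus N$, the restriction $T_N|_{J^1(N)\cdot x} : J^1(N)\cdot x \to N_*$ is a surjective algebra homomorphism, so $(T_N(f_\alpha \cdot x))$ is a bai for $N_*$. Now $L^1(\G)/J^1(N) = N_*$ has a bai and $J^1(N)$ has a bai, so Proposition \ref{G/HCoamen} forces $\G$ to be coamenable.

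The main point, and what differentiates this theorem from Corollary \ref{CoamenBAIIdeal}, is manufacturing a bai for the quotient $N_*$ in the reverse direction without assuming $\omega_N \in M^r(\G)$. The existence of a genuine quantum coset $xN \neq N$ is exactly what Lemma \ref{SurjectIdeals} converts into the surjectivity $T_N(J^1(N)\cdot x) = N_*$, and this is precisely the substitute for Forrest's observation in the classical case that $I(\Lambda)$ and $I(s\Lambda)$ have bais simultaneously, with the latter accessible to direct approximation arguments.
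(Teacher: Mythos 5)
Your proof is correct and follows essentially the same route as the paper: transfer the bai from $X_\perp$ to $J^1(N)$ and to the coset ideal $J^1(N)\cdot x$ for some $x\in Gr(\G)\setminus N$, push it through $T_N$ via Lemma \ref{SurjectIdeals} to obtain a bai on $N_*$, and invoke Proposition \ref{G/HCoamen}. The only cosmetic difference is in the easy direction, where the paper constructs the right $L^1(\G)$--module projection onto $X$ directly from the idempotent functional $\omega_N\cdot x_0^{-1}$ rather than routing through $N_*$ and Proposition \ref{G/HCoamen}; both ultimately rest on the same invariant-complementation fact.
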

\begin{proof}
    Let $N\in \{sX : s\in Gr(\G)\}$ denote the compact quasi--subgroup. As discussed in Remark \ref{cosetsQSubs}, we know $N = x_0X$ for some $x_0\in Gr(\G)$ and from Lemma \ref{Ideals}, $J^1(N) = X_\perp\cdot x_0$ is a two--sided ideal (and so $N$ is actually an invariant compact quasi--subgroup).
    
    The proof is a generalization of the argument employed by Forrest \cite{For}. Suppose $X_\perp$ has a bai. Now, for $f\in X_\perp$, $y\in L^\infty(\G)$, and $x\in Gr(\G)$
    \begin{align*}
        ||(e_j\cdot x)*(f\cdot x) - f\cdot x||_1 &= \sup_{y\in B_1(L^\infty(\G))}| (e_j\otimes f)\Delta_\G(xy) - f(xy)|
        \\
        &= \sup_{y\in x^{-1}B_1(L^\infty(\G))} |(e_j\otimes f)\Delta_\G(y) - f(y)|
        \\
        &= \sup_{y\in B_1(L^\infty(\G))}|e_j*f(y) - f(y)|
        \\
        &= ||e_j*f - f||_1\to 0
    \end{align*}
    where in the second last equality, we used the fact $x$ is a unitary. A similar proof shows $f*(e_j\cdot x)\to f$, so $e_j\cdot x$ is a bai on $X_\perp\cdot x$. Now, we know $X\cdot Gr(\G)$ has two elements, one of which is $N$. Without loss of generalization, we will suppose the other element is $X$. So, we have that $J^1(N)$ and $X_\perp = J^1(N)\cdot x_0^{-1}$ both have bais. Then from invariance of $N$, we know $T_N$ is an algebraic homomorphism and coupling this fact with Lemma \ref{SurjectIdeals} finds us a bai on $N_* = T_N(J^1(N)\cdot x_0^{-1})$. Then we apply Proposition \ref{G/HCoamen}.
    
    For the converse, from the discussion in Remark \ref{cosetsQSubs}, we have a right $L^1(\G)$--module projection $L^\infty(\G)\to X$ induced by the idempotent functional $\omega_N\cdot x_0^{-1}$. The rest is identical to the proof of the converse of Proposition \ref{G/HCoamen}.
\end{proof}
With our last corollary, we drop the invariance condition of $N$, but we are forced to put back the condition that $\omega_N\in M^r(\G)$. What is distinct from before is that we incorporate quantum cosets of $N$.
\begin{cor}\label{CoamenIdealCor}
    Let $\G$ be a CQG and $X$ a weak$^*$ closed right invariant subspace of $L^\infty(\G)$. Suppose $\{sX : s\in Gr(\G)\}$ has a compact quasi-subgroup $N$ such that $\omega_N\in M^r(\G)$ and at least two elements. Then $\G$ is coamenable if and only if $X_\perp$ has a brai.
\end{cor}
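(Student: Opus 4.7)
The plan is to reduce Corollary~\ref{CoamenIdealCor} to Corollary~\ref{CoamenBAIIdeal} via the translation-by-$Gr(\G)$ trick used in the proof of Theorem~\ref{CoamenIdeal}. Since $N$ lies in the orbit $\{sX : s\in Gr(\G)\}$, choose $x_0\in Gr(\G)$ with $N = x_0 X$. Applying Lemma~\ref{Ideals} to the right invariant subspace $X$ yields
\[
J^1(N) = (x_0 X)_\perp = X_\perp \cdot x_0,
\]
and consequently $X_\perp = J^1(N)\cdot x_0^{-1}$, since the dot action is associative.

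Next, I would check that this translation preserves the existence of brais. Suppose $(g_j)\subseteq J^1(N)$ is a brai, and put $e_j = g_j\cdot x_0^{-1}\in X_\perp$, noting that $\|e_j\|_1 = \|g_j\|_1$ since $x_0^{-1}$ is unitary. For $f\in X_\perp$, write $f = g\cdot x_0^{-1}$ with $g\in J^1(N)$. Using $\Delta_\G(x_0^{-1}) = x_0^{-1}\otimes x_0^{-1}$ (as $x_0\in Gr(\G)$), a direct computation parallel to the one in the proof of Theorem~\ref{CoamenIdeal} gives
\[
(g\cdot x_0^{-1}) * (g_j\cdot x_0^{-1}) = (g * g_j)\cdot x_0^{-1},
\]
and hence $\|f * e_j - f\|_1 = \|(g*g_j - g)\cdot x_0^{-1}\|_1 = \|g*g_j - g\|_1 \to 0$. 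The reverse direction, transferring a brai in $X_\perp$ to one in $J^1(N) = X_\perp\cdot x_0$, is identical with $x_0^{-1}$ replaced by $x_0$.

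Finally, because $\omega_N \in M^r(\G)$ by hypothesis, Corollary~\ref{CoamenBAIIdeal} implies $J^1(N)$ has a brai if and only if $\G$ is coamenable. Chaining this with the equivalence obtained above yields that $X_\perp$ has a brai if and only if $\G$ is coamenable.

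The only step requiring care is the convolution identity $(g\cdot x_0^{-1}) * (g_j\cdot x_0^{-1}) = (g*g_j)\cdot x_0^{-1}$. Since $x_0$ is grouplike, the factorization $\Delta_\G(x_0^{-1} y) = (x_0^{-1}\otimes x_0^{-1})\Delta_\G(y)$ reduces this to a direct rewrite on $L^\infty(\G)$, so no serious obstacle arises beyond what was already handled in the proof of Theorem~\ref{CoamenIdeal}.
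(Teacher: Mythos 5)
Your proof is correct and follows essentially the same route as the paper: translate the brai from $X_\perp$ to $J^1(N) = X_\perp\cdot x_0$ using the isometric algebra automorphism $\cdot\,x_0$ (the same computation as in the proof of Theorem~\ref{CoamenIdeal}), then invoke Corollary~\ref{CoamenBAIIdeal} via the hypothesis $\omega_N\in M^r(\G)$. The only cosmetic difference is in the converse, where the paper produces a brai on $X_\perp$ from the right $L^1(\G)$-module projection onto $X$ induced by the contractive idempotent together with Forrest's complementation criterion, whereas you simply run the translation argument in reverse from the brai on $J^1(N)$; both are valid.
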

\begin{proof}
    Let $N\in \{sX : s\in Gr(\G)\}$ be the given compact quasi--subgroup. In the proof of Theorem \ref{CoamenIdeal}, we showed $J^1(N)$ also has a brai. Then from Corollary \ref{CoamenBAIIdeal} we know $\G$ is coamenable. The proof of the converse is identical to the proof of the converse in Theorem \ref{CoamenIdeal}.
\end{proof}
\begin{rem}
    For the general definition of an open quantum subgroup, we direct the reader to \cite{Kal1} (and \cite{Daws} for the general notion of a closed quantum subgroup), which includes the notion of a quantum subgroup of a discrete quantum group. Let $\G$ be a CQG. Whenever $\widehat{\H}$ is a closed quantum subgroup of $\widehat{\G}$, $L^\infty(\H)\subseteq L^\infty(\G)$ is an invariant coideal and every invariant coideal is of the form $L^\infty(\H)$ where $\widehat{\H}$ is a quantum subgroup of $\widehat{\G}$. In particular, it follows that the hypothesis of Theorem \ref{CoamenIdeal} is satisfied whenever $\widehat{\H}$ is a quantum subgroup of $\widehat{\G}$ such that $Gr(\G)\setminus Gr(\H)$ is non-trivial.
\end{rem}
\subsection{Examples: Discrete Crossed Products}
The main results of Section $4.2$ are applicable only to CQGs where the intrinsic group is non-trivial, and avoids the compact quasi-subgroups in question. Unfortunately, it can be the case that the intrinsic group is trivial, which happens exactly when the trivial representation is the only $1$-dimensional representation (eg. $SU_q(2)$). In this subsection we present a class of examples that do have a non-trivial intrinsic group, and are examples for which the results of Section $4.2$ apply. These examples come from CQGs that are crossed products of CQGs by discrete groups.

With what follows, we use \cite[Chapter X]{Tak03} and \cite{Dav} as a reference.
\begin{defn}
    A {\bf discrete $W^*$-dynamical system} is a triple $(M, \Gamma,\alpha)$ where $M$ is a von Neumann algbera, $\Gamma$ is a discrete group, and $\alpha : \Gamma\to Aut(M)$ is a weak$^*$ continuous homomorphism.
\end{defn}
Given a discrete $W^*$--dynamical system, we denote the finitely supported $M$--valued functions on $\Gamma$
$$M[\Gamma] = \Span\{ as : a\in M, s\in \Gamma\}.$$
We view the symbols $a\in M$ and $s\in \Gamma$ as the elements $a = ae$ and $1s = s$ in $M[\Gamma]$, which we assert to satisfy $sas^{-1} = \alpha(s)(a)$ for all $s\in \Gamma$ and $a\in M$, and has the following $*$--algebraic structure
$$(as)(bt) = a\alpha(s)(b)s^{-1}t ~ \text{and} ~ (as)^* = s^{-1}a^*$$
for $t\in \Gamma$ and $b\in M$ (note that we only needed $M$ to be a unital $*$--algebra). In other words, $M[\Gamma]$ is a $*$--algebra that contains a copy of $M$ and a copy of $\Gamma$ as unitaries such that $\alpha$ is inner.

A {\bf covariant representation} of $(M,\Gamma,\alpha)$ is a pair $(\pi_M,\pi_\Gamma)$ such that $\pi_M : M\to \fB(\fH)$ is a normal $*$-representation and $\pi : \Gamma\to \fB(\fH)$ a unitary representation satisfying the covariance equation
$$\pi_M(\alpha(s)(a)) = \pi_\Gamma(s)\pi_M(a)\pi_\Gamma(s)^*.$$
A covariant representation gives rise to a representation $\pi_M\rtimes_\alpha\pi_\Gamma : M[\Gamma]\to \fB(\fH)$ by setting $\pi_M\rtimes_\alpha\pi_\Gamma(as) = \pi_M(a)\pi(s)$. If we let $\theta : M\to \fB(\fH_\theta)$ be a faithful $*$-representation, then we can define a canonical covariant representation $(\pi^\theta,\lambda^\theta)$ of $(M,\Gamma,\alpha)$ by defining
$$\pi^\theta : M\to \fB(L^2(\Gamma,\fH_\theta)), ~ \pi^\theta(a)\xi(s) = \theta(\alpha(s^{-1})(a))\xi(s)$$
and
$$\lambda^\theta : \Gamma\to \fB(L^2(\Gamma,\fH_M)), ~ \lambda^\theta(t)\xi(s) = \xi(t^{-1}s)$$
for $a\in M$, $s,t\in \Gamma$, and $\xi\in L^2(\Gamma,\fH_\theta)$. Then
$$M\overline{\rtimes}_\alpha \Gamma := (\pi^\theta(M)\lambda^\theta(\Gamma))''$$
is the {\bf discrete von Neumann crossed product} of $(M, \Gamma,\alpha)$. We note that $M\overline{\rtimes}_\alpha \Gamma$ contains an isometric copy of $M$ and $VN(\Gamma)$ and we will abuse notation and denote the copy of each $x\in M$ and $s\in \Gamma$ by $x$ and $s$ respectively.

If $\G$ is a CQG and $\alpha$ satisfies $(\alpha\otimes\alpha)\Delta_\G  = \Delta_\G\circ\alpha$, $L^\infty(\G)\overline{\rtimes}_\alpha\Gamma$ is the von Neumann algebra of a CQG. We will call $(L^\infty(\G),\Gamma,\alpha)$ a {\bf Woronowicz $W^*$-dynamical system}.
\begin{thm}\label{CrossedProds}\cite{Wang1, Wang2, BV05}
    Let $(L^\infty(\G),\Gamma,\alpha)$ be a Woronowicz $W^*$-dynamical system. Then there exists a CQG (denoted $\G\rtimes \widehat{\Gamma}$) such that:
    \begin{enumerate}
        \item $Irr(\G\rtimes_\alpha\widehat{\Gamma}) =\{su_{i,j}^\pi : \pi\in Irr(\G), ~ s\in \Gamma\}$;
        \item $\Pol(\G)[\Gamma] = \Pol(\G\rtimes_\alpha \widehat{\Gamma})$;
        \item $L^\infty(\G)\overline{\rtimes}_\alpha \Gamma = L^\infty(\G\rtimes_\alpha\widehat{\Gamma})$;
        \item $h_{\G\rtimes_\alpha\widehat{\Gamma}} = h_\G\rtimes_\alpha 1_{\{e\}}$;
        \item $\Delta_{\G\rtimes_\alpha\widehat{\Gamma}}|_{L^\infty(\G)} = \Delta_\G$ and $\Delta_{\G\rtimes_\alpha\widehat{\Gamma}}|_{VN(\Gamma)} = \Delta_{\widehat{\Gamma}}$.
    \end{enumerate}
\end{thm}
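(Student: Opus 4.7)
The plan is to build all required CQG structure on the algebraic core $\Pol(\G)[\Gamma]$ first, verify the Hopf $*$-algebra axioms using the compatibility hypothesis $(\alpha\otimes\alpha)\Delta_\G = \Delta_\G\circ\alpha$, and then promote everything to the von Neumann and $C^*$-algebraic completions by appealing to Woronowicz's reconstruction theorem together with standard crossed product functoriality. Item (5) will then be automatic from the definition, (4) the choice of Haar state, and (1)--(3) will fall out from Peter--Weyl applied to the resulting CQG.

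First I would define the coproduct on basis elements by
$$\Delta(xs) := \Delta_\G(x)(s\otimes s),\qquad x\in\Pol(\G),\ s\in\Gamma,$$
which manifestly restricts to $\Delta_\G$ on $\Pol(\G)$ and to the cocommutative $\Delta_{\widehat\Gamma}$ on $\C[\Gamma]$. The non-trivial check is multiplicativity on a product $(xs)(yt) = x\alpha(s)(y)(st)$. Writing $\Delta_\G(y) = y_{(1)}\otimes y_{(2)}$,
\begin{align*}
\Delta(xs)\Delta(yt) &= \Delta_\G(x)(s\otimes s)\Delta_\G(y)(t\otimes t)\\
&= \Delta_\G(x)\bigl(sy_{(1)}s^{-1}\otimes sy_{(2)}s^{-1}\bigr)(st\otimes st)\\
&= \Delta_\G(x)(\alpha(s)\otimes\alpha(s))\Delta_\G(y)(st\otimes st)\\
&= \Delta_\G(x\alpha(s)(y))(st\otimes st) = \Delta((xs)(yt)),
\end{align*}
where the compatibility hypothesis is used in the third equality. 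Coassociativity then follows because $\Delta$ is multiplicative and coassociative on each of the generating subalgebras $\Pol(\G)$ and $\C[\Gamma]$.

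Next I would set $h(xs) := h_\G(x)\delta_{s,e}$ for $x\in\Pol(\G),s\in\Gamma$. Left and right invariance are one-line computations using $(h_\G\otimes\id)\Delta_\G = h_\G(\cdot)1$, so $h$ is the Haar state. To extract the irreducible corepresentations, observe
$$\Delta(su^\pi_{i,j}) = (s\otimes s)\sum_{k=1}^{n_\pi} u^\pi_{i,k}\otimes u^\pi_{k,j} = \sum_{k=1}^{n_\pi} su^\pi_{i,k}\otimes su^\pi_{k,j},$$
so $sU^\pi := [su^\pi_{i,j}]$ is an $n_\pi$-dimensional corepresentation; unitarity is inherited from unitarity of $U^\pi$ and of $s$. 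Irreducibility and pairwise inequivalence of the family $\{sU^\pi\}_{s\in\Gamma,\pi\in Irr(\G)}$ follow from Schur orthogonality against $h$, combining Peter--Weyl orthogonality of $\{u^\pi_{i,j}\}$ with orthogonality of $\{\lambda_\Gamma(s)\}_{s\in\Gamma}$; spanning is immediate from the definition of $\Pol(\G)[\Gamma]$, giving (1) and (2).

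The main obstacle is the passage to the von Neumann level, namely identifying the weak$^*$ closure in the GNS representation of $h$ with $L^\infty(\G)\overline{\rtimes}_\alpha\Gamma$ and showing $\Delta$ extends to a normal unital $*$-homomorphism into the completed tensor product. The cleanest route is to factor the GNS Hilbert space of $h$ as $L^2(\G)\otimes\ell^2(\Gamma)$, which is possible because $h = h_\G\otimes\tau$ with $\tau(s)=\delta_{s,e}$, and to observe that the GNS representation of $\Pol(\G)[\Gamma]$ then intertwines with the canonical covariant representation $\pi^{\theta}\rtimes\lambda^{\theta}$, where $\theta$ is the GNS of $h_\G$. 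Once this identification is in place, normality of the extended $\Delta$ and the bicommutant statement (3) follow from standard $W^*$-crossed product theory, and Woronowicz's characterization of CQGs handles the remaining $C^*$-algebraic completions needed to deduce (1). The Hopf-algebraic part is essentially routine; it is the $W^*$-crossed product identification that does the real work.
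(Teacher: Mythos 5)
This theorem is not proved in the paper at all: it is imported verbatim from the works cited in its header, and no argument accompanies it. So there is no in-paper proof to compare against; your attempt has to be judged as a reconstruction of the standard construction from those references, and as such it is essentially correct. Defining $\Delta(xs)=\Delta_\G(x)(s\otimes s)$ on $\Pol(\G)[\Gamma]$, checking multiplicativity via $(\alpha(s)\otimes\alpha(s))\Delta_\G=\Delta_\G\circ\alpha(s)$, taking $h=h_\G\circ E$ with $E$ the canonical expectation, and reading off the irreducibles as $sU^\pi=(s\otimes 1)U^\pi$ is exactly how items (1), (2), (4), (5) are obtained in the literature; your Schur-orthogonality computation $h\bigl((su^\pi_{i,j})^*(tu^\rho_{k,l})\bigr)=\delta_{s,t}\,h_\G(u^{\pi*}_{i,j}u^\rho_{k,l})$ does give pairwise inequivalence, and irreducibility of $sU^\pi$ follows even more directly since any intertwiner of $sU^\pi$ is an intertwiner of $U^\pi$ after cancelling the unitary $s$. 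Two points you gloss over deserve a sentence each in a full write-up: first, to invoke Woronowicz's theory you must either exhibit the counit $\epsilon(xs)=\epsilon_\G(x)$ and antipode $S(xs)=s^{-1}S_\G(x)$ (both of which require the easy observations $\epsilon_\G\circ\alpha(s)=\epsilon_\G$ and $S_\G\circ\alpha(s)=\alpha(s)\circ S_\G$, consequences of the compatibility hypothesis and uniqueness of counit/antipode) or verify the cancellation densities directly; second, the GNS representation of $h$ on $L^2(\G)\otimes\ell^2(\Gamma)$ is only unitarily equivalent to the canonical covariant representation $\pi^\theta\rtimes\lambda^\theta$ --- the intertwining unitary is $(V\xi)(s)=\theta(\alpha(s^{-1}))\xi(s)$ --- and the $\alpha$-invariance of $h_\G$ (which follows from uniqueness of the Haar state) is what makes the Hilbert space factor in the first place. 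Neither point is a gap in substance, and your identification of the $W^*$-level bookkeeping as the real work is accurate.
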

\begin{rem}
\begin{enumerate}
    \item We see from $5.$ that every right invariant subspace of $L^\infty(\G)$ and $VN(\Gamma)$ is also a right invariant subspace of $L^\infty(\G\rtimes_\alpha\widehat{\Gamma})$.
    \item We obtain many examples from the combination of any CQG $\G$ and discrete group $\Gamma$. Consider the trivial action $\id : \Gamma\to Aut(L^\infty(\G))$, which is defined by $\id(s)(x) = x$ for all $s\in\Gamma$ and $x\in L^\infty(\G)$. In this case, we get $L^\infty(\G\rtimes_{id}\widehat{\Gamma}) \cong L^\infty(\G)\overline{\otimes}VN(\Gamma)$ as von Neumann algebras.
\end{enumerate}
\end{rem}\label{DCPs}
    Our main application of the results from Section $4.2$ lies in the following.
    \begin{prop}\label{DisCrosProdEgs}
        Let $(L^\infty(\G),\Gamma,\alpha)$ be a Woronowicz $W^*$-dynamical system. Then the following hold:
        \begin{enumerate}
            \item if we assume $Gr(\G) \neq \{1\}$, $\G\rtimes\Gamma$ is coamenable if and only if $J^1(VN(\Gamma))\cdot x$ has a bai, where $x\in Gr(\G)$;
            \item $\G\rtimes\Gamma$ is coamenable if and only if any $J^1(N)\cdot s$ has a bai, where $s\in \Gamma$ and $Ns = VN(\Lambda s)$ for some proper subgroup $\Lambda$ of $\Gamma$ or $Ns$ is an (invariant) quantum coset of $L^\infty(\G)$.
        \end{enumerate}
    \end{prop}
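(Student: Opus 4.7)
The plan is to apply Theorem \ref{CoamenIdeal} in each case by identifying a weak$^*$ closed invariant subspace of $L^\infty(\G \rtimes_\alpha \widehat{\Gamma})$ whose $Gr(\G \rtimes_\alpha \widehat{\Gamma})$-orbit contains an invariant compact quasi-subgroup and has at least two elements. Theorem \ref{CrossedProds}(5) gives $\Delta_{\G \rtimes_\alpha \widehat{\Gamma}}|_{L^\infty(\G)} = \Delta_\G$ and $\Delta_{\G \rtimes_\alpha \widehat{\Gamma}}|_{VN(\Gamma)} = \Delta_{\widehat{\Gamma}}$, so both $L^\infty(\G)$ and $VN(\Gamma)$ sit inside as invariant coideals; consequently $Gr(\G) \cup \Gamma \subseteq Gr(\G \rtimes_\alpha \widehat{\Gamma})$.

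For (1), I would take $X = VN(\Gamma)$. The canonical slice $h_\G \otimes \id$ coming from the crossed product presentation is a normal right $L^1(\G \rtimes_\alpha \widehat{\Gamma})$-module conditional expectation onto $VN(\Gamma)$, making it an invariant compact quasi-subgroup. For $x \in Gr(\G) \setminus \{1\}$, we have $x \notin VN(\Gamma)$ because $L^\infty(\G) \cap VN(\Gamma) = \mathbb{C} \cdot 1$ inside the crossed product, so Lemma \ref{DisjointCosets} yields $x \cdot VN(\Gamma) \neq VN(\Gamma)$, giving the required second orbit element. Theorem \ref{CoamenIdeal} then asserts that $\G \rtimes_\alpha \widehat{\Gamma}$ is coamenable if and only if $J^1(VN(\Gamma))$ has a bai; using Lemma \ref{Ideals} and the fact that the right action by $x \in Gr(\G)$ is an isometric algebra automorphism of $L^1(\G \rtimes_\alpha \widehat{\Gamma})$, this is equivalent to $J^1(VN(\Gamma)) \cdot x$ having a bai.

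For (2) both sub-cases follow the same recipe. In sub-case (a), with $N = VN(\Lambda)$ for a proper subgroup $\Lambda$ of $\Gamma$, $VN(\Lambda)$ is an invariant compact quasi-subgroup of $L^\infty(\G \rtimes_\alpha \widehat{\Gamma})$ via composition of $h_\G \otimes \id$ with the standard conditional expectation $VN(\Gamma) \to VN(\Lambda)$; for $s \in \Gamma \setminus \Lambda$ we have $s \notin VN(\Lambda)$, so Lemma \ref{DisjointCosets} supplies the second orbit element. In sub-case (b), where the quantum coset lives in $L^\infty(\G)$, $N$ itself is an invariant compact quasi-subgroup of $L^\infty(\G)$ and this structure transfers to $L^\infty(\G \rtimes_\alpha \widehat{\Gamma})$ because the coproduct restricts to $\Delta_\G$ on $L^\infty(\G)$ and the conditional expectation onto $N$ composes naturally with $h_\G \otimes \id$; with $s \in Gr(\G)$ satisfying $sN \neq N$, Theorem \ref{CoamenIdeal} applies to give the equivalence expressed via $J^1(N) \cdot s$.

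The main obstacle is verifying that each candidate conditional expectation used above is a genuinely normal right $L^1(\G \rtimes_\alpha \widehat{\Gamma})$-module map, rather than merely a right module map for $L^1(\G) \otimes L^1(\widehat{\Gamma})$ acting on factors. This amounts to checking the module identity on the spanning set $\Pol(\G)[\Gamma]$, exploiting the explicit form of $\Delta_{\G \rtimes_\alpha \widehat{\Gamma}}$ on mixed elements $as$ coming from Theorem \ref{CrossedProds}; the remaining steps, namely identifying a second orbit element via Lemma \ref{DisjointCosets} and invoking Theorem \ref{CoamenIdeal}, are then routine.
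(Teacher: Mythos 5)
Your proposal is correct and follows essentially the same route as the paper: the paper's own proof simply records that $\Gamma, Gr(\G)\subseteq Gr(\G\rtimes\widehat{\Gamma})$ with $Gr(\G)\cap\Gamma=\{1\}$, picks an intrinsic-group element lying outside the relevant quasi-subgroup, and invokes Theorem \ref{CoamenIdeal}; you do the same, merely supplying the verifications (that $VN(\Gamma)$, $VN(\Lambda)$ and $L^\infty(\G)$ are invariant compact quasi-subgroups of the crossed product, and that the orbit has a second element) which the paper leaves implicit via Theorem \ref{CrossedProds}(5). The only caveat is in sub-case (b) of part (2): the statement (and the paper's proof, which chooses $x\in\Gamma\subseteq Gr(\G\rtimes\widehat{\Gamma})\setminus Gr(\G)$) intends $N=L^\infty(\G)$ translated by $s\in\Gamma$, whereas you translate a quasi-subgroup $N\subseteq L^\infty(\G)$ by $s\in Gr(\G)$; your argument is a valid application of Theorem \ref{CoamenIdeal} but proves a slightly different instance than the one asserted.
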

    \begin{proof}
        First note that $\Gamma, Gr(\G)\subseteq Gr(\G\rtimes\Gamma)$ and $Gr(\G)\cap \Gamma = \{1\}$. For $1$, because $Gr(\G)\neq \{1\}$, we can find $x\in Gr(\G)\subseteq Gr(\G\rtimes\Gamma)\setminus \Gamma$ and we apply Theorem \ref{CoamenIdeal}.\\
        \\
        Likewise, for $2$, we can find $x\in\Gamma\subseteq Gr(\G\rtimes\Gamma)\setminus Gr(\G)$ or non--trivial $x\in \Lambda\setminus\Gamma\subseteq Gr(\G\rtimes\Gamma)$, and then we apply Theorem \ref{CoamenIdeal}.
    \end{proof}
\section{Open Problems}
We will present problems left over from our investigations.

We have characterized the CQGs where every hull is a set of synthesis (Theorem \ref{StrcClosedLeftIdeals}) as the CQGs with property left $D_\infty$. This means the closed left ideals (and consequently the weak$^*$ closed right invariant subspaces of $L^\infty(\G)$) are classified for the CQGs satisfying property left $D_\infty$. This leaves us with the following very open ended question.
\begin{ques}
    Which hulls of a CQG are always sets of synthesis?
\end{ques}
For example, the closed subgroups of a locally compact groups are always sets of synthesis (cf. \cite{KL19}). So, we ask the following more specific question.
\begin{ques}
    Are the hulls of right coideals sets of synthesis?
\end{ques}
We have made partial progress towards identifying when the left ideals $J^1(N)$ associated with a compact quasi--subgroup admit a brai. While we have a complete characterization in terms of the condition $J^u(N) = \overline{J^1(N)}^{wk*}$ (Theorem \ref{Wk*IdealsBrais}), our characterization in terms of coamenability of $\G$ (Corollary \ref{CoamenBAIIdeal}) requires what is essentially a coamenability type condition on $N$. This leaves us with the following question.
\begin{ques}
    Given a CQG and compact quasi--subgroup $N$, if $J^1(N)$ has a brai, then do we have $\omega_N\in M^r(\G)$?
\end{ques}
Successfully answering the above question means we can say $\G$ is coamenable if and only if $J^1(N)$ admits a brai.

We have also characterized coamenability of $\G$ in terms the existence of brais on the associated left ideals of a very small class of TROs associated with a contractive idempotent (Theorem \ref{CoamenIdeal} and Corollary \ref{CoamenIdealCor}). Namely, if we set $X = M^l_\omega(L^\infty(\G))$ where $\omega\in M^u(\G)$ is a contractive idempotent, we require $Gr(\G)\cap (L^\infty(\G)\setminus X)\neq \varnothing$ and one of two things: either $X$ is invariant or $\omega\in M^r(\G)$. Therefore we ask the following general question.
\begin{ques}
    Let $\G$ be a CQG and $\omega\in M^u(\G)$ a contractive idempotent. Do we have that $M^l_\omega(L^\infty(\G))_\perp$ admits a brai if and only if $\G$ is coamenable?
\end{ques}

\section*{Acknowledgments}
I thank Nico Spronk, Brian Forrest, and Michael Brannan for their supervision and support over the fruition of this project. I would like to thank Nico especially for his careful reading of this article and helpful comments. I would also like to thank Jared White for initiating the Groups, Operators, and Banach Algebras (GOBA) webinar in the Winter of 2020. The ideas that built this work were seeded from Jared's talk at the onset of the GOBA webinar. Finally, I thank the anonymous referee for their feedback which greatly improved the presentation of this paper. This work was supported by a QEII-GSST scholarship. 

{\small\bibliography{IdealsofCQGs}}
\bibliographystyle{amsplain}
\renewcommand*{\bibname}{References}

\end{document}